\documentclass[11pt,reqno]{amsart}

\usepackage{amsmath,amsthm,amssymb,amsfonts,enumerate,color,enumerate}
\newcommand{\mysection}[1]{\section{#1}
      \setcounter{equation}{0}}

\newcommand\cbrk{\text{$]$\kern-.15em$]$}} 
\newcommand\opar{\text{\raise.2ex\hbox{${\scriptstyle | }$}\kern-.34em$($} }
\newcommand\loc{\rm loc\,}
\newcommand\sca{\text{\sc a}}
\newcommand\scb{\text{\sc b}}
\newcommand\scc{\text{\sc c}}

\DeclareMathOperator{\sign}{sign}

\newtheorem{theorem}{Theorem}[section]
\newtheorem{lemma}[theorem]{Lemma}

\newtheorem{corollary}[theorem]{Corollary}

\theoremstyle{definition}
\newtheorem{assumption}{Assumption}[section]
\newtheorem{definition}{Definition}[section]
\newtheorem{example}{Example}[section]

\theoremstyle{remark}
\newtheorem{remark}{Remark}[section]

\newcommand\bG{\mathbb{G}}
\newcommand\bH{\mathbb{H}}

\newcommand\bK{\mathbb{K}}

\newcommand\bR{\mathbb{R}}

\newcommand\bB{\mathbb{B}}

\newcommand\bV{\mathbb{V}}

\newcommand\frf{\mathfrak{f}}

\newcommand\frh{\mathfrak{h}}

\newcommand\frp{\mathfrak{p}}

\newcommand\frJ{\mathfrak{J}}
\newcommand\frK{\mathfrak{K}}

\newcommand\cB{\mathcal{B}}

\newcommand\cF{\mathcal{F}}

\newcommand\cL{\mathcal{L}}
\newcommand\cM{\mathcal{M}}

\newcommand\cP{\mathcal{P}}

\newcommand\cS{\mathcal{S}}

\makeatletter
 \newcommand{\sumstar}%
 {\operatornamewithlimits{\sum@\kern-.2em\raise1ex\hbox{*}}}
 \makeatother

\def\Xint#1{\mathchoice
{\XXint\displaystyle\textstyle{#1}}%
{\XXint\textstyle\scriptstyle{#1}}%
{\XXint\scriptstyle\scriptscriptstyle{#1}}%
{\XXint\scriptscriptstyle\scriptscriptstyle{#1}}%
\!\int}
\def\XXint#1#2#3{{\setbox0=\hbox{$#1{#2#3}{\int}$ }
\vcenter{\hbox{$#2#3$ }}\kern-.6\wd0}}

\def\dashint{\Xint-}

\begin{document}
 
\title[Evolution equations with monotone operators]{Once again
on evolution equations with monotone operators
in Hilbert spaces and applications}

\author[I. Gy\"ongy]{Istv\'an Gy\"ongy}
\address{School of Mathematics and Maxwell Institute,
University of Edinburgh,
King's  Buildings,
Edinburgh, EH9 3JZ, United Kingdom}
\email{i.gyongy@ed.ac.uk}

\author[N.V. Krylov]{Nicolai V. Krylov}%
\thanks{}
\address{127 Vincent Hall, University of Minnesota,
Minneapolis,
       MN, 55455, USA}
\email{nkrylov@umn.edu}

\subjclass[2020] {60H15, 35R60}
\keywords{Cauchy problem,  stochastic evolution equations,
 stochastic parabolic PDEs, singular coefficients}

\begin{abstract} 
Existence, uniqueness and stability of the solutions 
of linear stochastic evolution equations are investigated. The results 
obtained are used to prove theorems on solvability of linear second order 
stochastic partial differential equations
in  $L_{p}$-setting with singular lower order coefficients. 
\end{abstract}

\dedicatory{In memory of Giuseppe Da Prato}
\maketitle
\mysection{Introduction}

In this paper we present an extension of the classical 
theory of stochastic evolutional 
equations in order to cover a large class of linear second order 
stochastic partial differential 
equations (SPDEs) with singular lower order coefficients. 

We use the variational approach to prove existence, 
uniqueness and regularity of the solutions to 
stochastic evolution equations formulated in the framework 
of Hilbert spaces $V, H$, where $V$ is continuously and densely embedded 
in $H$. 
A key role in this approach is played by apriori estimates provided 
by the help of an It\^o formula for $|v_t|^2_H$, for the  
square of the norm in $H$ of solutions to the stochastic 
evolution equation. Therefore, after a preliminary section,  
first we present a theorem on It\^o's formula, 
Theorem \ref{theorem 4.9.3}, which looks like a simple 
version of well-known It\^o formulas for the squared norm 
of semimartingales in Hilbert spaces. The difference is 
that instead of the square integrability of 
$|f_t|_H$ in $t$ we assume that it is only integrable over $[0,T]$, 
 where $f=(f_t)_{t\in[0,T]}$ is an $H$-valued component 
 of the free term, see \eqref{8.9.5}.  
 However, this seemingly unimportant 
improvement is essential in our applications to 
stochastic evolution equations with monotone operators 
in Sections 4, 5 and 6, where we prove theorems on 
existence, uniqueness and on stability of the solutions,  
see Theorems \ref{theorem 4.18.1}, \ref{theorem 4.20.1}  
and \ref{theorem 4.22.1}.
In Sections 7, 8 and 9 we apply these theorems 
to parabolic SPDEs 
on the whole state space $\bR^d$. Under the strong 
parabolicity condition, Assumption \ref{assumption 1}, 
we prove existence uniqueness and regularity results for 
$W^i_2$-solutions (for $i=0,1$, see Definition \ref{definition L2}) 
to SPDEs with singular coefficients, and estimate 
these solutions 
also in $L_p$-spaces.  

The theory of linear second order SPDEs is well-developed 
when the coefficients in the equations are bounded, or locally 
bounded and satisfy some growth conditions. 
  See, for example, \cite{AM_03}, \cite{GK_92},  
\cite{LR_15} and the references therein. 
Stochastic heat equations with white-noise drift and with distributional 
drifts are studied in \cite{AV_20} and in \cite{ABLM_24}. Well-posedness 
of stochastic partial differential inclusions 
with singular drift in divergence form is 
investigated in \cite{MS_18}. 
An exposition of the regular 
and singular stochastic Allen-Cahn equations is given in the book 
\cite{B_22}.  
Optimal rate of convergence estimates for finite difference approximations 
of stochastic heat equations with locally unbounded drifts are obtained in 
\cite{BDG_23}.  Strong convergence of discretisations with parabolic rate 1 
is established for stochastic Allen-Cahn-type 
equations in \cite{GS_24}. 

There are well-known classical results on the solvability 
of deterministic elliptic and parabolic PDEs with locally unbounded lower order 
coefficients, see the monograph \cite{LSU1967}. Recently essential 
progress has been achieved for elliptic and parabolic PDEs 
in reducing the summability conditions 
on these coefficients, see \cite{K2023} 
and the references therein. 

The present paper is influenced by \cite{K2022b}, which contains apriori 
$L_p$-estimates for the kind of SPDEs we are interested in this paper.   
As far as we know our theorems are the first results 
on the solvability of SPDEs with singular lower order coefficients. 
Our interest in these equations is partially motivated by the recent progress 
in the theory of stochastic differential equations (SDEs) with singular drifts, 
  see, e.g., \cite{G_24} and the references therein. 
In a continuation of the present paper we want to investigate stochastic filtering  
problems for such SDEs by the help of SPDEs with singular coefficients. 

 We  finish the introduction with the stipulation
that we use the plain symbol $N$  
for various constants
which may change in every new appearance
and, if we use them in a proof of a statement,
then they are supposed to depend only 
on those parameters that are listed in the statement
unless explicitely indicated otherwise, like
$N=N(...)$, which means that $N$ depends
only on what is inside the parantheses.
Sometimes we use $N$ with indices, like $N_{2}$,
to facilitate keeping track of these particular
constants, they stay the same within the proof
where they appear, but may be different
in different proofs.

\mysection{A resolvent operator $R_{\lambda}:H\to V$
in Hilbert space
setting}
                                      \label{section 4.23.1}

In this section we collect some facts
(probably well-known) proved in 
\cite{Kr_13}.
Let $V$ and $H$ be two Hilbert spaces with 
scalar products and norms $(\cdot,\cdot)_{V}$, $  | \cdot  | _{V}$
and $(\cdot,\cdot)_{H}$, $  | \cdot  | _{H}$, respectively. 
Assume that
$V\subset H$, $V$ is dense in $H$ (in the metric of $H$),
and $  | u  | _{H}\leq   | u  | _{V}$ for any $u\in V$.

The  norm in $V$ is obviously equivalent to
$$
\big(\lambda  | u  | ^{2}_{H}+  | u  | ^{2}_{V}\big)^{1/2},
$$
where $\lambda\geq0$ is any fixed number.  
Then take an $f\in H$ and observe that the linear functional
$ (f,u)_{H}$ is bounded as a linear functional on $V$.
By Riesz's representation theorem there exists
a unique $v=:R_{\lambda}f\in V$
such that
$$
 (f,u)_{H}=\lambda (v,u)_{H}
+(v,u)_{V}
\quad\forall u\in V.
$$
 
\begin{theorem}
                                          \label{theorem 4.13.1}
                                          
(i) The operator $R_{\lambda}$ is a symmetric as an operator
acting in $H$ into $H$ and as an operator acting in $V$
into $V$, and for any $f\in H,u\in V$
\begin{equation}
                                               \label{7.27.3}
( R_{\lambda}f,u)_{V}=((1-\lambda R_{\lambda})f,u )_{H};
\end{equation}

(ii) The norms of the operator $ \lambda R_{\lambda}$ as an operator from 
$H$ into $H$
 as well as an operator from $V$ into $V$
 are less than or equal to one;

(iii) If $f\in H$, $\lambda\geq0$, and $\lambda R_{\lambda}
f=f$, then $f=0$;

(iv) The set $ R_{\lambda}H$ is dense in 
$V$ in the metric of $V$;

(v) For any $f\in H $ we have
\begin{equation*}
                                                   \label{4.9.5}
\lim_{\lambda\to\infty}  | f-\lambda R_{\lambda}f  | _{H}=0;
\end{equation*}

(vi) For
$f\in V$ we have
\begin{equation*}
                                                   \label{4.9.3}
\lim_{\lambda\to\infty}  | f-\lambda R_{\lambda}f  | _{V}=0.
\end{equation*}
\end{theorem}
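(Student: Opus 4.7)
The plan is to derive all six conclusions from the single defining equation $(f,u)_H = \lambda(R_\lambda f,u)_H + (R_\lambda f,u)_V$ by making clever choices of the test function $u$. First, subtracting $\lambda(R_\lambda f,u)_H$ from both sides immediately yields the identity \eqref{7.27.3} in (i). Symmetry of $R_\lambda$ on $H$ follows by substituting $u = R_\lambda g$ to get $(f,R_\lambda g)_H = \lambda(R_\lambda f,R_\lambda g)_H + (R_\lambda f,R_\lambda g)_V$, whose right-hand side is manifestly symmetric in $f$ and $g$. Symmetry on $V$ then drops out by combining \eqref{7.27.3} with this and the symmetry of $(\cdot,\cdot)_H$.

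For (ii) on $H$, setting $u = R_\lambda f$ in the defining equation produces $(f,R_\lambda f)_H = \lambda|R_\lambda f|_H^2 + |R_\lambda f|_V^2$, and Cauchy--Schwarz on the left-hand side yields $\lambda|R_\lambda f|_H \leq |f|_H$. The bound on $V$ is the main obstacle. For $f \in V$, the element $(1-\lambda R_\lambda)f$ lies in $V$ (because $R_\lambda f\in V$), so plugging $u = (1-\lambda R_\lambda)f$ into \eqref{7.27.3} and multiplying by $\lambda$ produces the orthogonality relation $(\lambda R_\lambda f,\,(1-\lambda R_\lambda)f)_V = \lambda|(1-\lambda R_\lambda)f|_H^2 \geq 0$. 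Expanding $|f|_V^2 = |\lambda R_\lambda f + (1-\lambda R_\lambda)f|_V^2$ using this gives the Pythagorean-type identity
\[
|f|_V^2 = |\lambda R_\lambda f|_V^2 + |(1-\lambda R_\lambda)f|_V^2 + 2\lambda|(1-\lambda R_\lambda)f|_H^2,
\]
from which $|\lambda R_\lambda f|_V \leq |f|_V$ is immediate, along with the bonus estimate $\lambda|(1-\lambda R_\lambda)f|_H^2 \leq |f|_V^2$ that I will reuse below. Claim (iii) is direct: $\lambda R_\lambda f = f$ substituted back forces $(R_\lambda f,u)_V = 0$ for all $u \in V$, hence $R_\lambda f = 0$ and so $f = 0$. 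For (iv), if $u \in V$ is $V$-orthogonal to $R_\lambda H$, then \eqref{7.27.3} together with self-adjointness of $1-\lambda R_\lambda$ on $H$ gives $(1-\lambda R_\lambda)u = 0$ in $H$, and (iii) forces $u = 0$.

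The convergence parts (v) and (vi) rest on the resolvent identity $R_\lambda - R_\mu = (\mu-\lambda)R_\lambda R_\mu$, which I establish by setting $w := R_\lambda f - R_\mu f - (\mu-\lambda)R_\lambda R_\mu f$, computing $(w,u)_V$ via \eqref{7.27.3} to obtain $(w,u)_V + \lambda(w,u)_H = 0$, and then testing with $u = w$. This yields commutativity $R_\lambda R_\mu = R_\mu R_\lambda$. For (v), the bonus bound from (ii) gives $|(1-\lambda R_\lambda)f|_H \leq \lambda^{-1/2}|f|_V \to 0$ for $f \in V$, and density of $V$ in $H$ together with $|\lambda R_\lambda|_{H\to H} \leq 1$ extends convergence to all $f \in H$. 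For (vi), density of $R_\mu H$ in $V$ from (iv) reduces the claim to $f = R_\mu h$; commutativity then rewrites $\lambda R_\lambda f - f = R_\mu(\lambda R_\lambda h - h)$, which tends to $0$ in $V$ by (v) in $H$ and the continuity of $R_\mu\colon H \to V$, while $|\lambda R_\lambda|_{V\to V} \leq 1$ from (ii) controls the approximation error.
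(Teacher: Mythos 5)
The paper does not actually prove Theorem \ref{theorem 4.13.1}; it only states it and refers to \cite{Kr_13} for the proofs. Your argument is correct and self-contained: the rearrangement giving \eqref{7.27.3}, the substitutions $u=R_\lambda g$, $u=R_\lambda f$ and $u=(1-\lambda R_\lambda)f$ for (i)--(ii), the orthogonality argument for (iv), the resolvent identity $R_\lambda-R_\mu=(\mu-\lambda)R_\lambda R_\mu$ (verified by testing $w$ against itself), and the density/uniform-boundedness reduction for (v)--(vi) all check out, and this is essentially the standard route taken in \cite{Kr_13}. No gaps.
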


\begin{remark}
                               \label{remark 4.19.1}
Since $|((1-\lambda R_{\lambda})f,u )_{H}|
\leq 2  | f  | _{H}  | u  | _{H}\leq2  | f  | _{H}  | u  | _{V}$
equation \eqref{7.27.3} implies that
$  | R_{\lambda}f  | _{V}\leq 2  | f  | _{H}$ for any $f\in H$.
\end{remark}

\mysection{It\^o's formula for the squared norm}
                                       \label{section 7.17.2}

Let $(\Omega,\cF,P)$ be a complete probability space
and let $\{\cF_{t},t\geq0\}$ be an increasing filtration
of $\sigma$-fields $\cF_{t}\subset \cF$, which are complete
with respect to $\cF,P$.   Let $\cP$ denote the 
predictable $\sigma$-field. 

In order to avoid unimportant complications
we assume that $(V,(\cdot,\cdot)_{V})$ is 
a separable Hilbert space, which is the case
in many applications. Then $(H,(\cdot,\cdot)_{H})$
is also separable. It is convenient that
under this assumption
there is no difference between weak and strong measurability.

Assume that we are given $V $-valued
processes $v_{t},v^{*}_{t}$, $t>0$, and an $H$-valued
$f_{t}$, $t>0$, 
 which are predictable and satisfy
\begin{equation}
                                                   \label{8.9.5}
E\int_{0}^{T}  | v_{t},v^{*}_{t}  | ^{2}_{V }
\,dt+E\Big(\int_{0}^{T}  | f_{t}  |  _{H }
\,dt\Big)^{2}<\infty
\end{equation}
  for any $T\in(0,\infty)$ (observe $  | f_{t}  |  _{H }$
  and not $  | f_{t}  | ^{2} _{H }$). Also let  $m_{t}$, $t\geq0$,
be an $H $-valued
continuous martingale starting at the origin with
\begin{equation*}
                                                     \label{8.9.1}
d\langle m\rangle_{t}\leq 
dt.
\end{equation*}

The theory of integrating predictable 
Hilbert-space valued processes with respect to
 continuous same space-valued martingales is quite
parallel to that in case the Hilbert space is just $\bR^{d}$.
This theory implies that,
for any predictable $H$-valued
process $h_{\cdot}\in L_{2}([0,\infty),H)$
the stochastic integral
\begin{equation*}
                                               \label{6.18.6}
M_{t}:=\int_{0}^{t}(h_{s},dm_{s})_{H },
\end{equation*}
is well defined and  is a
continuous real-valued martingale with
$$
\langle M\rangle_{t}\leq \int_{0}^{t}  | h_{s}  | ^{2}_{H }\,ds.
$${
Suppose that $v_{0}$ is
an 
$H $-valued $\cF_{0}$-measurable random vector
 and $\gamma_{t}\geq0$ is a predictable process
such that    
$$
E\int_{0}^{\infty}\gamma^{2}_{t}\,dt<\infty.
$$
Set
$$
m^{\gamma}_{t}=\int_{0}^{t}\gamma_{s}\,dm_{s}.
$$

Finally, assume that for any $v \in V$
we have
\begin{equation}
                             \label{4.9.7}
( v ,v_{t})_{H }=( v ,v_{0})_{H }+\int_{0}^{t}
[( v ,v^{*}_{s})_{V }+( v ,f_{s})_{H}]\,ds+
 ( v  ,m^{\gamma}_{t})_{H }
\end{equation}
for almost all $(\omega,t)$.   Here 
and later on ``almost all $(\omega,t)$" means $P\otimes dt$-almost 
all $(\omega,t)$.}

The following theorem looks very much like
what one can find in the literature including
\cite{Kr_13} except that in the condition \eqref{8.9.5}
the last square is inside the integral.
This seemingly unimportant improvement will
show up later in the existence theorem for SPDEs
with quite singular first order term.

\begin{theorem}
                                  \label{theorem 4.9.3}
Under the above assumptions there exists a continuous
$H $-valued $\cF_{t}$-adapted process $u_{t}$
and a set $\Omega'\subset\Omega$ of full probability
such that 

(i) $u_{t}=v_{t}$ for almost all $(\omega,t)$, so that
\begin{equation}
                                   \label{6,21.2}
E\int_{0}^{T}  | u_{t}   | ^{2}_{V }
\,dt<\infty
\end{equation}
  for any $T\in(0,\infty)$,

(ii) for
all $\omega\in\Omega'$, all $ v \in V$,
and all $t\geq0$ we have
\begin{equation} 
                                                 \label{4.9.8}
( v ,u_{t})_{H }=( v ,v_{0})_{H }+\int_{0}^{t}
[( v ,v^{*}_{s})_{V }+( v ,f_{s})_{H}]\,ds
 +( v ,m^{\gamma}_{t})_{H },
\end{equation}

(iii) for
all $\omega\in\Omega'$ 
and all $t\geq0$ we have 
\begin{equation}
                                                 \label{4.9.9}
  | u_{t}  | ^{2}_{H }=  | v_{0}  | ^{2}_{H }+2\int_{0}^{t}
[(u_{s},v^{*}_{s})_{V }+(u_{s},f_{s})_{H}]\,ds
+\langle m^{\gamma}\rangle_{t}
+2 \int_{0}^{t}(\gamma_{s}v_{s},dm_{s})_{H }.
\end{equation}

\end{theorem}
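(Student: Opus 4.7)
The plan is to prove the theorem by resolvent regularization: use the resolvent $R_\lambda$ of Section~\ref{section 4.23.1} to smooth $v_t$ into the more regular process $u^\lambda_t := \lambda R_\lambda v_t$, derive a Hilbert-space Itô formula for $u^\lambda_t$, and pass to the limit $\lambda\to\infty$ exploiting the strong convergences $\lambda R_\lambda\to I$ recorded in Theorem~\ref{theorem 4.13.1}(v),(vi).

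First I would substitute $v=R_\lambda e$ (for arbitrary $e\in V$) into \eqref{4.9.7}, invoke identity \eqref{7.27.3} to rewrite $(R_\lambda e,v^*_s)_V = (e,v^*_s)_H - \lambda(e,R_\lambda v^*_s)_H$, and use self-adjointness of $R_\lambda$ on $H$. Density of $V$ in $H$ then yields the $H$-valued identity
\begin{equation*}
R_\lambda v_t = R_\lambda v_0 + \int_0^t\bigl[v^*_s-\lambda R_\lambda v^*_s + R_\lambda f_s\bigr]\,ds + R_\lambda m^\gamma_t\quad\text{a.e. }(\omega,t),
\end{equation*}
whose integrand is $V$-valued with $V$-norm bounded by $2|v^*_s|_V + 2|f_s|_H$ (via Theorem~\ref{theorem 4.13.1}(ii) and Remark~\ref{remark 4.19.1}), hence in $L^1([0,T];V)$ a.s. The right-hand side has a continuous $V$-valued version, which (after rescaling by $\lambda$) I adopt as the definition of $u^\lambda_t$. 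A standard Hilbert-space Itô formula for $|u^\lambda_t|_H^2$ --- either derived directly since $|\cdot|_H^2:V\to\bR$ is $C^2$ with bounded Hessian, or by expanding in an orthonormal basis of $V$ and summing real-valued Itô expansions --- produces an identity whose drift can be rewritten, after one further use of \eqref{7.27.3}, as $2\int_0^t(\lambda R_\lambda v^*_s, u^\lambda_s)_V\,ds + 2\int_0^t(\lambda R_\lambda f_s,u^\lambda_s)_H\,ds$, with stochastic integral $2\int_0^t((\lambda R_\lambda)^2 v_s,\gamma_s\,dm_s)_H$ and bracket $\langle\lambda R_\lambda m^\gamma\rangle^H_t$.

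By Theorem~\ref{theorem 4.13.1}(v),(vi), each factor $\lambda R_\lambda$ converges strongly on the appropriate space, giving pointwise convergence of every integrand to its target. Cauchy--Schwarz against \eqref{8.9.5} provides dominated convergence for the $v^*$-drift, and standard arguments handle the martingale and bracket terms. The delicate term --- and the whole point of the improvement over the literature --- is the $f$-drift $\int_0^t (\lambda R_\lambda f_s, u^\lambda_s)_H\,ds$: its integrand converges pointwise to $(v_s,f_s)_H$ but is only dominated by $|f_s|_H|v_s|_H$, whose $dt$-integral need not be finite under the weakened hypothesis $|f|_H\in L^1(dt)$ in \eqref{8.9.5}. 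I expect this to be the main obstacle; my remedy is to extract from the Itô identity itself, via Burkholder--Davis--Gundy and Young's inequality applied to $\sup_{t\le T}|u^\lambda_t|_H^2$, a uniform-in-$\lambda$ bound of the form $E\sup_{t\le T}|u^\lambda_t|_H^2\le C$ depending only on the data in \eqref{8.9.5}. This provides an a.s.-finite majorant for $|u^\lambda_s|_H$ and licenses dominated convergence in the $f$-term. The limiting identity then gives claim (iii); claim (i) follows from $\lambda R_\lambda v_t\to v_t$ in $H$ by Theorem~\ref{theorem 4.13.1}(v), combined with the uniform sup-bound to obtain a continuous $H$-valued modification $u_t$ of $v_t$; and claim (ii) follows by taking the same $\lambda\to\infty$ limit in \eqref{4.9.7} tested against a fixed $v\in V$, using continuity in $t$ of both sides.
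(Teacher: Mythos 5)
Your overall strategy is the same as the paper's: regularize with the resolvent, setting $u^{\lambda}_t=\lambda R_{\lambda}v_t$ (the paper uses $S_n=nR_n$), derive the $H$-valued identity for $R_{\lambda}v_t$ from \eqref{4.9.7} and \eqref{7.27.3}, apply the Hilbert-space It\^o formula to $|u^{\lambda}_t|^2_H$, and pass to the limit. Your algebra for the drift and your identification of the $f$-term as the delicate point are both correct. The gap is in how you propose to close the argument: a uniform-in-$\lambda$ bound $E\sup_{t\le T}|u^{\lambda}_t|^2_H\le C$ is not enough, for two reasons. First, it does not produce a continuous limit process. Pointwise a.e.\ convergence $\lambda R_{\lambda}v_t\to v_t$ in $H$ together with uniform boundedness of the continuous approximants $u^{\lambda}$ does not yield uniform-in-$t$ convergence, hence gives you neither the continuous modification $u_t$ in (i) nor the passage to the limit in the stochastic integral and in (ii)--(iii) for \emph{all} $t$. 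Second, a bound on second moments uniform in $\lambda$ does not furnish an a.s.\ finite majorant $\sup_{\lambda}\sup_{s}|u^{\lambda}_s|_H$; the natural $\lambda$-uniform domination is $|u^{\lambda}_s|_H\le|v_s|_H$ (a.e.\ $s$), but $\int_0^T|f_s|_H|v_s|_H\,ds<\infty$ requires $\esssup_s|v_s|_H<\infty$, which is precisely part of what is being proved.

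The missing ingredient is a \emph{Cauchy} estimate rather than a boundedness estimate: apply It\^o's formula to the difference $|u^{n}_t-u^{k}_t|^2_H$ and run your BDG--Young device on that quantity. The troublesome cross term then becomes
$E\int_0^T|u^n_s-u^k_s|_H\,|(S_n-S_k)f_s|_H\,ds
\le \tfrac14 E\sup_{t\le T}|u^n_t-u^k_t|^2_H
+4E\bigl(\int_0^T|(S_n-S_k)f_s|_H\,ds\bigr)^2$,
and the last expectation tends to zero by dominated convergence thanks to \eqref{8.9.5} and Theorem \ref{theorem 4.13.1}(v) --- this is exactly where the hypothesis $E(\int_0^T|f_t|_H\,dt)^2<\infty$ (rather than square integrability of $|f_t|_H$ in $t$) is used. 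The resulting Cauchy property in $E\sup_{t\le T}|\cdot|^2_H$ gives the continuous limit $u$, uniform convergence in probability, and then the limit passage in all terms of the It\^o identity (with the $f$-term now dominated by $|f_s|_H\sup_t|u_t|_H$ plus a vanishing remainder). With this replacement your proof coincides with the paper's.
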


Proof. We closely follow the proof
of Theorem 2.1 in  \cite{Kr_13}. For $n=1,2,...$ define $S_{n}=nR_{n}$ and
\begin{equation*}
                                                    \label{8.6.1}
u^{n}_{t}=S_{n}v_{0}+\int_{0}^{t}[n (1-S_{n})  
v^{*}_{s}+S_{n}f_{s}]\,ds+ S_{n}m^{\gamma}_{t}.
\end{equation*}
Here the integral  makes sense as the integral  of an $H $-valued
function.
Furthermore, $u^{n}_{t}$ is obviously continuous
as an $H $-valued function.

Also observe that \eqref{4.9.7}  with $ v =S_{n}\psi$,
$\psi\in H $, and \eqref{7.27.3} yield that for almost all
$(\omega,t)$
$$
(\psi,S_{n}v_{t})_{H }=(\psi,S_{n}v_{0})_{H }
+\int_{0}^{t}(\psi, n(1-S_{n}) 
v^{*}_{s}+S_{n}f_{s})_{H }\,ds
$$
\begin{equation*}
                                            \label{8.6.2}
+(\psi, S_{n}m^{\gamma}_{t})_{H }=
(\psi,u^{n}_{t})_{H }.
\end{equation*}
This and the separability of $H $ shows that 
$$
u^{n}_{t}=S_{n}v_{t}
$$ 
for almost all $(\omega,t)$.

 Next, from Doob's inequality
it follows that for any $T\in[0,\infty)$
$$
 E\sup_{t\leq T}  | u^{n}_{t}   | ^{2}_{H }<\infty.
$$

By It\^o's formula for integrals of Hilbert-space 
valued processes
(see, for instance Theorem 4.32 of \cite{DZ_14}) 
we have (a.s.)
$$
  | u^{n}_{t}  | _{H }^{2}=  | S_{n}v_{0}  | _{H }^{2}
+2\int_{0}^{t}(u^{n}_{s}, n(1-S_{n}) 
v^{*}_{s}+S_{n}f_{s})_{H } \,ds
$$
\begin{equation}
                                                    \label{4.11.1}
+\langle S_{n}m^{\gamma}\rangle_{t}
+2\int_{0}^{t}(S_{n}u^{n}_{s}, dm^{\gamma}_{s})_{H },
\end{equation}
$$
  | u^{n}_{t}-u^{k}_{t}  | ^{2}_{H }=  | (S_{n}-S_{k})v_{0}  | _{H }^{2}
+\langle (S_{n}-S_{k})m^{g}\rangle_{t}
+2\int_{0}^{t}(S_{n}u^{n}_{s}-S_{k}u^{k}_{s}, dm^{\gamma}_{s})_{H }
$$
\begin{equation}
                                                    \label{4.11.2}
+2\int_{0}^{t}(u^{n}_{s}-u^{k}_{s},
 [n(1-S_{n}) -k(1-S_{k})  ]
v^{*}_{s}+(S_{n}-S_{k})f_{s})_{H } \,ds
\end{equation}
for all $t\geq0$.

Observe that there is $\Omega'$ with $P(\Omega')=1$ such that
\begin{equation}
                                                 \label{6.19.5}
u^{n}_{t}=S_{n}v_{t},\quad n=1,2,...,\quad
 v_{t},v^{*}_{t}\in V 
\end{equation}
 for almost all $t$ on $\Omega'$. It follows that
in the integrands in \eqref{4.11.1} and \eqref{4.11.2}
 we can replace
$u^{n}_{s}$ with $S_{n}v_{s}$ if $\omega\in\Omega'$
and use \eqref{7.27.3}. Then
for $\omega\in\Omega'$ and $t$ such that \eqref{6.19.5} holds
we have
$$
(u^{n}_{s}-u^{k}_{s},
 [n(1-S_{n}) -k(1-S_{k}) ]
v^{*}_{s})_{H }
$$
$$
=(S_{n}v_{s}-S_{k}v_{s},
 n(1-S_{n}) v^{*}_{s})_{H }
-(S_{n}v_{s}-S_{k}v_{s},k(1-S_{k}) 
v^{*}_{s})_{H }
$$
$$
=(S_{n}v_{s}-S_{k}v_{s},S_{n}v^{*}_{s})_{V }-
(S_{n}v_{s}-S_{k}v_{s},S_{k}v^{*}_{s})_{V }
$$
$$
=(S_{n}v_{s}-S_{k}v_{s},S_{n}v^{*}_{s}-
S_{k}v^{*}_{s})_{V }.
$$

Hence, for $\omega\in\Omega'$ and all $n,k\geq1$ and
 $t\geq0$ we get that
$$
  | u^{n}_{t}  | _{H }^{2}=  | S_{n}v_{0}  | _{H }^{2}
+2\int_{0}^{t}(S_{n}v_{s},
v^{*}_{s})_{V } \,ds
$$
$$
+2\int_{0}^{t}(u^{n}_{s},S_{n}f_{s})_{H}\,ds
+\langle S_{n}m^{\gamma}\rangle_{t}
+2\int_{0}^{t}(S_{n}u^{n}_{s}, dm^{\gamma}_{s})_{H },
$$
$$
  | u^{n}_{t}-u^{k}_{t}  | ^{2}_{H }=  | (S_{n}-S_{k})v_{0}  | _{H }^{2}
+2\int_{0}^{t}(S_{n}v_{s}-S_{k}v_{s},S_{n}v^{*}_{s}-
S_{k}v^{*}_{s})_{V } \,ds
$$
$$
+\langle (S_{n}-S_{k})m^{\gamma}\rangle_{t}
+2\int_{0}^{t}(S^{2}_{n}v_{s}-S^{2}_{k}v_{s},
 dm^{\gamma}_{s})_{H }
$$
$$
+2\int_{0}^{t}(u^{n}_{s}-u^{k}_{s},(S_{n}-S_{k})f_{s})_{H}\,ds.
$$

Furthermore, by Doob's inequality for any $T\in[0,\infty)$
\begin{equation}
                                                    \label{4.11.4}
E\sup_{t\leq T}  | u^{n}_{t}-u^{k}_{t}  | ^{2}_{H }
\leq I_{nk}^{1}+2I^{2}_{nk}+I^{3}_{nk}+4(I^{4}_{nk})^{1/2}
+2I^{5}_{mk},
\end{equation}
where
$$
I_{nk}^{1}=E  | (S_{n}-S_{k})v_{0}  | _{H }^{2}
$$
$$
I_{nk}^{2}= E\int_{0}^{T}|((S_{n}-S_{k})v_{s},
(S_{n}-S_{k})
v^{*}_{s})_{V} |\,ds,
$$
$$
I_{nk}^{3}=E
\langle (S_{n}-S_{k})m^{\gamma}\rangle_{T}
=E  | (S_{n}-S_{k})m^{\gamma}_{T} | ^{2}
_{H },
$$
$$
I_{nk}^{4}=E \int_{0}^{T}   | S^{2}_{n}v_{s}
-S^{2}_{k}v_{s}  | _{H }^{2}\,ds ,
$$
$$
I^{5}_{mk}=E\int_{0}^{T}
  | u^{n}_{s}-u^{k}_{s}  | _{H}  | (S_{n}-S_{k})f_{s}  | _{H}\,ds.
$$
By using the dominated convergence theorem, 
Theorem \ref{theorem 4.13.1}, and the inequality
$$
|((S_{n}-S_{k})v_{s}, (S_{n}-S_{k})
v^{*}_{s})_{V } |
$$
$$
\leq  | (S_{n}-S_{k})v_{s}  | ^{2}_{V }
+  | (S_{n}-S_{k})v^{*}_{s}  | ^{2}_{V },
$$
we easily conclude that $I_{nk}^{1}+2I^{2}_{nk}+I^{3}_{nk}\to0$
as $n,k\to\infty$. Furthermore, $S_{n}^{2}-S_{k}^{2}
=(S_{n}+S_{k})(S_{n}-S_{k})$ so that
$$
I^{4}_{nk}\leq 4 E\int_{0}^{T}  | (S_{n}-S_{k})
v_{s}  | _{H }^{2} \,ds, 
$$
which by the dominated convergence theorem
implies that $I^{4}_{nk}\to0$ 
as $n,k\to\infty$ as well. Finally,
$$
I^{5}_{nk}\leq (1/4)E\sup_{t\leq T}
  | u^{n}_{t}-u^{k}_{t}  | ^{2}_{H }
+4E\Big(\int_{0}^{T}  | (S_{n}-S_{k})f_{s}  | _{H}\,ds\Big)^{2}.
$$

 We now conclude from \eqref{4.11.4} that its left-hand side
tends to zero. Furthermore,
$$
E\int_{0}^{T}  | u^{n}_{t}-v_{t}  | ^{2}_{V }\,dt
=E\int_{0}^{T}  | (S_{n} -1)v_{t}  | ^{2}_{V }\,dt\to0.
$$
Hence $u^{n}_{t}$ converges to $v_{t}$ in $L_{2}
(\Omega\times(0,T),V)$
and converges uniformly on $[0,T]$ as $H $-valued functions
in probability. The latter limit we denote by $u_{t}$
and show that this function is the one we want.
Of course, $u_{t}$ is a continuous $H $-valued functions,
 it is $\cF_{t}$-adapted, and $u_{t}=v_{t}$ for almost all
$(\omega,t)$.

One easily obtains that for each $t$ 
equation \eqref{4.9.9} holds with probability
one by passing to the limit in \eqref{4.11.1}.
Since both parts of \eqref{4.9.9} are continuous in $t$,
it holds on a set of full probability for all $t$.

Obviously  \eqref{4.9.7} will hold for almost all $(\omega,t)$
if we replace $v_{t}$ with $u_{t}$, that is,  \eqref{4.9.8}
holds for any $ v \in V$ for almost all
$(\omega,t)$. The continuity of both parts of 
\eqref{4.9.8} with respect to $t$ and $ v \in V$
and the separability of $V$ then imply that there is
a set $\Omega'$ of full probability such that assertion (iii)
holds.

The theorem is proved.
\begin{remark}
The reader understands, of course, that condition
\eqref{8.9.5} can be  replaced with the same condition but
without expectation sign 
and with $T$ replaced by any stopping time
$\tau$. Of course, then the same changes should
be applied to \eqref{6,21.2}. This generalization
is easily achieved by using appropriate stopping times.
\end{remark}

  \mysection{Linear equations with  monotone
  operators}
                                  \label{section 4.18.1}
Traditionally, equations wih monotone operators
involve $V$ and its conjugate $V^{*}$.                                 
The duality between $v\in V$ and $v^{*}\in V^{*}$
is denoted by $\langle v, v^{*}\rangle$.
  
Fix $T\in(0,\infty)$ and assume for any 
$ t\in[0,T],\omega\in\Omega$ we are given
linear operators $A_{t}:V\to V^{*}$, $B_{t}:V\to \ell_{2}(H)$,
such that, for each $v\in V$, $A_{t}v,B_{t}v$ are
Lebesgue measurable
with respect to $(t,\omega)$ and are $\cF_{t}$-adapted.

We single out the following coercivity and boundedness
assumptions.

\begin{assumption}
                                               \label{assumption 4.18.1}
There are constants $\delta>0, K<\infty$ 
 and $K_0<\infty$  
such that, for any
$v\in V, t\in[0,T],\omega\in\Omega$ we have
\begin{equation}
                                    \label{4.18.2}
2\langle v, A_{t}v\rangle+\sum_{k}  | B^{k}_{t}v  | ^{2}_{H}
\leq -\delta  | v  | ^{2}_{V}\,,
\end{equation}
\begin{equation}
                                    \label{4.18.30}
  | A_{t}v  | _{V^{*}}\leq K  | v  | _{V}
   ,
  \quad 
  \sum_{k}  | B^{k}_{t}v  | ^{2}_{H}\leq K_0^2|v|^2_{V} .
\end{equation}

\end{assumption}

\begin{remark}
                              \label{remark 4.19.2} 
Observe that
$$
\sum_{k}  | B^{k}_{t}v  | ^{2}_{H}\leq-2\langle v, A_{t}v\rangle
\leq 2K  | v  | _{V}^{2},
$$
 i.e., the ``coercivity" condition \eqref{4.18.2}
implies an upper bound for the operator norm of $B$ in terms of an 
upper bound for the operator norm for $A$. 
Nevertheless, we introduce also 
an upper bound $K_0$ for the operator norm of $B_t$, 
because that is 
what plays a role in the estimate 
\eqref{4.27.1} below. 
 
\end{remark}

\begin{definition}
                                    \label{definition 4.20.1}
Denote by $\bV$ the Banach space of $H$-valued,
continuous, and $\cF_{t}$-adapted functions
$u_{t}$ on $[0,T]\times\Omega$ such that $u_{t}\in V$
for almost all $(t,\omega)$ and
\begin{equation*}
                                      \label{4.18.6}
  | u_{\cdot}  | _{\bV}^{2}:=E\sup_{t\leq T}  | u_{t}  | _{H}^{2}
+E\int_{0}^{T}  | u_{t}  | ^{2}_{V}\,dt<\infty.
\end{equation*}
\end{definition}

For given $u_{0}$, we will be dealing with the problem
\begin{equation}
                            \label{4.24.1}
du_{t}=\big[ A_{t}u_{t}+f^{*}_{t} 
+ f_{t}+g_{t} \big]\,dt
+\big( B^{k}_{t}u_{t} +h^{k}_{t}\big) \,dw^{k}_{t},\quad
t\leq T,\quad u_{t}\big|_{t=0}=u_{0}.
\end{equation}

\begin{definition}
                             \label{definition 4.25.1}
An $H$-valued, $\cF_{t}$-adapted,
$H$-continuous in $t$ function $u_{t}$ on $\Omega\times[0,T]$
is called an  
 $H$-solution of \eqref{4.24.1} if
$u_{\cdot}\in V$ for almost all $(\omega,t)$
$$
\int_{0}^{T}  | u_{t}  | _{V}^{2}\,dt<\infty
$$
(a.s.) and for any $ v \in V$, (a.s.) for all $t\in[0,T]$,
$$
( v ,u_{t})_{H}=( v ,u_{0})_{H}+\sum_{k}\int_{0}^{t}
\big( v ,B^{k}_{s}u_{s}+h^{k}_{s}\big)_{H}\,dw^{k}_{s}
$$
\begin{equation}
                                      \label{4.18.5}
+\int_{0}^{t}\big[\langle v ,A_{s}u_{s}+f^{*}_{s}\rangle
+( v ,f_{s}+g_{s})_{H}\big]\,ds.
\end{equation}
                             
\end{definition}
 
\begin{theorem}
                               \label{theorem 4.18.1}
Let Assumption \ref{assumption 4.18.1} hold and let $\varepsilon>0$.                              
Assume that on $\Omega\times[0,T]$
we are given a $V^{*}$-valued function $f^{*}_{t}$,
   $H$-valued functions $f_{t},g_{t}$,
and an $\ell_{2}(H)$-valued $h_{t}=(h^{k}_{t},k=1,2,...)$,
which are Lebesgue measurable in $(t,\omega)$,  
$\cF_{t}$-adapted, and such that
\begin{equation*}
                                \label{4.18.030}
E\Big(\int_{0}^{T}  |g_{t} |_{H}\,dt\Big)^{2}
+E\int_{0}^{T}\Big( |f^{*}_{t} |^{2}_{V^{*}}
+\alpha^{-1}_{t}|f_{t}|^{2}_{H}+
 |h^{\cdot} _{t} |^{2}_{\ell_{2}(H)}\Big)\,dt<\infty,
\end{equation*}
where $\alpha_{t}\geq\varepsilon $ is a   predictable 
function such that
$$
\sup_{\Omega}\int_{0}^{T}\alpha_{t}\,dt<\infty.
$$

Then for any $H$-valued $\cF_{0}$-measurable $u_{0}$
such that $E  | u_{0}  | ^{2}_{H}<\infty$,

(i) there exists
an   $H$-solution of \eqref{4.24.1}.
For this solution we have  $u_{\cdot}\in\bV$. 

(ii) For this solution we have
 $$
 E \sup_{t\leq T}|u_{t}|_{H}^{2}e^{-2  \phi_{t}}
 + E\int_{0}^{T}e^{-2  \phi_{s}}|u_{s}
 |_{V}^{2}\,ds+  E\int_{0}^{T}\alpha_{s}
 |u_{s}|_{H}^{2}e^{-2  \phi_{s}}\,ds
 $$
 $$
 \leq  N E|u_0|^2_H 
 +
 NE\int_{0}^{T}
 e^{-2  \phi_{s}}\big(|f^{*}_{s}
 |_{V^{*}}^{2}+\alpha_{s}^{-1}|f_{s}|_{H}^{2}
 +|h^{\cdot}_{s}|^{2}_{\ell_{2}(H)}\big)\,ds
 $$
\begin{equation}
                                                \label{4.27.1}
+N E\Big(\int_{0}^{T}e^{-   \phi_{s}}
 |g_{s}|_{H}\,ds\Big)^{2},
\end{equation}
where (and in the proof) 
$$
\phi_{t}:=\int_{0}^{t}\alpha_{s}\,ds.
$$
and the 
(finite) constants $N$ depend
only on $\delta,K_{ 0}$.

Moreover, if we have another $H$-solution $u'_{t}$, then 
$$
P(\sup_{t\leq T}  | u_{t}-u'_{t}  | _{H}=0)=1.
$$
\end{theorem}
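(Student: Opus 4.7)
The plan is to combine a Galerkin approximation with an energy method based on the improved It\^o formula of Theorem \ref{theorem 4.9.3}. Since the equation is linear, uniqueness follows from estimate \eqref{4.27.1} applied to the difference of two solutions (which satisfies the homogeneous equation with vanishing data), so the core tasks are (a) establishing the a priori estimate \eqref{4.27.1}, and (b) constructing a solution in $\bV$.

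For (a), I would apply Theorem \ref{theorem 4.9.3} with $v^{*}_{s}\mapsto A_{s}u_{s}+f^{*}_{s}$, $f_{s}\mapsto f_{s}+g_{s}$, and $m^{\gamma}_{t}\mapsto\sum_{k}\int_{0}^{t}(B^{k}_{s}u_{s}+h^{k}_{s})\,dw^{k}_{s}$; note that condition \eqref{8.9.5} holds precisely because $|g|_{H}$ is only required to be integrable in $t$. Multiplying the resulting expansion by $e^{-2\phi_{t}}$ and using the absolute continuity of $\phi_{t}$ one obtains
\begin{equation*}
|u_{t}|_{H}^{2}e^{-2\phi_{t}}+2\int_{0}^{t}e^{-2\phi_{s}}\alpha_{s}|u_{s}|_{H}^{2}\,ds = |u_{0}|_{H}^{2}+\int_{0}^{t}e^{-2\phi_{s}}J_{s}\,ds+M_{t},
\end{equation*}
with $M_{t}$ the stochastic integral term and
\begin{equation*}
J_{s}=2\langle u_{s},A_{s}u_{s}+f^{*}_{s}\rangle+2(u_{s},f_{s}+g_{s})_{H}+\sum_{k}|B^{k}_{s}u_{s}+h^{k}_{s}|_{H}^{2}.
\end{equation*}
The coercivity bound \eqref{4.18.2} controls $2\langle u,Au\rangle+\sum_{k}|B^{k}u|_{H}^{2}$ by $-\delta|u|_{V}^{2}$. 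Young's inequality (together with \eqref{4.18.30}) absorbs $2\langle u,f^{*}\rangle$ and $2\sum_{k}(B^{k}u,h^{k})_{H}$ into a fraction of $\delta|u|_{V}^{2}$, while $2(u,f)_{H}\leq\alpha_{s}|u|_{H}^{2}+\alpha_{s}^{-1}|f|_{H}^{2}$ is absorbed against the $2\alpha_{s}|u_{s}|_{H}^{2}e^{-2\phi_{s}}$ term on the left. The delicate contribution is $2(u,g)_{H}$, which I handle by the sup--$L_{1}$ splitting
\begin{equation*}
2\int_{0}^{t}e^{-2\phi_{s}}(u_{s},g_{s})_{H}\,ds \leq \tfrac{1}{2}\sup_{s\leq t}|u_{s}|_{H}^{2}e^{-2\phi_{s}}+2\Big(\int_{0}^{t}e^{-\phi_{s}}|g_{s}|_{H}\,ds\Big)^{2}.
\end{equation*}
Taking $E\sup_{t\leq T}$ and applying Doob's inequality to $M_{t}$ (whose quadratic variation is estimated using $\sum_{k}|B^{k}u|_{H}^{2}\leq K_{0}^{2}|u|_{V}^{2}$ and another Young absorption) produces \eqref{4.27.1}.

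For (b), I would run the classical Galerkin scheme: pick an orthonormal basis $\{e_{i}\}$ of $V$ with $e_{i}\in V$, set $V_{n}:=\mathrm{span}(e_{1},\ldots,e_{n})$, and look for $u^{n}_{t}=\sum_{i=1}^{n}c^{n}_{i}(t)e_{i}$ by projecting \eqref{4.18.5} onto $V_{n}$ (i.e.\ testing against each $e_{j}$ with $j\leq n$). This yields a linear SDE system in $\mathbb{R}^{n}$ with adapted, measurable coefficients, hence a unique continuous adapted solution. The a priori estimate of step (a) applies uniformly in $n$ (the projected coefficients inherit the coercivity and boundedness). Weak compactness produces a subsequence $u^{n}\rightharpoonup u$ in $L_{2}(\Omega\times[0,T],V)$; linearity of $A,B$ and boundedness of the projections allow passing to the limit in each finite-dimensional weak equation, and density of $\bigcup_{n}V_{n}$ in $V$ extends \eqref{4.18.5} to all $v\in V$. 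Applying Theorem \ref{theorem 4.9.3} to $u$ itself then produces the required $H$-continuous modification in $\bV$ and validates the estimate for the limit.

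The principal obstacle is the $g$-term. The classical It\^o formula for the squared $H$-norm requires $|f_{\cdot}|_{H}$ to be square-integrable in $t$, and the standard Krylov--Rozovskii existence machinery assumes all free terms are square-integrable in $t$. The purpose of Theorem \ref{theorem 4.9.3}, which weakens this to the integrability condition in \eqref{8.9.5}, is precisely to accommodate such a $g$; the sup--$L_{1}$ splitting above is the estimate-level counterpart that turns this It\^o formula into the $\sup_{t}$-bound \eqref{4.27.1}.
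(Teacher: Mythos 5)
Your a priori estimate (a) follows the paper's argument essentially verbatim: Theorem \ref{theorem 4.9.3}, the weight $e^{-2\phi}$, coercivity, Young absorptions, and the sup--$L_{1}$ splitting of the $g$-term. For existence, however, you take a genuinely different route. The paper does not use a Galerkin scheme: it first proves (Lemma \ref{lemma 4.19.1}) existence for the reduced problem with $f^{*}=0$, $B=0$, $h=0$ by a Yosida-type approximation built from the resolvent $S_{n}=nR_{n}$ of Section \ref{section 4.23.1} (replacing $A$ by the bounded operators $n(1-S_{n})QA S_{n}$ and solving an ODE in $H$), and then subtracts this solution; the remaining equation has free terms $\bar f^{*}=f^{*}+Au^{0}$, $\bar h=h+Bu^{0}$ that are square-integrable in $t$, so the classical theory of \cite{KR_79} applies off the shelf. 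Your Galerkin scheme is self-contained and avoids invoking \cite{KR_79}, at the price of redoing the standard weak-compactness and identification arguments; it does work, because the finite-dimensional energy identity involves $\sum_{k}|\Pi_{n}(B^{k}u^{n}+h^{k})|_{H}^{2}\leq\sum_{k}|B^{k}u^{n}+h^{k}|_{H}^{2}$ (with $\Pi_{n}$ the $H$-orthogonal projection onto $V_{n}$), so coercivity and the sup--$L_{1}$ splitting give bounds uniform in $n$. The paper's decomposition has the advantage that the only genuinely new difficulty (the free term that is merely $L_{1}$ in time) is isolated in a pathwise ODE, with everything stochastic delegated to known results.

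Two points need repair. First, to bound $E\sup_{t\leq T}|M_{t}|$ you need Davis's inequality (Burkholder--Davis--Gundy with exponent one), not Doob's: the key step is $E\langle M\rangle_{T}^{1/2}\leq E\big[\big(\sup_{s}e^{-2\phi_{s}}|u_{s}|_{H}^{2}\big)^{1/2}\big(\int_{0}^{T}e^{-2\phi_{s}}(2K_{0}^{2}|u_{s}|_{V}^{2}+2|h_{s}^{\cdot}|_{\ell_{2}(H)}^{2})\,ds\big)^{1/2}\big]$ followed by the Cauchy--Schwarz inequality in $\omega$, which requires the square root \emph{inside} the expectation; Doob's $L_{2}$ inequality instead leads to $E[\sup_{s}e^{-2\phi_{s}}|u_{s}|_{H}^{2}\cdot\int(\cdots)]$, which cannot be split using only the quantities you control. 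Second, the uniqueness assertion concerns all $H$-solutions in the sense of Definition \ref{definition 4.25.1}, which requires only $\int_{0}^{T}|u_{t}|_{V}^{2}\,dt<\infty$ almost surely; the difference of two such solutions need not lie in $\bV$, so \eqref{4.27.1} does not apply to it directly. Either localize with stopping times, or argue as the paper does in Remark \ref{remark 4.20.1}: pathwise, $|v_{t}|_{H}^{2}\leq 2\int_{0}^{t}(v_{s},dm_{s})_{H}$, and a nonnegative continuous local martingale starting at zero vanishes identically.
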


\begin{remark}
                                     \label{remark 4.20.1}
The uniqueness statement in Theorem \ref{theorem 4.18.1}
follows immediately from Theorem \ref{theorem 4.9.3}. Indeed, for
$v_{t}:=u_{t}-u'_{t}$. we have 
$$
  | v_{t}  | ^{2}_{H}\leq 2\int_{0}^{t}(v_{s},dm_{s})_{H},
$$
where $m_{s}$ is certain $H$-valued martingale. Hence
the stochastic integral above is nonnegative implying that
it is zero and $E\sup_{t\leq T}  | v_{t}  | _{H}=0$.
\end{remark}

\begin{lemma}
                                     \label{lemma 4.19.1}
Assertion  (i) of Theorem \ref{theorem 4.18.1}
is true under the additional assumption that
$f^{*}=0,B=0,h=0$.

\end{lemma}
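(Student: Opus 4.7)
The idea is to build a solution by Galerkin approximation and then invoke Theorem \ref{theorem 4.9.3} to obtain an $H$-continuous representative. Using separability of $V$ and density of $V$ in $H$, first fix an $H$-orthonormal system $\{e_i\}_{i\geq1}\subset V$ whose linear span is dense in $V$, and put $V_n:=\mathrm{span}(e_1,\ldots,e_n)$. For each $n$, I would seek $u^n_t=\sum_{i=1}^n c^n_i(t)e_i\in V_n$ solving the linear ODE
\[\dot c^n_i(t)=\sum_{j=1}^n\langle e_i,A_t e_j\rangle c^n_j(t)+(e_i,f_t+g_t)_H,\qquad c^n_i(0)=(u_0,e_i)_H,\quad i\leq n.\]
The coefficients $\langle e_i,A_te_j\rangle$ are bounded in $t$ by $K|e_i|_V|e_j|_V$ and jointly measurable, while the forcing lies in $L_1([0,T];\bR^n)$ a.s.\ (since Cauchy--Schwarz with weight $\alpha_t$ gives $\int_0^T|f_t|_H\,dt\le(\sup_\Omega\int\alpha_t\,dt)^{1/2}(\int\alpha_t^{-1}|f_t|_H^2\,dt)^{1/2}$). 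Carath\'eodory theory then yields a unique absolutely continuous, $\cF_t$-adapted solution.

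Multiplying the ODE by $c^n_i$ and summing gives the classical chain-rule identity $\frac{d}{dt}|u^n_t|_H^2=2\langle u^n_t,A_tu^n_t\rangle+2(u^n_t,P_n(f_t+g_t))_H$, where $P_n$ is the $H$-projection onto $V_n$. Applying the coercivity \eqref{4.18.2} (with $B=0$), absorbing the $f_t$ contribution via Young's inequality weighted by $\alpha_t$, and handling $g_t$ by pulling out $\sup_{s\le T}|u^n_s|_He^{-\phi_s}$, then multiplying by the integrating factor $e^{-2\phi_t}$ and taking expectations, yields a uniform bound
\[E\sup_{t\le T}|u^n_t|_H^2 e^{-2\phi_t}+E\int_0^T e^{-2\phi_s}|u^n_s|_V^2\,ds\le R,\]
independent of $n$. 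Extracting a subsequence $u^{n_k}\rightharpoonup u$ weakly in $L_2(\Omega\times(0,T);V)$, linearity and boundedness of $A$ give $Au^{n_k}\rightharpoonup Au$ weakly in $L_2(\Omega\times(0,T);V^*)$. To identify the limit, I would test the Galerkin equation against $\eta\psi'(t)$ with $\eta$ a bounded $\cF_T$-measurable random variable and $\psi\in C^\infty([0,T])$ with $\psi(T)=0$, taking $v\in V_m$ and $n_k\ge m$; integrating by parts in $t$ and passing to the weak limit gives, for a.e.\ $(\omega,t)$ and, by density, every $v\in V$,
\[(v,u_t)_H=(v,u_0)_H+\int_0^t\bigl[\langle v,A_su_s\rangle+(v,f_s+g_s)_H\bigr]\,ds.\]

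To upgrade $u$ to an $H$-continuous process, represent $A_su_s\in V^*$ by $v^*_s\in V$ through the Riesz isomorphism, so that $\langle v,A_su_s\rangle=(v,v^*_s)_V$ with $|v^*_s|_V=|A_su_s|_{V^*}\le K|u_s|_V$, hence $v^*_\cdot\in L_2(\Omega\times(0,T);V)$; further, $E(\int_0^T|f_s+g_s|_H\,ds)^2<\infty$ by Cauchy--Schwarz on $f_s$ (with weight $\alpha_s$) combined with the direct bound on $g_s$. Theorem \ref{theorem 4.9.3} (with $m^\gamma=0$) now applies and produces an $H$-continuous, $\cF_t$-adapted modification $u_t$ satisfying the variational identity for all $v\in V$ and all $t\in[0,T]$, i.e., an $H$-solution in the sense of Definition \ref{definition 4.25.1}; membership $u_\cdot\in\bV$ follows from the a priori bound together with the inequality $e^{-2\phi_t}\ge e^{-2\phi_T}>0$, which is uniform in $\omega$. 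I expect the main obstacle to be the identification of the weak limit in the integrated equation, since weak convergence in $L_2(\Omega\times(0,T);V)$ does not directly yield pointwise-in-$t$ information; this is overcome by testing against temporal derivatives of smooth cutoffs and exploiting the linearity of $A$, after which Theorem \ref{theorem 4.9.3} packages the $H$-continuity cleanly.
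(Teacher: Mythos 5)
Your proof is correct, and it reaches the same three milestones as the paper's proof (a uniform energy bound from the coercivity \eqref{4.18.2}, extraction of a weak limit in $L_{2}(\Omega\times(0,T),V)$ with identification of the limit equation, and then Theorem \ref{theorem 4.9.3} with $m^{\gamma}=0$ to produce the $H$-continuous representative), but it uses a genuinely different approximation device. You run a classical Galerkin scheme on finite-dimensional subspaces $V_{n}\subset V$ spanned by an $H$-orthonormal system, so coercivity of $A$ applies directly to $u^{n}_{t}\in V$ and the $V$-bound is on $u^{n}$ itself; the limit is identified by testing against $\eta\psi'(t)$ and integrating by parts in time. The paper instead regularizes with the resolvent operators $S_{n}=nR_{n}$ of Theorem \ref{theorem 4.13.1}, replacing $A$ by the $H$-bounded operator $A^{n}_{s}=n(1-S_{n})QA_{s}S_{n}$; the problem stays infinite-dimensional but becomes a linear ODE in $H$, and coercivity is transferred through the identity $2(u^{n}_{s},A^{n}_{s}u^{n}_{s})_{H}=2\langle S_{n}u^{n}_{s},A_{s}S_{n}u^{n}_{s}\rangle\leq-\delta|S_{n}u^{n}_{s}|^{2}_{V}$, so the uniform $V$-bound is on $S_{n}u^{n}$ rather than $u^{n}$, and the limit is identified via weak continuity of the maps $v_{\cdot}\mapsto(v,v_{t})_{H}$ and $v_{\cdot}\mapsto\int_{0}^{t}\langle v,A_{s}v_{s}\rangle\,ds$. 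Your route is the more elementary and self-contained one (it needs only separability of $V$), while the paper's keeps everything inside the resolvent calculus already set up for Theorem \ref{theorem 4.9.3}; both handle the merely $L_{1}$-in-time free term $F_{s}=f_{s}+g_{s}$ in the same way. Two small points to tighten: adaptedness of the weak limit should be noted (the adapted processes form a weakly closed subspace of $L_{2}(\Omega\times(0,T),V)$), and the bound $E\sup_{t\leq T}|u_{t}|^{2}_{H}<\infty$ is cleanest obtained by redoing the energy identity for the limit via \eqref{4.9.9}, since the uniform bound you quote is a priori only for the approximations; this is exactly the paper's closing remark about ``manipulations as after \eqref{4.19.5}.''
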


Proof. Let $Q:V^{*}\to V$ be defined as the natural
identification of $V^{*}$ with $V$ (so that $\langle v,
v^{*}\rangle=(v,Qv^{*})_{V}$). Define $A^{n}_{s}=
n(1-S_{n})QA_{s}S_{n}$ and
in $H$ for $n\geq1, t\leq T$ consider the following equation
\begin{equation}
                                                            \label{4.19.1}
u^{n}_{t}=S_{n}u_{0}+\int_{0}^{t}\big[A^{n}_{s}
u^{n}_{s}+ F_{s}\big]\,ds,\quad F_{s}=f_{s}+g_{s}.
\end{equation}
Here
$$
E\Big(\int_{0}^{T}|f_{t}|_{H}\,dt\Big)^{2}
\leq E \int_{0}^{T}\alpha_{t}\,dt
\int_{0}^{T}\alpha_{t}^{-1}|f_{t}|_{H}^{2}\,dt<\infty,
$$
so that $F_{\cdot}\in L_{1}([0,T],H)$ (a.s.).

Also observe that  for any
$u\in H$, $s\leq T$
$$
  | A^{n}_{s}u  | _{H}\leq 2n  | QA_{s}S_{n}u  | _{H}
\leq 2n  | QA_{s}S_{n}u  | _{V}=2n  | A_{s}S_{n}u  | _{V^{*}}
$$
$$
\leq 2nK  |  S_{n}u  | _{V }\leq 4n^{2}K  | u  | _{H }.
$$
It follows that, for each $\omega$, equation
\eqref{4.19.1} is a first-order linear differential
equation in $H$ with bounded opertors $A^{n}_{s}$.
Therefore, it has a unique solution, which can be found
by successive approximations  showing that the solution
$u^{n}_{t}$ inherits the measurability properties of $F_{t}$
and is a continuous   $H$-valued function.

Furthermore
$$
  | u^{n}_{t}  | ^{2}_{H}=  | S_{n}u_{0}  | ^{2}_{H}
+2\int_{0}^{t}\big(u^{n}_{s},A^{n}_{s}u^{n}_{s}+ F_{s}\big)_{H}\,ds,
$$
where
\begin{equation}
                                       \label{4.19.5}
2\big(u^{n}_{s},A^{n}_{s}u^{n}_{s}\big)_{H}=
2\big(S_{n}u^{n}_{s},QA _{s}S_{n}u^{n}_{s}\big)_{V}
=2\langle S_{n}u^{n}_{s}, A _{s}S_{n}u^{n}_{s}\rangle
\leq-\delta  | S_{n}u^{n}_{s}  | _{V}^{2}.
\end{equation}

Consequently,
$$
\sup_{t\leq T}  | u^{n}_{t}  | ^{2}_{H}+ \delta
\int_{0}^{T}  | S_{n}u^{n}_{s}  | _{V}^{2}\,ds
\leq   | S_{n}u_{0}  | ^{2}_{H}+2\sup_{t\leq T}  | u^{n}_{t}  |  _{H}
\int_{0}^{T}  | F_{s}  | _{H}\,ds
$$
$$
\leq   | S_{n}u_{0}  | ^{2}_{H}+(1/2)\sup_{t\leq T}  | u^{n}_{t}  | ^{2} _{H}
+2\Big(\int_{0}^{T}  | F_{s}  | _{H}\,ds\Big)^{2},
$$
\begin{equation}
                                      \label{4.19.4}
\sup_{t\leq T}  | u^{n}_{t}  | ^{2}_{H}+2\delta
\int_{0}^{T}  | S_{n}u^{n}_{s}  | _{V}^{2}\,ds
\leq 2  | S_{n}u_{0}  | ^{2}_{H}+ 
4\Big(\int_{0}^{T}  | F_{s}  | _{H}\,ds\Big)^{2}.
\end{equation}

This estimate implies that there is a subsequence
$n'\to\infty$ such that the functions $S_{n'}u^{n'}_{\cdot}$
converge weakly in $L_{2}( \Omega\times[0,T],V)$
to a function $u_{\cdot}\in L_{2}( \Omega\times[0,T],V)$.
Also note that, for any $v\in V$, the linear operators
$$
F:L_{2}( \Omega\times[0,T],V)\to L_{2}( \Omega\times[0,T]),\quad
F(v_{\cdot})_{t}=(v,v_{t})_{H},
$$
$$
G:L_{2}( \Omega\times[0,T],V)\to L_{2}( \Omega\times[0,T]),\quad
G(v_{\cdot})_{t}=
\int_{0}^{t}\langle v,A_{s}v_{s}\rangle\,ds
$$
are bounded and hence weakly continuous. It follows that,
since,  for any $v\in V$,
$$
(v,S_{n'}u^{n'}_{t})_{H}=(S_{n'}v,S_{n'}u_{0})_{H}+
\int_{0}^{t}\big[\langle S^{2}_{n'}v,A_{s}S_{n'}u^{n'}_{s}\rangle
+(v,S_{n'}F_{s})_{H}\big]\,ds,
$$
for almost all $(t,\omega)\in  \Omega\times[0,T]$ we have
$$
(v,u_{t})=(v,u_{0})_{H}+\int_{0}^{t}
\big[\langle v,A_{s}u_{s}\rangle+(v,F_{s})_{H}\big]\,ds.
$$
After that it only remains to apply Theorem
\ref{theorem 4.9.3} (with $m_{t}=0$) and perform
some manipulations as after \eqref{4.19.5}. 
The    lemma  is proved.

{\bf Proof of Theorem \ref{theorem 4.18.1}}.
Denote by $u^{0}_{t}$   the $H$-solution from Lemma 
\ref{lemma 4.19.1} and observe that by setting
$v_{t}=u_{t}-u^{0}_{t}$ we can rewrite \eqref{4.18.5}
as
\begin{equation}
                                            \label{4.19.6}
( v ,v_{t})_{H}=\sum_{k}\int_{0}^{t}
\big( v ,B^{k}_{s}v_{s}+\bar h^{k}_{s}\big)_{H}\,dw^{k}_{s}
+\int_{0}^{t} \langle v ,A_{s}v_{s}+\bar f^{*}_{s}\rangle \,ds,
\end{equation}
where $\bar h^{k}_{s}=h^{k}_{s}+B^{k}_{s}u^{0}_{s}$,
$\bar f^{*}_{s}=f^{*}_{s}+A_{s}u^{0}_{s}$. Observe that
$$
E\int_{0}^{T}\sum_{k}  | B^{k}_{s}u^{0}_{s}  | _{H}^{2}\,ds
\leq  K_0^2 E\int_{0}^{T}  | u^{0}_{s}  | _{V}^{2}\,ds<\infty,
$$
$$
E\int_{0}^{T}  | A_{s}u^{0}_{s}  | ^{2}_{V^{*}}
\leq  K^{2}E\int_{0}^{T}  | u^{0}_{s}  | _{V}^{2}\,ds<\infty.
$$
After that the results in \S 2  of \cite{KR_79} allow us
to assert that equation \eqref{4.19.6} has a unique 
continuous $H$-valued 
solution $v_{t}$ such that $v_{\cdot}\in\bV$.
Upon setting $u_{t}=v_{t}+u^{0}_{t}$ we obtain the existence part
in Theorem \ref{theorem 4.18.1}. 

To prove \eqref{4.27.1} 
use Theorem \ref{theorem 4.9.3}
with
$$
 \sum_{k}\int_{0}^{t}\gamma^{-1}_{s}[
B^{k}_{s}u_{s}+h^{k}_{s}]\,dw^{k}_{s},
$$
in place of $m_{t}$, where
$$
\gamma^{2}_{s}=\sum_{k}|B^{k}_{s}u_{s}+h^{k}_{s}|^{2}_{H}
 ,\quad (0/0:=0) .
$$
Then we get  
 $$
 |u_{t}|_{H}^{2}e^{-2  \phi_{t}}=|u_{0}|^{2}_{H}
 -2\int_{0}^{t}\alpha_{s}
 |u_{s}|_{H}^{2}e^{-2  \phi_{s}}\,ds
 $$
 $$
+\int_{0}^{t}e^{-2  \phi_{s}}
 \big[2\langle u_{s}, A_{s}u_{s}+f^{*}_{s}\rangle
 +2(u_{s},f_{s}+g_{s})_{H}\big]\,ds
 $$
 \begin{equation*}
                                \label{4.28.2}
  +\sum_{k}\int_{0}^{t}e^{-2  \phi_{s}}
 |B^{k}_{s}u_{s}+h^{k}_{s}|^{2}_{H}\,ds+m_{t},
 \end{equation*}
 where
 $$
 m_{t}=  2\sum_{k}\int_{0}^{t}e^{-2  \phi_{s}}
 (u_{s},B^{k}_{s}u_{s}+h^{k}_{s})_{H}\,dw^{k}_{s}.
 $$
 
 Observe that
 $$
 2\langle u_{s}, A_{s}u_{s} \rangle
 +\sum_{k}|B^{k}u_{s}|^{2}
 _{H}\leq -\delta |u_{s}|^{2}_{V},
 $$
 $$
 2\langle u_{s},  f^{*}_{s}\rangle\leq(\delta/4)|u|^{2}_{V}
 +(4/\delta)|f^{*}_{s}|^{2}_{V^{*}},
 $$
 $$
 2(u_{s},f_{s})_{H}\leq\alpha_{s}|u_{s}|_{H}^{2}
 +\alpha^{-1}_{s}|f_{s}|^{2}_{H},
 $$
  $$
 \sum_{k}|B^{k}_{s}u_{s}+h^{k}_{s}|^{2}_{H}-\sum_{k}
 |B^{k}_{s}u_{s}|^{2}_{H}\leq
  \sum_{k}2|B^{k}_{s}u_{s}|_{H}
 |h^{k}_{s}|_{H}+ |h ^{\cdot}_{s}|_{\ell_{2}(H)}^{2}
 $$
 $$
 \leq  (\delta/4)|u_{s}|^{2}_{V}
 +[(  4K_0^2 /\delta)+1]|h ^{\cdot}_{s}|_{\ell_{2}(H)}^{2}.
 $$
 Hence,
$$
   |u_{t}|_{H}^{2}e^{-2  \phi_{t}}
 +(\delta/2)\int_{0}^{t}e^{-2  \phi_{s}}|u_{s}
 |_{V}^{2}\,ds +  \int_{0}^{t}\alpha_{s}
 |u_{s}|_{H}^{2}e^{-2  \phi_{s}}\,ds $$
 $$
 \leq  \int_{0}^{t}
 e^{-2  \phi_{s}}\big((4/\delta)|f^{*}_{s}
 |_{V^{*}}^{2}+\alpha^{-1}_{s}|f_{s}|^{2}_{H}\big)\,ds+
 2 \int_{0}^{t}e^{-2  \phi_{s}}
 |(u_{s},g_{s})_{H}|\,ds
 $$
\begin{equation}
                                                      \label{6,21.4}
  +[(  4K_0^2 /\delta)+1]  \int_{0}^{t}e^{- 2 \phi_{s}}
|h ^{\cdot}_{s}|_{\ell_{2}(H)}^{2}\,ds +m_{t}
=:I_{t}+m_{t}
\end{equation}
and 
$$
 J:=  \sup_{t\leq T}|u_{t}|_{H}^{2}e^{-2  \phi_{t}}
 +(\delta/2)\int_{0}^{T}e^{-2  \phi_{s}}|u_{s}
 |_{V}^{2}\,ds +  \int_{0}^{T}\alpha_{s}
 |u_{s}|_{H}^{2}e^{-2  \phi_{s}}\,ds 
 $$
 \begin{equation*}
                                     \label{4.27.5}
 \leq 2I_{T}+2\sup_{t\leq T}|m_{t}|.
\end{equation*}

By the Davis inequality  
 $$
 E\sup_{t\leq T}|m_{t}|\leq
 8E\Big(\int_{0}^{T}e^{-4 \phi_{s}}\sum_{k}
 (u_{s},B^{k}_{s}u_{s}+h^{k}_{s})^{2}_{H}\,ds  \Big)^{1/2}
 $$
 $$
 \leq 8E\sup_{s\leq T}e^{-  \phi_{s}}|u_{s}|_{H}
 \Big(\int_{0}^{T}e^{-2 \phi_{s}}\big(
   2K_0^2 |u_{s}|_{V}^{2}+2|h^{\cdot}_{s}|^{2}_{\ell_{2}(H)}
 \big)\,ds\Big)^{1/2}
 $$
 $$
 \leq (1/8)E\sup_{s\leq T}e^{- 2 \phi_{s}}|u_{s}|^{2}_{H}
 +NE\int_{0}^{T}e^{-2  \phi_{s}}|h
 ^{\cdot}_{s}|^{2}_{\ell_{2}(H)}\,ds+N_{1}E\int_{0}^{T}e^{-2  \phi_{s}}|u_{s}
 |_{V}^{2}\,ds,
 $$
 where the last term we estimate by taking expectations
 in \eqref{6,21.4}. Then we see that
 $$
 2N_{1}E\int_{0}^{T}e^{-2  \phi_{s}}|u_{s}
 |_{V}^{2}\,ds\leq (2/\delta)N_{1}EI_{T}.
 $$
 $$
 EJ\leq (1/4)E\sup_{s\leq T}e^{- 2 \phi_{s}}|u_{s}|^{2}_{H}
 +NE\int_{0}^{T}e^{- 2 \phi_{s}}(|f^{*}_{s}|^{2}_{V^{*}}+\alpha^{-1}_{s}
 |f_{s}|^{2}_{H}+|h
 ^{\cdot}_{s}|^{2}_{\ell_{2}(H)})\,ds
 $$
 $$
 +N_{2}E\int_{0}^{T}e^{-2  \phi_{s}}
 |(u_{s},g_{s})_{H}|\,ds,
 $$
 where the last term is dominated by
$$
N_{2}\Big(E\sup_{s\leq T}e^{- 2 \phi_{s}}|u_{s}|^{2}_{H}\Big)^{1/2}
 \Big(E\Big(\int_{0}^{T}e^{-   \phi_{s}}
 |g_{s}|_{H}\,ds\Big)^{2}\Big)^{1/2}
$$
$$
\leq (1/4)E\sup_{s\leq T}e^{- 2 \phi_{s}}|u_{s}|^{2}_{H}
+NE\Big(\int_{0}^{T}e^{-   \phi_{s}}
 |g_{s}|_{H}\,ds\Big)^{2}.
$$
As a result we have
$$
 EJ\leq (1/2)E\sup_{s\leq T}e^{- 2 \phi_{s}}|u_{s}|^{2}_{H}
 +NE\int_{0}^{T}e^{- 2 \phi_{s}}(|f^{*}_{s}|^{2}_{V^{*}}+\alpha^{-1}_{s}
 |f_{s}|^{2}_{H}+|h
 ^{\cdot}_{s}|^{2}_{\ell_{2}(H)})\,ds
 $$
 $$
+NE\Big(\int_{0}^{T}e^{-   \phi_{s}}
 |g_{s}|_{H}\,ds\Big)^{2},
 $$
 and this yields  \eqref{4.27.1}. The theorem is proved.

 \begin{remark} 
                                     \label{remark 1.10.5.2024}
 Let Assumption \ref{assumption 4.18.1} hold with 
 $$
 \big(\sum_k|B^k_tu|^2_{H}\big)^{1/2}\leq K_0|u|_{V}+K_1|u|_H, 
 $$
for a constant $K_1$, in place of the the second inequality in \eqref{4.18.30}.
Then by inspecting the proof of
estimate 
\eqref{4.27.1} it is easy to see that the estimate
 remains valid with a constant $N=N(\delta, K_0)$ if 
in addition to the assumption on $\alpha$ we suppose that
 $\alpha\geq K_1^2$.                                           
\end{remark}                                                        
  
\mysection{Generalization of Theorem 
\protect\ref{theorem 4.18.1}}
                                  \label{section 4.20.1}

Here, in addition to the assumptions stated
in the beginning of Section \ref{section 4.18.1} and
 in Theorem \ref{theorem 4.18.1},
we suppose that, on $[0,T]\times\Omega$, we are also given
 predictable functions $\sca_{t}$ 
with values in the set $\sca=\cL(V,H)$ of linear operators 
from $V$ to $H$, $\sca^{\ast}_{t}$
with values in the set $\sca^{\ast}=\cL(H,V^{\ast})$ 
of linear operators mapping $H$ into $V^{\ast}$, 
 $\scc_{t}$ with values in the set $\scc=\cL(H,H)$ 
of linear operators on $H$,  
and $\scb_{t}$ with values in the set
$\scb=\cL(H,\ell_{2}(H))$ of linear operators 
from $V$ to $\ell_{2}(H)$
such that, for some {\em constants\/} $|\sca|,|\sca^{*}|,|\scb| 
 ,  |\scc| <\infty$, 
$$
  \int_{0}^{T} |\sca_{t} |^{2}_{\sca}\,dt 
  \leq  |\sca |^{2},
  \quad
  \int_{0}^{T} 
  |\sca^{\ast}_{t} |^{2}_{\sca^{\ast}}\,dt\leq  |\sca^{\ast} |^{2},
  \quad
   \int_{0}^{T} |\scb_{t} |^{2}_{\scb}\,dt 
  \leq  |\scb |^{2} ,
$$
 $$
  \int_{0}^{T} |\scc_{t} |_{\scc}\,dt 
  \leq  |\scc |.
 $$

 In this section we are dealing with the problem
$$
du_{t}=\big[ A_{t}u_{t} +\sca^{\ast}_{t}u_{t} +\sca_{t}u_{t}
 + \scc_{t}u_{t} 
+f^{*}_{t} + f_{t}+g_{t} \big]\,dt
$$
\begin{equation}
                                   \label{4.28.1}
+\big( B^{k}_{t}u_{t}+\scb^{k}_{t}u_{t} +h^{k}_{t}\big) \,dw^{k}_{t},\quad
t\leq T,\quad u_{t}\big|_{t=0}=u_{0}
\end{equation}
solution of which is defined   similarly
to Definition \ref{definition 4.25.1}.

\begin{definition}
                             \label{definition 6.21.1}
An $H$-valued, $\cF_{t}$-adapted,
$H$-continuous in $t$ function $u_{t}$ on $\Omega\times[0,T]$
is called an  
 $H$-solution of \eqref{4.28.1} if
$u_{\cdot}\in V$ for almost all $(\omega,t)$
$$
\int_{0}^{T}  | u_{t}  | _{V}^{2}\,dt<\infty
$$
and for any $ v \in V$, (a.s.) for all $t\in[0,T]$,  
$$
( v ,u_{t})_{H}=( v ,u_{0})_{H}+\sum_{k}\int_{0}^{t}
\big( v ,B^{k}_{s}u_{s}+\scb^{k}_{s}u_{s}+h^{k}_{s}\big)_{H}\,dw^{k}_{s}
$$
$$ 
+\int_{0}^{t}\big[\langle v ,A_{s}u_{s}
 +\sca^{\ast}_su_{s} 
+f^{*}_{s}\rangle
+( v ,\sca_{s}u_{s}
 +\scc_su_{s}  
+f_{s}+g_{s})_{H}\big]\,ds.
$$
\end{definition}

Take any $\varepsilon>0$ and choose a predictable
$\alpha_{t}\geq \varepsilon$ such that
$$
(16/\delta)|\sca_{s}|^{2}_{\sca}\leq \alpha_{s}, 
\quad
(32/\delta)|\sca_s^{\ast}|^{2}_{\sca^{\ast}}
+(2056+32K_0^2/\delta)|\scb_s|^2_{\scb}
+4|\scc_{s}|_{\scc}\leq  \alpha_{s},   
$$
\begin{equation}
                                                                                      \label{6.27.5.24}
\sup_{\Omega}\int_{0}^{T}\alpha_{t}\,dt<\infty.  
\end{equation}
Set
$$
\phi_{t}=\int_{0}^{t} \alpha_{s}\,ds.
$$
 
Recall Definition \ref{definition 4.20.1} of the Banach space $\bV$, and 
define also the Hilbert space $\bH$ of $H$-valued 
predictable functions $h_t$ on $[0,T]\times\Omega$ 
such that 
$$
|h|_{\bH}^2:=E\int_0^T|h_t|_{H}^2\,dt<\infty.
$$

\begin{theorem}
                               \label{theorem 4.20.1}
Under the above assumptions, for any $H$-valued 
$\cF_{0}$-measurable $u_{0}$
such that $E  | u_{0}  | ^{2}_{H}<\infty$, there exists
an $H$-solution of \eqref{4.28.1} such that

(i) $u_{\cdot} \in\bV $,

(ii) we have
\begin{equation}
                                        \label{4.21.1}
 |u_{\cdot}e^{- \phi_{\cdot}} |_{\bV}^{2}
  +|\alpha^{1/2}_{\cdot}u_{\cdot}e^{- \phi_{\cdot}}|^2_{\bH} 
\leq N E |u_{0} |^{2}_{H}+NJ,
\end{equation}
where 
$$
J=E\Big(\int_{0}^{T}e^{-  \phi_{t}} |g_{t} |_{H}\,dt\Big)^{2}
+E\int_{0}^{T}e^{-2 \phi_{t}}\Big( |f^{*}_{t}  |^{2}_{V^{*}}
+\alpha^{-1}_{t}|f_{t}|^{2}_{H}+
   | h^{\cdot}_{t}  | ^{2}_{\ell_{2}(H)}\Big)\,dt,
$$
and the constants $N$ depend only on $\delta,K_{ 0} $.

Moreover, if we have another $H$-solution 
$u'_{t}$ of
 \eqref{4.28.1} such that $u'_{\cdot}\in \bV$, then 
$$
P(\sup_{t\leq T}  | u_{t}-u'_{t}  | _{H}=0)=1.
$$
\end{theorem}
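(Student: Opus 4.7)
The plan is to solve \eqref{4.28.1} by Banach's fixed point theorem in the weighted space
\[
\bV_\phi := \bigl\{u \in \bV : \|u\|^2_\phi := |ue^{-\phi_\cdot}|^2_\bV + |\alpha^{1/2}_\cdot u_\cdot e^{-\phi_\cdot}|^2_\bH < \infty\bigr\},
\]
using Theorem \ref{theorem 4.18.1} as the linear solver. For $w\in\bV_\phi$, let $\Phi(w)$ be the $H$-solution supplied by Theorem \ref{theorem 4.18.1} to
\[
du = \bigl[Au + (f^* + \sca^*_tw) + (f + \sca_tw + \scc_tw) + g\bigr]\,dt + \bigl[Bu + (h^k + \scb^k_tw)\bigr]\,dw^k,\quad u|_{t=0}=u_0.
\]
The critical design choice is that $\scc_tw$ is routed to the $f$-slot (with $\alpha^{-1}$-weighted $L^2$-in-time integrability), not to the $g$-slot. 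By the assumed pointwise bound $|\scc_t|_\scc\le\alpha_t/4$ we then have $\alpha_t^{-1}|\scc_tw|^2_H\le(\alpha_t/16)|w|^2_H$, converting what would be an integrated bound into a pointwise one. Combined with $|\sca^*_t|^2\le(\delta/32)\alpha_t$, $|\sca_t|^2\le(\delta/16)\alpha_t$, and $|\scb_t|^2\le\alpha_t/(2056+32K_0^2/\delta)$ from the hypotheses, admissibility of the perturbed free terms is immediate and \eqref{4.27.1} yields $\Phi(w)\in\bV_\phi$.

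For the contraction, I apply \eqref{4.27.1} to $\Phi(w)-\Phi(w')$, which solves \eqref{4.24.1} with zero initial datum and free terms $\sca^*(w-w')$, $\sca(w-w')+\scc(w-w')$, $\scb(w-w')$ placed in the $f^*$-, $f$- and $h$-slots respectively. The pointwise operator bounds dominate each right-hand side of \eqref{4.27.1} by a fixed fraction of the matching piece of $\|w-w'\|^2_\phi$; for instance
\[
E\!\int_{0}^{T}\!\alpha^{-1}e^{-2\phi}|\sca_s(w-w')+\scc_s(w-w')|^2_{H}\,ds \le \tfrac{\delta}{8} E\!\int_{0}^{T}\!e^{-2\phi}|w-w'|^2_V\,ds + \tfrac{1}{8} E\!\int_{0}^{T}\!\alpha e^{-2\phi}|w-w'|^2_H\,ds.
\]
The specific constants $16/\delta$, $32/\delta$, $2056+32K_0^2/\delta$, $4$ in the conditions on $\alpha$ are tuned precisely so that, with the $N=N(\delta,K_0)$ of \eqref{4.27.1}, the total Lipschitz constant of $\Phi$ on $\bV_\phi$ is strictly less than $1$. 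Banach's fixed point theorem then furnishes the unique $H$-solution $u\in\bV_\phi$.

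The a priori estimate \eqref{4.21.1} follows from applying \eqref{4.27.1} to $u=\Phi(u)$ itself: the perturbation contributions on the right are dominated, by the same pointwise bounds, by a fraction of $\|u\|^2_\phi$ strictly less than $1$, and are absorbed into the left-hand side, leaving only $NE|u_0|^2_H+NJ$. Finally, any other $H$-solution $u'\in\bV$ automatically lies in $\bV_\phi$ (since $\sup_\Omega\phi_T<\infty$ renders the $\bV$- and $\bV_\phi$-norms equivalent up to a data-dependent constant), so uniqueness in $\bV$ follows from uniqueness of the fixed point.

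\textbf{Main obstacle.} The crux is the placement of $\scc_tw$. The naive choice of the $g$-slot yields, via Cauchy--Schwarz against the measure $|\scc_t|_\scc\,dt$, a Lipschitz contribution proportional to the deterministic integrated bound $|\scc|=\sup_\Omega\int_0^T|\scc_t|_\scc\,dt$, which is arbitrary and would block contraction for large data. Exploiting the \emph{pointwise} bound $|\scc_t|_\scc\le\alpha_t/4$ through the $\alpha^{-1}$-weighted $f$-slot converts that integrated bound into a pointwise one and rescues the single-shot Picard iteration on the whole of $[0,T]$.
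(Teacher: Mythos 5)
Your overall architecture (treat the $\sca,\sca^{*},\scb,\scc$ terms as free terms built from an input $w$, solve the resulting linear problem by Theorem \ref{theorem 4.18.1}, and look for a fixed point) is reasonable, and your observation about routing $\scc_{t}w$ through the $\alpha^{-1}$-weighted $f$-slot rather than the $g$-slot is a genuinely good one. But the pivotal quantitative claim --- that with the constants $16/\delta$, $32/\delta$, $2056+32K_{0}^{2}/\delta$, $4$ appearing in \eqref{6.27.5.24} the map $\Phi$ is a contraction on all of $[0,T]$ --- is asserted, not proved, and it does not follow from the stated hypotheses. Your pointwise bounds correctly give, e.g.,
$$
N E\int_{0}^{T}\alpha^{-1}e^{-2\phi}|\sca(w-w')+\scc(w-w')|^{2}_{H}\,ds
\leq \tfrac{N\delta}{8}E\int_{0}^{T}e^{-2\phi}|w-w'|^{2}_{V}\,ds
+\tfrac{N}{8}\,|\alpha^{1/2}(w-w')e^{-\phi}|^{2}_{\bH},
$$
where $N=N(\delta,K_{0})$ is the constant from \eqref{4.27.1}. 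For the Lipschitz constant of $\Phi$ to be below one you need, in particular, $N/8<1$; but $N$ is produced by the Davis-inequality bookkeeping in the proof of Theorem \ref{theorem 4.18.1} and is in no way guaranteed to be small (it typically grows like a power of $1/\delta$). The constants in \eqref{6.27.5.24} are calibrated for a different purpose: in the paper's direct a priori estimate the terms $2(u,\sca u)_{H}$, $2\langle u,\sca^{*}u\rangle$, etc.\ are absorbed into $-\delta|u|^{2}_{V}$ and $-2\alpha_{s}|u_{s}|^{2}_{H}$ \emph{before} any large constant enters, which is why factors like $\delta/16$ and $\alpha/4$ suffice there. The same unjustified step reappears in your derivation of \eqref{4.21.1}, where you absorb "a fraction strictly less than $1$" of $\|u\|^{2}_{\phi}$ from the right-hand side.

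The gap is repairable but not for free: either you must enlarge the constants in the lower bounds on $\alpha$ to depend on the $N$ of \eqref{4.27.1} (which changes the weight $\phi$ in the statement), or you must iterate on short time intervals, which your "single-shot" formulation explicitly forgoes. The paper avoids the issue entirely by (a) proving \eqref{4.21.1} first as an a priori estimate for any $H$-solution in $\bV$, by applying Theorem \ref{theorem 4.9.3} directly to \eqref{4.28.1} and performing the absorption at the level of the integrands, and (b) obtaining existence by the method of continuity in a parameter $\lambda$ multiplying $\sca,\sca^{*},\scb,\scc$, where the contraction is only required over a short parameter interval $|\lambda-\lambda_{0}|\leq(2N)^{-1/2}$ --- a condition that is satisfiable no matter how large $N$ is. You should either adopt that two-step structure or explicitly re-tune $\alpha$ as a function of $N(\delta,K_{0})$ and check that the resulting statement is still the one claimed.
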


 \begin{proof}
 First we prove \eqref{4.21.1} as an a priori estimate,
 that is assuming that we are given an $H$-solution $u_{t}$
 such that $u_{\cdot}\in \bV$. This a priori estimate,
 in particular, implies uniqueness, which is the last statement
 of the theorem.
To prove the estimate observe that  
\begin{equation}
                                        \label{4.27.5.24}
        E\Big(\int_{0}^{T} |\sca_{s}u_{s} |_{H}\,ds\Big)^{2}
\leq E\Big(\int_{0}^{T}|\sca_{s}|_{\sca}|u_{s}
 |_{V}\,ds\Big)^{2}
\leq |\sca |^{2} E \int_{0}^{T}  |u_{s}
 |^{2}_{V}\,ds<\infty,
\end{equation} 
$$
E \int_{0}^{T} |\sca^{\ast}_{s}u_{s} |^{2}_{V^{\ast}}\,ds 
\leq 
E \int_{0}^{T}|\sca^{\ast}_{s}|^{2}_{\sca^{\ast}}|u_{s}|^{2}_{H}\,ds 
\leq |\sca^{\ast} |^{2} E\sup_{s\leq T} |u_{s} |^{2}_{H} <\infty,
$$
$$
E \int_{0}^{T} |\scb^{\cdot}_{s}u_{s} |^{2}_{\ell_{2}(H)}\,ds 
\leq E \int_{0}^{T}|\scb_{s}|^{2}_{\scb}|u_{s}
 |^{2}_{H}\,ds 
\leq |\scb |^{2} E\sup_{s\leq T} |u_{s}
 |^{2}_{H} <\infty, 
$$
\begin{equation}
                                                                                 \label{5.27.5.24}
E\Big (\int_{0}^{T} |\scc_{s}u_{s} |\,ds\Big)^2 
\leq E \Big(\int_{0}^{T}|\scc_{s}|_{\scc}|u_{s}|_{H}\,ds\Big)^2 
\leq |\scc |^2 E\sup_{s\leq T} |u_{s}|^{2}_{H} <\infty.  
\end{equation}
Therefore, we can apply Theorem \ref{theorem 4.9.3} 
to \eqref{4.28.1}, and using 

 $$
 2\langle u_{s}, A_{s}u_{s} \rangle+\sum_{k}|B^{k}u_{s}|^{2}
 _{H}\leq -\delta |u_{s}|^{2}_{V},
 $$
 $$
 2(u_{s},f_{s}+\sca_su_s)_{H}\leq(\alpha_{s}/2)|u_{s}|_{H}^{2}
 +4\alpha^{-1}_{s}|f_{s}|^{2}_{H}
 +4\alpha^{-1}_{s}|\sca_{s}|^{2}_{\sca}|u_s|^2_{V},
 $$
 $$
 \leq (\alpha_{s}/2)|u_{s}|_{H}^{2}
 +4\alpha^{-1}_{s}|f_{s}|^{2}_{H}+(\delta/4)|u_s|^2_{V},
 $$
 $$
 2\langle u_{s},  f^{*}_{s}+\sca^{\ast}_{s}u_{s}\rangle
 \leq(\delta/8)|u|^{2}_{V}
 +(16/\delta)|f^{*}_{s}|^{2}_{V^{*}}
 +(16/\delta)|\sca_s^{\ast}|^{2}_{\sca^{\ast}}|u_{s}|^{2}_{H},
 $$
$$
\sum_{k}|B^{k}_{s}u_{s}+\scb_tu_s+h^{k}_{s}|^{2}_{H}-\sum_{k}
 |B^{k}_{s}u_{s}|^{2}_{H}
 $$
 $$\leq
  \sum_{k}2|B^{k}_{s}u_{s}|_{H}
 |h^{k}_{s}+\scb^k_tu_s|_{H}+ 2|h ^{\cdot}_{s}|_{\ell_{2}(H)}^{2}
 +2|\scb_s|^2_{\scb}|u_s|^2_H
 $$
 $$
 \leq  (\delta/8)|u_{s}|^{2}_{V}
 +[(16K_0^2/\delta)+4]|h ^{\cdot}_{s}|_{\ell_{2}(H)}^{2}
 +(16K_0^2/\delta)|\scb_s|^2_{\scb}|u_s|^2,
 $$
 $$
 2(u_{s},\scc_su_s)_{H}\leq 2|\scc_s|_{\scc}|u_s|^2_H,
 $$
we get, by taking into account \eqref{6.27.5.24},  that
 $$
   |u_{t}|_{H}^{2}e^{-2  \phi_{t}}
 +(\delta/2)\int_{0}^{t}e^{-2  \phi_{s}}|u_{s}
 |_{V}^{2}\,ds +  \int_{0}^{t}\alpha_{s}
 |u_{s}|_{H}^{2}e^{-2  \phi_{s}}\,ds $$
 $$
 \leq  \int_{0}^{t}
 e^{-2  \phi_{s}}\big((16/\delta)|f^{*}_{s}
 |_{V^{*}}^{2}+4\alpha^{-1}_{s}|f_{s}|^{2}_{H}\big)\,ds+
 2 \int_{0}^{t}e^{-2  \phi_{s}}
 |(u_{s},g_{s})_{H}|\,ds
 $$
\begin{equation}
                                       \label{1.27.5.24}
  +[(16K_0^2/\delta)+4]  \int_{0}^{t}e^{- 2 \phi_{s}}
|h ^{\cdot}_{s}|_{\ell_{2}(H)}^{2}\,ds +m_{t}, 
\end{equation}
 where
 $$
 m_{t}=  2\sum_{k}\int_{0}^{t}e^{-2  \phi_{s}}
 (u_{s},B^{k}_{s}u_{s}+\scb^{k}_su_s+h^{k}_{s})_{H}\,dw^{k}_{s}.
 $$
 
After that it suffices to repeat
 almost literally the proof of Theorem \ref{theorem 4.18.1}
 after \eqref{6,21.4} with only one change
 related to the fact that now in the estimate
 of $E\sup_{t\leq T}|m_{t}|$ the new term  
 $$
 512 E\int_{0}^{T}|\scb_s|^2_{\scb}|u_s|^2\,ds
 $$
 appears. 
 We estimate it by taking the expectations
 in \eqref{1.27.5.24} and using that $512\leq \alpha/4$.
 Then we follow the rest of the proof of Theorem
 \ref{theorem 4.18.1}. This proves \eqref{4.21.1}.

   Now we prove the existence of a solution to 
\eqref{4.28.1} by the method of continuity. 
For $\lambda\in[0,1]$
let ${\rm Eq}_{\lambda}(f,f^{\ast},g,h)$ denote the Cauchy problem 
\eqref{4.28.1} with $\lambda\sca_t$, 
$\lambda\sca^{\ast}_t$, $\lambda\scb_t$ 
and $\lambda\scc_t$ in place of  
$\sca_t$, $\sca^{\ast}_t$, $\scb_t$ 
and $\scc_t$, respectively. 
Let $S$ be the set of parameters $\lambda\in[0,1]$ such that  
${\rm Eq}_{\lambda}(f,f^{\ast},g,h)$ has a solution  
in $\bV$ for any predictable functions $f$, $f^{\ast}$, $g$, and $h$ 
on $[0,T]\times\Omega\times\bR^d$, 
with values in $H$, $V^{\ast}$, $H$ 
and $\ell_2(H)$, respectively, such that 
$$
E\Big(\int_{0}^{T}  |g_{t} |_{H}\,dt\Big)^{2}
+E\int_{0}^{T}\Big( |f^{*}_{t} |^{2}_{V^{*}}
+|f_{t}|^{2}_{H}+
 |h^{\cdot} _{t} |^{2}_{\ell_{2}(H)}\Big)\,dt<\infty. 
$$
Then $0\in S$ by Theorem \ref{theorem 4.18.1}.  
Fix $\lambda_0\in S$ and take a $\lambda\in[0,1]$. Then, 
due to \eqref{4.27.5.24} and \eqref{5.27.5.24}, for any $v\in\bV$ 
the equation 
\begin{align}
du_{t}=&\big[ A_{t}u_{t} +\lambda_0(\sca^{\ast}_{t}u_{t} +\sca_{t}u_{t}
+\scc_{t}u_{t})\big]\,dt 
+\big( B^{k}_{t}u_{t}+\lambda_0\scb^{k}_{t}u_{t}\big) \,dw^{k}_{t}   \nonumber\\
&+\big[(\lambda-\lambda_0)(\sca^{\ast}_{t}v_{t} +\sca_{t}v_{t}
+\scc_{t}v_{t})
+f^{*}_{t} + f_{t}+g_{t} \big]\,dt                                                \nonumber\\
&+\big( (\lambda-\lambda_0)
\scb^{k}_{t}v_{t} +h^{k}_{t}\big) \,dw^{k}_{t},
\quad
t\leq T,\quad u_{t}\big|_{t=0}=u_{0}                                          \nonumber 
\end{align}
has a unique solution $u=:Q_{\lambda}v\in\bV$. 
By estimate \eqref{4.21.1}, 
taking into account \eqref{4.27.5.24} and \eqref{5.27.5.24}, 
for any $v,v'\in\bV$
we have 
$$
|Q_{\lambda}v-Q_{\lambda}v'|^2_{\bV}
\leq N|\lambda-\lambda_0|^2
|v-v'|^2_{\bV}, 
$$
where $N$ is a constant depending only on $K_0$, $\delta$, 
$|\sca|$, $|\sca^{\ast}|$, $|\scb|$ and $|\scc|$. 
Thus  
for $\lambda\in[0,1]$ such that 
$|\lambda-\lambda_0|\leq (2N)^{-1/2}$,  
the operator $Q_{\lambda}$ is a contraction on $\bV$. 
Consequently, 
for these parameters $\lambda$ the operator $Q_{\lambda}$ 
has a fixed point $u^{\lambda}$, i.e.,  ${\rm Eq}_{\lambda}(f,f^{\ast},g,h)$ 
has a solution $u^{\lambda}\in\bV$.  
Hence it follows that $S=[0,1]$. In particular, $1\in S$, i.e., \eqref{4.28.1} 
has a solution $u\in\bV$,  
which finishes the proof of the theorem. 
 \end{proof}
 
\mysection{Stability property}
                                   \label{section 4.22.1}

Assume that 
for any $n=0,1,2,...$, $t\in[0,T],\omega\in\Omega$ 
we are given
linear operators 
$A_{t}^{n}:V\to V^{*}$, 
$B_{t}^{n}:V\to \ell_{2}(H)$, $\sca^{n}_{t}:V\to H$, 
 $\sca^{\ast n}_{t}:H\to V^{\ast}$,  
$\scb^{n}_{t}:H\to \ell_{2}( H)$, 
 $\scc^n:H\to H$  
such that, for each $v\in V$ and $h\in H$,
 $A_{t}^{n}v,B_{t}^{n}v,\sca^{n}_{t}v,
 \sca^{\ast n}_{t}v,  
 \scb^{n}_{t}h$, 
  and $\scc^{n}_{t}h$   
 are predictable.
Also assume that for any $n=0,1,2,...$ on $ \Omega\times[0,T]$
we are given a predictable $V^{*}$-valued function $f^{*n}_{t} $,
an $H$-valued function $f_{t}^{n}$ and $g^{n}_{t}$, and
  an $\ell_{2}(H)$-valued $h_{t}^{n}=(h^{nk}_{t}
   ,k=1,2,...)$. Here comes a more quantitative assumption.

\begin{assumption}
                                  \label{assumption 4.22.1}
(i) For any $n$, $A_{t}^{n},B_{t}^{n}$ satisfy
Assumption \ref{assumption 4.18.1} (with fixed $\delta,K$).   

(ii) We have
$$
\sup_{n}\int_{0}^{T}  |\sca_{t}^{n}  |^{2}_{\sca}\,dt\leq
  | \sca  |^{2},\quad 
  \sup_{n}\int_{0}^{T}  |\sca_{t}^{\ast n}  |^{2}_{\sca^{\ast}}\,dt\leq
  | \sca^{\ast}|^{2},\quad
  \sup_{n}\int_{0}^{T} 
   |\scb_{t}^{n}  | ^{2}_{\scb}\,dt\leq
  | \scb  | ^{2},
$$
$$
\sup_{n}\int_{0}^{T} |\scc_{t}^{n}  |_{\scc}\,dt\leq
  | \scc  |,
$$

\begin{equation*}
                                \label{4.22.4}  
 \sup_{n} \Big[E\Big(\int_{0}^{T}   | g_{t}^{n}  | _{H}\,dt\Big)^{2}
+E\int_{0}^{T}\Big(  | f^{*n}_{t}   | ^{2}_{V^{*}}+
| f^{n} _{t}   | ^{2}_{H}+
 | h^{n \cdot}_{t}   | ^{2}_{\ell_{2}(H)}\Big)\,dt\Big]                             
 <\infty,
\end{equation*}

(iii)         We have
$$
\lim_{n\to\infty}\Big[E\Big(\int_{0}^{T}
   | g_{t}^{n}-g_{t}^{0}  | _{H}\,dt\Big)^{2}
$$
\begin{equation*}
                                \label{4.22.40}
+E\int_{0}^{T}\Big(  | f^{*n}_{t} -f^{*0}_{t}  
 | ^{2}_{V^{*}}+| f^{ n}_{t} -f^{ 0}_{t}  
 | ^{2}_{H}+
  | h^{n \cdot}_{t} -h^{0 \cdot}_{t}   | ^{2}_{\ell_{2}(H)}\Big)\,dt\Big]=0.
\end{equation*}

(iv) For any $v_{\cdot}\in \bV$   we have 
$$
\lim_{n\to\infty}E\Big(\int_{0}^{T}  | (\sca^{n}_{t}
-\sca^{0}_{t})v_{t}  | _{H} 
\,dt\Big)^{2}=0,\,\,
\lim_{n\to\infty}E\int_{0}^{T}  | (\sca^{\ast n}_{t}
-\sca^{\ast 0}_{t})v_{t}  |^2 _{V^{\ast}} 
\,dt=0,
$$
\begin{equation}
                                                            \label{4.23.1}
\lim_{n\to\infty}E \int_{0}^{T}  | (\scb^{n}_{t}
-\scb^{0}_{t})v_{t}  |^{2} _{\ell_{2}(H)} 
\,dt =0,
\quad
\lim_{n\to\infty}E \Big(\int_{0}^{T}  | (\scc^{n}_{t}
-\scc^{0}_{t})v_{t}  | _{H}\,dt\Big)^2  =0
\end{equation}
and, for any $t\leq T$,  $v\in V'$ 
 for a dense subset 
$V'$ in $V$, 
\begin{equation} 
                                                              \label{5.10.6.24}
\lim_{n\to\infty}E\big(  | (A^{n}_{t}-A^{0}_{t})v 
 | ^{2}_{V^{*}}+
  | (B^{n}_{t}-B^{0}_{t})v  | ^{2}_{\ell_{2}(H)}\big)=0.
\end{equation}
\end{assumption}

\begin{theorem}
                                 \label{theorem 4.22.1}
Under the above assumptions, let $u_{0}^{n}\in 
L_{2}(\Omega,\cF_{0},H)$, $n=0,1,2,...$, be such that
$u_{0}^{n}\to u_{0}^{0}$ as $n\to\infty$ in $
L_{2}(\Omega,\cF_{0},H)$.
Denote by $u_{t}^{n}$
the functions from Theorem \ref{theorem 4.20.1}
corresponding to $u_{0}^{n}$, $A^{n}_{t},B^{n}_{t},
\sca^{n}_{t},\scb^{n}_{t},  \scc^{n}_{t}$,  $f^{*n}_{t},f^{n}_{t},g^{n}_{t}$,
and $h^{n}_{t}$.   
Then $  | u^{n}_{\cdot}-u^{0}_{\cdot}  | _{\bV}\to0$
as $n\to\infty$.

\end{theorem}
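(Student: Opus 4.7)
The plan is to write down the equation satisfied by the difference $v^n_t := u^n_t - u^0_t$ and to apply the a priori estimate \eqref{4.21.1} of Theorem~\ref{theorem 4.20.1} uniformly in $n$. Subtracting the identities from Definition~\ref{definition 6.21.1} for $u^n$ and $u^0$ and rewriting $A^n_t u^n_t - A^0_t u^0_t = A^n_t v^n_t + (A^n_t - A^0_t) u^0_t$ (and similarly for each of $\sca^n, \sca^{*n}, \scb^n, \scc^n, B^n$), one sees that $v^n$ solves the Cauchy problem of Section~\ref{section 4.20.1} with the $n$-th operators $A^n, \sca^{*n}, \sca^n, \scc^n, B^n, \scb^n$, initial datum $v^n_0 = u^n_0 - u^0_0$, and forcing terms
\begin{align*}
\tilde f^{*n}_t &:= (f^{*n}_t - f^{*0}_t) + (A^n_t - A^0_t) u^0_t + (\sca^{*n}_t - \sca^{*0}_t) u^0_t, \\
\tilde f^n_t &:= f^n_t - f^0_t, \\
\tilde g^n_t &:= (g^n_t - g^0_t) + (\sca^n_t - \sca^0_t) u^0_t + (\scc^n_t - \scc^0_t) u^0_t, \\
\tilde h^n_t &:= (h^n_t - h^0_t) + (B^n_t - B^0_t) u^0_t + (\scb^n_t - \scb^0_t) u^0_t.
\end{align*}
Since $u^0 \in \bV$ by Theorem~\ref{theorem 4.20.1}, each of these forcing terms lies in the function class required to apply that theorem again.

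Fix $\eta > 0$ and set
$$\alpha^n_s := \eta + (16/\delta)|\sca^n_s|^2_\sca + (32/\delta)|\sca^{*n}_s|^2_{\sca^*} + (2056 + 32 K_0^2/\delta)|\scb^n_s|^2_\scb + 4|\scc^n_s|_\scc.$$
By Assumption~\ref{assumption 4.22.1}(ii), $M := \sup_n \sup_\Omega \int_0^T \alpha^n_s\,ds$ is finite, so $e^{-\phi^n_s} \in [e^{-M}, 1]$ uniformly in $(n, s, \omega)$, where $\phi^n_s := \int_0^s \alpha^n_r\,dr$. Applying Theorem~\ref{theorem 4.20.1} to the $v^n$-equation and using $\alpha^n \geq \eta$ to replace $(\alpha^n)^{-1}$ by $\eta^{-1}$ in the $|\tilde f^n|^2_H$ term, I obtain
$$|v^n_\cdot|^2_\bV \leq N e^{2M} \Big( E|v^n_0|^2_H + E \Big(\int_0^T |\tilde g^n_t|_H\,dt\Big)^2 + E \int_0^T \big(|\tilde f^{*n}_t|^2_{V^*} + \eta^{-1} |\tilde f^n_t|^2_H + |\tilde h^n_t|^2_{\ell_2(H)}\big)\,dt\Big),$$
with $N = N(\delta, K_0)$. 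The contributions from $f^{*n} - f^{*0}$, $f^n - f^0$, $g^n - g^0$, $h^n - h^0$, and $u^n_0 - u^0_0$ vanish by Assumption~\ref{assumption 4.22.1}(iii) and the hypothesis on $u^n_0$, while the contributions from $(\sca^n - \sca^0) u^0$, $(\sca^{*n} - \sca^{*0}) u^0$, $(\scb^n - \scb^0) u^0$, $(\scc^n - \scc^0) u^0$ vanish by applying Assumption~\ref{assumption 4.22.1}(iv) with $v_\cdot = u^0_\cdot \in \bV$, the norms entering the estimate being precisely those controlled by (iv).

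The main obstacle is handling $(A^n - A^0) u^0$ and $(B^n - B^0) u^0$, since Assumption~\ref{assumption 4.22.1}(iv) supplies only pointwise convergence, in $L^2(\Omega)$ for each $t$, and only on a dense subset $V' \subset V$. I would argue by density. By Assumption~\ref{assumption 4.18.1} and Remark~\ref{remark 4.19.2}, the operators $A^n_t$ and $B^n_t$ map $V$ into $V^*$ and $\ell_2(H)$ with operator norms bounded by $K$ and $K_0$, uniformly in $(n, t, \omega)$. Approximate $u^0_\cdot$ in $L_2(\Omega \times [0, T], V)$ by predictable simple processes $u^{0,m}_t = \sum_j \mathbf{1}_{A_{m,j}}(\omega)\mathbf{1}_{[s_{m,j}, s_{m,j+1})}(t) w_{m,j}$ with $w_{m,j} \in V'$ and $A_{m,j} \in \cF_{s_{m,j}}$. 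For each fixed $m$, the pointwise convergence \eqref{5.10.6.24}, the $t$-uniform deterministic bound $|(A^n_t - A^0_t) w_{m,j}|^2_{V^*} \leq 4K^2 |w_{m,j}|^2_V$, and the dominated convergence theorem together give $E \int_0^T |(A^n_t - A^0_t) u^{0,m}_t|^2_{V^*}\,dt \to 0$, and likewise for $B$. A standard three-term approximation argument, absorbing the approximation error through the uniform bounds $|(A^n - A^0) v|_{V^*} \leq 2K|v|_V$ and $|(B^n - B^0) v|_{\ell_2(H)} \leq 2 K_0 |v|_V$, then yields
$$E\int_0^T |(A^n_t - A^0_t) u^0_t|^2_{V^*}\,dt + E\int_0^T |(B^n_t - B^0_t) u^0_t|^2_{\ell_2(H)}\,dt \to 0 \quad \text{as } n \to \infty,$$
which closes the argument and proves $|u^n_\cdot - u^0_\cdot|_\bV \to 0$.
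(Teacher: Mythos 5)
Your proposal is correct and follows essentially the same route as the paper: the same decomposition of $\Delta^n_t=u^n_t-u^0_t$ into an equation with the $n$-th operators and the same four modified forcing terms, followed by the uniform a priori estimate \eqref{4.21.1} of Theorem \ref{theorem 4.20.1}. The only difference is that you spell out the density/simple-process argument needed to pass from the fixed-$v$, fixed-$t$ convergence in \eqref{5.10.6.24} to $E\int_0^T|(A^n_t-A^0_t)u^0_t|^2_{V^*}\,dt\to0$, a step the paper leaves to the reader (after noting, as you do, that uniform boundedness extends \eqref{5.10.6.24} from $V'$ to all of $V$).
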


Proof. Note first that by (i) the operators 
$A^{n}_t$, $B^{n}_t$, $\sca^{n}_{t}$, $\sca^{*n}_{t}$,
$\scb^{n}_{t}$, $\scc^{n}_{t}$
are uniformly bounded in the respective spaces,
 and hence  \eqref{5.10.6.24} holds 
for every $v\in V$. Set $\Delta^{n}_{t}=u^{n}_{t}-
u^{0}_{t}$ and observe that, for any $v\in V$,
(a.s.) for all $t\in[0,T]$
$$
( v ,\Delta^{n}_{t})_{H}=( v ,\Delta^{n}_{0})_{H}
+\sum_{k}\int_{0}^{t}
\big( v ,B^{nk}_{s}\Delta^{n}_{s}+
\scb^{nk}_{s}\Delta^{n}_{s}+\bar h^{nk}_{s}\big)_{H}\,dw^{k}_{s}
$$
$$
+\int_{0}^{t}\big[\langle v ,A^{n}_{s}\Delta^{n}_{s}
+\bar f^{*n}_{s}\rangle
+( v ,\sca^{n}_{s}\Delta^{n}_{s}
 +\sca^{\ast n}_{s}\Delta^{n}_{s}
  +\scc^n \Delta^{n}_{s} 
+\bar f^{n}_{s}+\bar g^{n}_{s})_{H}\big]\,ds,
$$
where 
$$
\bar h^{nk}_{s}=(B^{nk}_{s}-B^{0k}_{s})u^{0}_{s}+
(\scb^{nk}_{s}-\scb^{0k}_{s})u^{0}_{s}+
 h^{nk}_{s}-
 h^{0k}_{s} ,
$$
$$
\bar f^{*n}_{s}=(A^{n}_{s}-A^{0}_{s})u^{0}_{s}
+(\sca^{\ast n}_{s}-\sca^{\ast 0}_{s})u^{0}_{s}
+
 f^{*n}_{s}-
 f^{*0}_{s} ,
$$
$$
\bar f^{n}_{s}=
 f^{ n}_{s}-
 f^{ 0}_{s},\quad \bar g^{n}_{s}=
 (\sca^{n}_{s}-\sca^{0}_{s})u^{0}_{s} 
  +(\scc^{n}_{s}-\scc^{0}_{s})u^{0}_{s} 
 +
 g^{n}_{s}-g^{0}_{s}.
$$

It follows by Theorem \ref{theorem 4.20.1} (with $\varepsilon=1$
in the construction of $\alpha$) that
$$
|\Delta^{n}_{\cdot}|^{2}_{\bV}
\leq NE\int_{0}^{T}\big(|\bar f^{*n}_{s}|^{2}_{V^{*}}
+|\bar f^{n}_{s}|^{2}_{H}+
|\bar h^{n\cdot}_{s}|^{2}_{\ell_{2}(H)}
\big)\,ds+NE\Big(\int_{0}^{T}|\bar g^{n}_{s}|_{H}
\,ds\Big)^{2},
$$
where the constants $N$ depend 
 only on $|\sca|,|\sca^{*}|,|\scb|, 
  |\scc|, 
 \delta, K_{ 0}$. 
 The reader will easily prove that the right-hand side here
 goes to zero as $n\to\infty$, by using our assumptions and
 the dominated convergence theorem. The theorem is proved.

\mysection{First application to the $L_{2}$-theory
of SPDEs}
                         \label{section 4.24.1}
                         
 We introduce some notation used throughout the rest of the paper. 
 We denote  by $\bR^d$ the $d$-dimensional Euclidean 
 space of points $x=(x^1,...,x^d)$. 
The Borel $\sigma$-algebra on $\bR^d$ is denoted by $\cB(\bR^d)$. 
The space of compactly supported smooth functions on $\bR^d$ is denoted by 
$C_0^{\infty}$. 
We use the notation $D_iu=\frac{\partial}{\partial x^i}u$ 
for generalised derivatives 
of locally integrable functions $u$ on $\bR^d$ 
with respect to the $i$-the coordinate $x^i$,    
$Dh=(D_{j}h;j=1,...,d)$, $D^{2}h=(D_{ij}h:=D_{i}D_{j}h
;ij=1,...,d)$.

Generally, if $\sigma(x)=(\sigma^{i...}_{j...}(x))$,
 by $|\sigma(x)|$ we mean the square root of the sum
 of squares of $\sigma^{i...}_{j...}(x)$, for instance,
 for a function $h$ with values
 $(h_{1},h_{2},...)$
 in a finite-dimensional or infinite-dimensional space, in
 $\bR^{n}$ or $\bR^{\infty}$, given on $\bR^{d}$
 we write
 $$
 |h(x)|^{2}=\sum_{i}|h_{i}(x)|^{2},
 $$ 
 $$
 |h|_{L_{p}}^{p}=\int_{\bR^{d}}|h(x)|^{p}\,dx
 =\int_{\bR^{d}}\Big|\sum_{i=1}^{\infty}|h_{i}(x)|^{2}\Big|^{p/2}\,dx.
 $$
 Of course, if the right-hand side is finite, we write
 $h\in L_{p}$. In particular, $Dh\in L_{p}$ if
$$
 |Dh|_{L_{p}}^{p}=\int_{\bR^{d}}|Dh(x)|^{p}\,dx
 =\int_{\bR^{d}}\Big|
 \sum_{i=1,j=1}^{\infty,d}|D_{j}h_{i}(x)|^{2}\Big|^{p/2}
 \,dx<\infty.
 $$
If $p=2$ in the previous sections
 we used a different notation $\|h\|_{\ell_{2}(H)}$
 for $|h|_{L_{2}}$. This was just because there
 $H$ was just an abstract Hilbert space.
 In the space of $\bR^n$- or $\bR^{\infty}$-functions
 the above notation looks more natural.

 For an integers $m\geq0$ the notation 
$W^m_p$ means the Sobolev space of functions $u\in L_p$ 
such that 
$$
|u|^p_{W^m_p}:=\sum_{k=0}^m|D^ku|^p_{L_p}<\infty, 
$$
where $ D^ku $ is   the collection of 
all generalised derivatives of $u$ of order $k$, $D^0u=u$.

In $\bR^{d}$ we consider the equation 
\begin{equation} 
                                          \label{eq1}
du_t=(\cL_tu_t+D_i\frf^i_t+f_t+g_{t})\,dt+
(\cM^k_tu_t+h^k_t)\,dw^k_t,\quad t\leq T
\end{equation}
with initial condition
\begin{equation}                                  \label{eq2}
u_{t}\big|_{t=0}=u_{0}, 
\end{equation}
where 
$$
\cL_tu_t= D_i(a^{ij}_{t}D_{j}u_t
+\beta^i_tu_t)+b^{i}D_{i} u_t +c_tu_t, 
\quad
\cM^k_tu_t=\sigma^{ik}_{t}D_{i}u_{t}+\nu^{k}_{t}u_{t}.  
$$
Here, and later on  we use the summation convention 
with respect to 
repeated integer valued indices.

The coefficients of the operators $\cL$ and $\cM^k$, and 
the free terms $\frf^i$, $f,g$, 
and $h^k$ in the above equation are assumed to be
$\cP\otimes\cB(\bR^d)$-measurable functions on
 $\Omega\times[0,\infty)\times\bR^d$. 
We assume that  the function $a=(a^{ij}_t(x))$ is   
$d\times d$ matrix-valued; 
$\beta=(\beta^i_t(x))$, $b=(b^i_t(x))$, $\sigma^{k}=(\sigma_t^{ik}(x))$, 
$\frf=(\frf^i_t(x))$ are $\bR^d$-valued;  
 $c=c_t(x)$, $f=f_t(x)$, $g=g_t(x)$, $\nu^k=\nu^k_t(x)$
and $h^k= h^k_t(x) $ are real-valued functions.   

\begin{definition}                                                     
                                     \label{definition L2}
Set $W^{0}_{2}=L_{2}$. For $i=0,1$ an $W^{i}_{2}$-valued
   function $u$ on 
$\Omega\times [0,T]$ is called an $W^{i}_{2}$-solution
 of  \eqref{eq1}--\eqref{eq2}
 if $u_{t}$ is a $W^{i}_{2}$-continuous $\cF_t$-adapted process,  
$$
u_t\in W^{i+1}_{2}\quad\text{for $P\otimes dt$-a.e. 
$(\omega,t)\in \Omega\times [0,T]  $,\quad 
$\int_{0}^{T}|u_t|_{W^{i+1}_{2}}^2\,dt<\infty $ (a.s.)},   
$$  
and, for any  $\varphi\in C^{\infty}_0(\bR^d)$, with probability one 
\begin{align}                                                                                              \label{solution}
(u_t,\varphi)=&(\psi,\varphi)+
\int_0^t(\sigma^{ik}_sD_iu_s+\nu^{k}_su_s+h^{k}_s,\varphi)\,dw^k_{s}\\     \nonumber
&+\int_0^t
\{(b^{i}_sD_iu_s+c_su_s+f_s+g_{s},\varphi)-(a^{ij}_sD_ju_s
+\beta^i_su_s+\frf^i_s,D_i\varphi) \}\,ds                                                 
\end{align}
for all $t\in[0, T]$,  where for functions $h,v$ 
on $\bR^d$ the notation $(h,v)$ means the 
integral of $hv$ over $\bR^d$ against the Lebesgue measure.  
\end{definition}

Fix some
$$
 r\in(2,d],\quad (d\geq3),\quad \rho_0\in(0,1],
$$  
and
use the notation $\bB_{\rho}$ for the set of 
balls in $\bR^d$ with radius $\rho$. 
\begin{definition}
Let $\alpha$ be 1 or $1/2$.
A real-, vector-, or tensor-valued function $f$ defined on 
$\Omega\times(0,\infty)\times\bR^d$ is called an 
$\alpha$-admissible 
function if $f=f^M+f^B$ with $\cP\otimes\cB(\bR^d)$-measurable 
functions $f^M$ and $f^B$,  
  there exits a {\em constant\/} $\hat f\geq0$ such that 
\begin{equation*} 
                                                           \label{morrey}
\left(\dashint_{B_{\rho}}|f_{t}^M(x)|^{\alpha r}
\,dx\right)^{1/r}\leq \hat f^{\alpha}\rho^{-1}
\quad\text{for $t>0$, $B_{\rho}\in\bB_{\rho}$, $\rho\leq \rho_0$}, 
\end{equation*}
and there exists a $\cP$-measurable bounded function $\bar f$ 
on $\Omega\times(0,\infty)$ such that 
$$
\sup_{x\in\bR^d}| f_t^B(x)|\leq \bar f_t 
\quad 
\text{for $t\geq0$, $\omega\in\Omega$,\,\,\,and\,\,\,}
 \sup_{\Omega}\int_0^{\infty}\bar f_t^{2\alpha}\,dt<\infty. 
$$
We say that a function is admissible
if it is   1-admissible.
\end{definition} 
\begin{example}
                             \label{example 4.29.1}
 One easily checks that, if $r<d$,
the function $1/|x|$ is admissible, 
$1/|x|^{2}$ is 1/2-admissible.
\end{example}

For $r\in[1,\infty)$ and $\lambda\geq0$ we denote by 
$E_{r,\lambda}$ the {\it Morrey space} of functions $f$ on $\bR^d$, with values 
in a Euclidean space, such that 
$$
|f|_{r,\lambda}:=\sup_{\rho\in(0,\rho_0],B\in\bB_{\rho}}\rho^{\lambda}
\left(\dashint_{B_{\rho}}|f(x)|^r\,dx\right)^{1/r}<\infty. 
$$
Notice that we do not change $|f|_{r,\lambda}$ 
if in its definition we take the supremum 
over balls of rational radius $\rho$ between $0$ 
and $\rho_0$, centred at points with 
rational coordinates. Hence, if $f$ is an 
$\cS\otimes\cB(\bR^d)$-measurable 
real-valued function on $\Theta\times\bR^d$ 
for a measurable space $(\Theta, \cS)$, 
such that $|f(\theta)|_{r,\lambda}<\infty$ for every $\theta\in\Theta$,  then 
$|f(\theta)|_{r,\lambda}$ is an $\cS$-measurable function of $\theta\in\Theta$. 
 
\begin{assumption}                                                      \label{assumption 1} 
There is a constant $\delta>0$ such that 
\begin{equation}
                            \label{4.30.1}
|a|\leq \delta^{-1},
\quad
(2a^{ij}-\sigma^{ik}\sigma^{jk})\lambda^i\lambda^j\geq \delta |\lambda|^2
\quad
\text{for all $\lambda\in\bR^d$},\omega,t.
\end{equation}
\end{assumption}

\begin{assumption}
                                                      \label{assumption 2} 
The functions $\beta$, $b$,  
  and $\nu$ are admissible, $c$ is 1/2-admissibble.
 
\end{assumption}

We thus allow rather singular $\beta,b,\nu$,
see Example \ref{example 4.29.1}.

\begin{assumption} 
                                                     \label{assumption 3}
The initial condition $u_{0}$ 
is in $L_{2}(\Omega,\cF_0, L_{2})$ and

\begin{equation}
E\int_0^{T}
(|\frf_s|^{2}_{L_{2}}
+|f_s|^{2}_{L_{2}}+|h _s|^{2}_{ L_{2} 
})\,ds+E\Big(\int_{0}^{T}  | g_{s}  | _{L_{2}}
\,ds\Big)^{2}<\infty.
\end{equation}
\end{assumption}

Let $\mu=(\mu_t)_{t\in[0,T]}$ be an arbitrary 
nonnegative predictable process such that 
\begin{equation}
                                               \label{6.24.5.24}
\sup_{\Omega}\int_0^T\mu_t\,dt<\infty.
\end{equation}
 
\begin{theorem}
                              \label{theorem 4.25.1}
Under the above assumptions there exists 
$\theta_{0}=\theta_{0}(d,\delta,r)\in(0,1]$ such that, if
\begin{equation}
                                   \label{4.25.3}
\hat b+\hat \beta+\hat c+\hat \nu\leq \theta_{0},
\end{equation}
then \eqref{eq1}--\eqref{eq2} has a unique
$L_{2}$-solution. For this solution we have
$$
E\sup_{t\leq T}  | u_{t}e^{- \phi_{t}}  | _{L_{2}}^{2}
+E\int_{0}^{T}
  |  u_{t}e^{- \phi_{t}}  | ^{2}_{W^{1}_{2}}\,dt
 +E\int_{0}^{T}
 \alpha_t | u_{t}e^{-\phi_{t}}  | ^{2}_{L_{2}}\,dt
 $$
 $$
\leq N  E| u_{0}  | ^{2}_{L_{2}}
+NE\int_0^{T}
|h _t e^{- \phi_{t}}|^{2}_{ L_{2}} \,dt
+
NE\Big(\int_{0}^{T}  
|   g_{t} e^{- \phi_{t}}  | _{L_{2}}
\,dt\Big)^{2}
$$
 \begin{equation}
                                          \label{4.25.5}
+NE\int_0^{T}
(|\frf_t e^{- \phi_{t}}|^{2}_{L_{2}}
+| f_{t}e^{- \phi_{t} }|^{2}_{L_{2}})\,dt ,
\end{equation}
where
\begin{equation*}                                                              \label{phi}
\phi_{t}= \int_{0}^{t}
 \alpha_s \,ds,
\end{equation*}
with 
$\alpha_t=\lambda(\bar b^2_{t}+\bar \beta^2_{t}+\bar \nu^2_{t}
+\bar c_{t}+\rho_{0}^{-2}+\delta)+\mu_t$, 
and  nonnegative (finite) constants $N=N(\delta)$ and 
$\lambda=\lambda(d,\delta,r)$. 
\end{theorem}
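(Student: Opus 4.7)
The plan is to cast \eqref{eq1}--\eqref{eq2} as an instance of the abstract equation \eqref{4.28.1} in the setting $V=W^{1}_{2}$, $H=L_{2}$, and then invoke Theorem~\ref{theorem 4.20.1} directly. For each singular lower-order coefficient write the admissibility decomposition $\beta=\beta^{M}+\beta^{B}$, $b=b^{M}+b^{B}$, $\nu=\nu^{M}+\nu^{B}$, $c=c^{M}+c^{B}$, so that the Morrey parts have small Morrey norms $\hat\beta,\hat b,\hat\nu,\hat c\leq\theta_{0}$ while the bounded parts are pointwise dominated by the predictable envelopes $\bar\beta_{t},\bar b_{t},\bar\nu_{t},\bar c_{t}$ whose prescribed $L^{2}$- or $L^{1}$-integrals in $t$ are finite.

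The Morrey pieces, together with the principal second-order terms, are packaged into the coercive operators
\begin{equation*}
\langle v,A_{t}u\rangle=-\int(a^{ij}_{t}D_{j}u+\beta^{iM}_{t}u)D_{i}v\,dx+\int(b^{iM}_{t}D_{i}u+c^{M}_{t}u)v\,dx,\quad B^{k}_{t}u=\sigma^{ik}_{t}D_{i}u+\nu^{kM}_{t}u,
\end{equation*}
and this is where the main obstacle sits: one has to recover Assumption~\ref{assumption 4.18.1}. Boundedness is easy, but for coercivity the strong-parabolicity bound from Assumption~\ref{assumption 1} produces the leading $-\delta|\nabla u|^{2}_{L_{2}}$, and one has to absorb the Morrey contributions into half of this. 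The engine is a Fefferman--Phong / Morrey--Sobolev type inequality, whose model case is the classical Hardy inequality behind Example~\ref{example 4.29.1}: for an admissible $\beta$ and a $1/2$-admissible $c$,
\begin{equation*}
\int_{\bR^{d}}|\beta^{M}_{t}|^{2}u^{2}\,dx\leq N\big(\hat\beta^{2}|\nabla u|^{2}_{L_{2}}+\rho_{0}^{-2}\hat\beta^{2}|u|^{2}_{L_{2}}\big),\quad \int_{\bR^{d}}|c^{M}_{t}|u^{2}\,dx\leq N\big(\hat c\,|\nabla u|^{2}_{L_{2}}+\rho_{0}^{-2}\hat c\,|u|^{2}_{L_{2}}\big),
\end{equation*}
with $N=N(d,\delta,r)$; analogous bounds cover the $b^{M}D_{i}u$ and $\nu^{M}u$ terms via Cauchy--Schwarz. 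Plugging these into $2\langle v,A_{t}v\rangle+\sum_{k}|B^{k}_{t}v|^{2}_{H}$ and using $\hat\beta+\hat b+\hat\nu+\hat c\leq\theta_{0}$ together with $NK_{0}\theta_{0}\leq\delta/2$, the $|\nabla v|^{2}_{L_{2}}$-parts of the Morrey corrections are swallowed by half the principal term, while the $|v|^{2}_{L_{2}}$-parts produce the $\rho_{0}^{-2}$ contribution that eventually appears inside $\alpha_{t}$. Choosing $\theta_{0}=\theta_{0}(d,\delta,r)$ sufficiently small yields \eqref{4.18.2} with $\delta$ replaced by $\delta/2$ after moving an $|v|^{2}_{L_{2}}$-multiple into the zeroth-order operator $\scc$.

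The bounded parts are placed into the auxiliary operators of Section~\ref{section 4.20.1}: $\sca_{t}u:=b^{iB}_{t}D_{i}u$ maps $V\to H$ with $|\sca_{t}|_{\sca}\leq\bar b_{t}$; $\sca^{\ast}_{t}u:=D_{i}(\beta^{iB}_{t}u)$ maps $H\to V^{\ast}$ with $|\sca^{\ast}_{t}|_{\sca^{\ast}}\leq\bar\beta_{t}$; $\scb^{k}_{t}u:=\nu^{kB}_{t}u$ maps $H\to\ell_{2}(H)$ with $|\scb_{t}|_{\scb}\leq\bar\nu_{t}$; and $\scc_{t}u:=c^{B}_{t}u$ maps $H\to H$ with $|\scc_{t}|_{\scc}\leq\bar c_{t}$, plus the $|v|^{2}_{L_{2}}$-correction produced by the Morrey absorption, which adds a term controlled by $\rho_{0}^{-2}$. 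The $L^{2}$- or $L^{1}$-in-$t$ integrability of these norm bounds is exactly the content of the admissibility definition, giving the finite constants $|\sca|,|\sca^{\ast}|,|\scb|,|\scc|$ required in Section~\ref{section 4.20.1}.

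Finally, pick $\alpha_{t}$ as in the statement, i.e.\ a large constant multiple of $\bar b^{2}_{t}+\bar\beta^{2}_{t}+\bar\nu^{2}_{t}+\bar c_{t}+\rho_{0}^{-2}+\delta$ plus $\mu_{t}$, ensuring simultaneously \eqref{6.27.5.24} and the condition of Remark~\ref{remark 1.10.5.2024}. Theorem~\ref{theorem 4.20.1} then produces a unique $H$-solution $u\in\bV$ that coincides with an $L_{2}$-solution in the sense of Definition~\ref{definition L2}, and its estimate \eqref{4.21.1} specialises to \eqref{4.25.5} once the abstract norms are rewritten in terms of $L_{2}$ and $W^{1}_{2}$. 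Beyond the Morrey--Sobolev absorption, which is the one delicate step, the argument is purely bookkeeping on top of the already established abstract theorem.
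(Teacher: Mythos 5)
Your proposal is correct and follows essentially the same route as the paper: Morrey parts of $\beta,b,c,\nu$ go into the coercive pair $(A_t,B_t)$, bounded parts into $\sca_t,\sca^{\ast}_t,\scb_t,\scc_t$, coercivity is recovered via the Morrey--Sobolev absorption of Lemma~\ref{lemma 12.17.1} together with the shift of a zeroth-order multiple $(N_0\rho_0^{-2}+\delta)v$ from $A_t$ into $\scc_t$, and the conclusion is read off from Theorem~\ref{theorem 4.20.1} and Remark~\ref{remark 1.10.5.2024}. The only cosmetic difference is that you present the key absorption inequality as something to be proved, whereas the paper imports it ready-made from \cite{K2022b}.
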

 
We derive this theorem from Theorem
\ref{theorem 4.20.1} after some preparations.
First we need Lemma 2.4 of \cite{K2022b} in which
for functions $u,v$ on $\bR^{d}$
we write $u\prec v$ if the integral of $u$
over $\bR^{d}$ is less than or equal to that
of $v$. If the integrals are equal, we write
$u\sim v$.   
 
 \begin{lemma}
                          \label{lemma 12.17.1}
(i) If $f$ is admissible, then for any $t$ and $u
\in C^{\infty}_{0}$ we have
\begin{equation*}
                               \label{12.17.1}
|f_{t}|^{2}|u|^{2}\prec N(d,r)\hat f^{2}|Du|^{2}
+\big(N(d,r)\rho_{0}^{-2}\hat f^{2}+2  \bar f^{2}_{t} \big)|u|^{2}.
\end{equation*}

(ii) If $f$ is 1/2-admissible, then for any $t$ and $u,v
\in C^{\infty}_{0}$ we have
\begin{equation}
                               \label{12.17.1b}
|f_{t}^{M}| u^{2}  \prec N(d,r)\hat f |Du|^{2}
+ N(d,r)\rho_{0}^{-2}\hat f  |u|^{2}.
\end{equation}
\end{lemma}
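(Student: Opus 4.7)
The plan is to split $f=f^M+f^B$ and treat each piece separately. The bounded remainder $f^B$ contributes trivially: the pointwise bound $|f^B_t(x)|\leq \bar f_t$ gives $|f^B_t|^2|u|^2\leq \bar f_t^2|u|^2$ everywhere, so in view of $|f|^2\leq 2|f^M|^2+2|f^B|^2$ the task in (i) reduces to proving the Morrey-type absorption inequality
\[
\int_{\bR^d}|f_t^M|^2 |u|^2\,dx\leq N(d,r)\hat f^{2}\int |Du|^2\,dx + N(d,r)\rho_0^{-2}\hat f^{2}\int |u|^2\,dx.
\]
(The factor $2$ from the decomposition of $|f|^2$ is absorbed into $N(d,r)$, leaving the $\bar f_t^2|u|^2$ term with its required coefficient $2$.)

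I would establish this displayed inequality by localization at the admissibility scale $\rho_0$. Cover $\bR^d$ by balls $B_{\rho_0}(x_i)$ with bounded overlap $K=K(d)$, and take smooth cutoffs $\{\zeta_i\}$ with $\mathrm{supp}\,\zeta_i\subset B_{\rho_0}(x_i)$, $\sum_i\zeta_i^2\equiv1$, and $|D\zeta_i|\leq C/\rho_0$. Then
\[
\int |f_t^M|^2 u^2\,dx = \sum_i \int |f_t^M|^2(\zeta_i u)^2\,dx,
\]
and each $v_i:=\zeta_i u\in C_0^\infty(B_{\rho_0}(x_i))$ is supported on a ball where admissibility gives $\int_{B_{\rho_0}(x_i)}|f_t^M|^r\,dx \leq c(d)\hat f^{r}\rho_0^{d-r}$. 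On each such ball, a H\"older together with a Sobolev-Poincar\'e-type bound yields
\[
\int|f_t^M|^2 v_i^2\,dx \leq N(d,r)\hat f^{2}\int|Dv_i|^2\,dx + N(d,r)\rho_0^{-2}\hat f^{2}\int v_i^2\,dx,
\]
and summing over $i$ via $|Dv_i|^2\leq 2\zeta_i^2|Du|^2+C\rho_0^{-2}u^2$ and the bounded overlap collapses the right-hand sides back to global integrals of $|Du|^2$ and $u^2$.

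The main obstacle is the per-ball H\"older+Sobolev step when $r<d$: the natural H\"older exponent $q=2r/(r-2)$ then exceeds the Sobolev critical exponent $2d/(d-2)$, so the embedding $W^{1,2}_0(B_{\rho_0})\hookrightarrow L^q$ is unavailable. The remedy is the Fefferman--Phong trace inequality, which applies because $|f_t^M|^2$ lies in the Morrey space $L^{r/2,\,d-r}$ with $r/2>1$ and satisfies $\rho^2(\dashint_{B_\rho}|f_t^M|^r)^{2/r}\leq \hat f^{2}$ for every $\rho\leq\rho_0$. This yields the absorption bound $\int V w^2\leq C\hat f^{2}\int |Dw|^2$ for compactly supported $w$, via a dyadic/maximal-function argument based on the pointwise Sobolev representation $|w(x)|\leq C\int|Dw(y)||x-y|^{1-d}\,dy$; the $\rho_0^{-2}$ lower-order correction accounts for the scale beyond which the Morrey hypothesis is not imposed.

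Part (ii) is obtained by repeating the covering and absorption scheme verbatim with $|f_t^M|$ in place of $|f_t^M|^2$. The $1/2$-admissibility delivers $\int_{B_{\rho_0}}|f_t^M|^{r/2}\,dx\leq c(d)\hat f^{r/2}\rho_0^{d-r}$, so H\"older with exponents $r/2$ and $r/(r-2)$ followed by the same Fefferman--Phong-type absorption produces the claim with $\hat f$ in place of $\hat f^{2}$. The decomposition $f=f^M+f^B$ is not invoked because the statement only involves $f^M$.
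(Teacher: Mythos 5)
Your proof is correct, and in fact the paper does not prove this lemma at all: it is quoted verbatim as Lemma 2.4 of \cite{K2022b}, so there is no in-paper argument to compare against. Your route --- splitting off the bounded part $f^{B}$ (which accounts exactly for the coefficient $2\bar f_{t}^{2}$), localizing at scale $\rho_{0}$ with a partition of unity satisfying $\sum_{i}\zeta_{i}^{2}=1$ and $|D\zeta_{i}|\leq C\rho_{0}^{-1}$ (which produces the $\rho_{0}^{-2}$ term), and then absorbing $\int |f^{M}_{t}|^{2}w^{2}$ via the Fefferman--Phong/Chiarenza--Frasca trace inequality for potentials in the Morrey class with integrability exponent $r/2>1$ --- is precisely the standard argument behind the cited result, and your observation that plain H\"older plus Sobolev fails for $r<d$ (so that the Fefferman--Phong step is genuinely necessary) is accurate. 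The same applies to part (ii) with $|f^{M}_{t}|$ in place of $|f^{M}_{t}|^{2}$ and the $1/2$-admissibility supplying the Morrey bound with exponent $r/2$.
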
 
 
\begin{corollary}
                                                              \label{corollary bDu}
Inequalities \eqref{12.17.1} and \eqref{12.17.1b} imply 
that for admissible $b=(b^1,...,b^d)$, 1/2-admissible $c$  and functions 
$u,v\in C_0^{\infty}$ we have 
$$
(v, b_t^{Mi}D_i u)\leq N\hat b(|Dv|_{H}
+\rho_0^{-1}|v|_{H})|Du|_{H},   
$$
$$
|(u,c_{t}^{M}v)|\leq\big||c_{t}^{M}|^{1/2}u\big|_{H}\big|
|c_{t}^{M}|^{1/2}v\big|_{H}
$$
\begin{equation*}
\leq N
\hat c\big(|Du|_{H}+ \rho_{0}^{-1}
  |u|_{H}\big)\big(|Dv|_{H}+ \rho_{0}^{-1}
  |v|_{H}\big)
\end{equation*}
with constants $N=N(d,r)$, where $H=L_{2}$.
\end{corollary}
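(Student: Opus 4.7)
The plan is to derive each bound from a Cauchy--Schwarz step followed by a direct application of Lemma \ref{lemma 12.17.1}. Throughout, $H=L_{2}$ and $|\cdot|_{H}$ denotes the $L_{2}(\bR^{d})$-norm.

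For the first inequality, I would start from the pointwise bound $|v\,b^{Mi}_{t}D_{i}u|\leq|v|\,|b^{M}_{t}|\,|Du|$ and then apply the Cauchy--Schwarz inequality in $L_{2}(\bR^{d})$ to the two factors $|b^{M}_{t}||v|$ and $|Du|$, obtaining
\begin{equation*}
(v,b^{Mi}_{t}D_{i}u)\leq\Big(\int_{\bR^{d}}|b^{M}_{t}|^{2}v^{2}\,dx\Big)^{1/2}|Du|_{H}.
\end{equation*}
To control the first factor I would observe that $b^{M}$ can itself be treated as an admissible function with Morrey part $b^{M}$, bounded part $0$, and the same constant $\hat b$; applying Lemma \ref{lemma 12.17.1}(i) to this function (so that the $2\bar f^{2}_{t}$ contribution vanishes) gives
\begin{equation*}
\int_{\bR^{d}}|b^{M}_{t}|^{2}v^{2}\,dx\leq N(d,r)\hat b^{2}\bigl(|Dv|^{2}_{H}+\rho_{0}^{-2}|v|^{2}_{H}\bigr)\leq N(d,r)\hat b^{2}\bigl(|Dv|_{H}+\rho_{0}^{-1}|v|_{H}\bigr)^{2},
\end{equation*}
and taking the square root produces the claimed estimate.

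For the second chain, the leftmost inequality is pure Cauchy--Schwarz in $L_{2}(\bR^{d})$ applied to the factorisation $c^{M}_{t}\,u\,v=\sign(c^{M}_{t})\cdot|c^{M}_{t}|^{1/2}u\cdot|c^{M}_{t}|^{1/2}v$, yielding
\begin{equation*}
|(u,c^{M}_{t}v)|\leq\bigl||c^{M}_{t}|^{1/2}u\bigr|_{H}\bigl||c^{M}_{t}|^{1/2}v\bigr|_{H}.
\end{equation*}
For the rightmost inequality I would apply Lemma \ref{lemma 12.17.1}(ii) to the $1/2$-admissible function $c$ with test functions $u$ and $v$ separately, obtaining
\begin{equation*}
\int_{\bR^{d}}|c^{M}_{t}|u^{2}\,dx\leq N(d,r)\hat c\bigl(|Du|_{H}+\rho_{0}^{-1}|u|_{H}\bigr)^{2}
\end{equation*}
and the analogous bound with $v$. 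Multiplying the two resulting square-root estimates combines the two factors of $\hat c^{1/2}$ into a single $\hat c$ and yields the stated right-hand bound with constant depending only on $d,r$.

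The argument is essentially a routine \emph{Cauchy--Schwarz plus apply-the-lemma} calculation with no serious obstacle; the only conceptual point worth flagging is the observation that the Morrey part $b^{M}$ of an admissible $b$ is itself admissible with zero bounded component, so that in part (i) of Lemma \ref{lemma 12.17.1} the term $2\bar f^{2}_{t}|u|^{2}$ drops out. This is what makes the right-hand side of the first bound of the corollary depend only on $\hat b$ (and not on $\bar b$), exactly as needed when these estimates are later used to place the drift terms inside the coercivity budget.
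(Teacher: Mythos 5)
Your argument is correct and is exactly the route the paper intends: the corollary is stated as an immediate consequence of Lemma \ref{lemma 12.17.1}, and the paper's own inline use of it (see \eqref{4.30.3} and \eqref{4.30.4}) is precisely your Cauchy--Schwarz step followed by the lemma. Your remark that $b^{M}$ can be viewed as admissible with zero bounded part (so the $2\bar f_{t}^{2}$ term in part (i) of the lemma disappears and only $\hat b$ survives) is the right way to bridge the small gap between the lemma, which is stated for the full admissible $f$, and the corollary, which concerns only $b^{M}$.
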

 
{\bf Proof of Theorem \ref{theorem 4.25.1}}.
In Section \ref{section 4.18.1} set $H=L_{2},V=W^{1}_{2}$.
Define $A_{t}:\,\,V\to V^{*}$ by requiring
\begin{equation}
                                                                \label{1.10.6.24}
\langle u,A_{t}v\rangle=
(u,b^{Mi}_tD_iv +c_{t}^{M}v-N_{0}\rho_{0}^{-2}v
- \delta v  )-
(D_iu, a^{ij}_tD_jv 
+\beta^{Mi}_{t}v  ) 
\end{equation}
to hold for any $u,v\in V$,
where $(\cdot,\cdot)$ is the scalar product in $L_{2}$,
and the constant $N_{0}$, depending only
 on $d,r$,   and $\delta$, will be specified later.
To show that such $A_{t}$ is well defined observe that,  
by virtue of Corollary \ref{corollary bDu}
\begin{equation}
                                \label{4.30.3}
|(u,b^{Mi}_tD_iv)|\leq   |Dv |_{H}
|\,|b^{M}_{t}|u|_{H}
\leq N(d,r) |Dv |_{H}\hat b\big(|Du|_{H}+ \rho_{0}^{-1}
  |u|_{H}\big).
\end{equation}
Similarly, $|(D_{i}u,\beta^{Mi}_{t}v)|\leq 
N(d,r )|Du|_{H}\hat \beta\big(|Dv|_{H}+ \rho_{0}^{-1}
  |v|_{H}\big)$, 
and
$$
|(u,c_{t}^{M}v)|\leq\big||c_{t}^{M}|^{1/2}u\big|_{H}\big|
|c_{t}^{M}|^{1/2}v\big|_{H}
$$
\begin{equation}
                                \label{4.30.4}
\leq N(d,r)
\hat c\big(|Du|_{H}+ \rho_{0}^{-1}
  |u|_{H}\big)\big(|Dv|_{H}+ \rho_{0}^{-1}
  |v|_{H}\big),
\end{equation}
$$
(D_iu, a^{ij}_tD_jv )\leq N(d,\delta)
|Dv|_{H}|Du|_{H}.
$$
Also let $B_{t}:V\to\ell_{2}(H)$ be defined by
\begin{equation}
                                                     \label{2.10.6.24}
B^{k}_{t}v=\sigma^{ik}_tD_iv +\nu^{Mk}_tv. 
\end{equation}
Due to Lemma \ref{lemma 12.17.1} we have
$$
|B_{t}v|_{H}\leq N(\delta)|Dv|_{H}+
N(d,r ) \hat\nu \big(|Dv|_{H}+ \rho_{0}^{-1}
  |v|_{H}\big),
$$
where, according to the notation
introduced in the beginning of the section,
$$
|B_{t}v|^{2}_{H}=|B_{t}v|^{2}_{L_{2}}=\sum_{i=k}^{\infty}
\int_{\bR^{d}}|B^{k}_{t}v(x)|^{2}\,dx.
$$
Next, let
\begin{equation}  
                                 \label{3.10.6.24} 
\sca_{t}v=b^{Bi}_tD_iv, 
\quad
 \scc_tv=c_{t}^{B}v
+ N_{0}\rho_{0}^{-2} v + \delta v, 
\quad 
\scb^{k}_{t}v=\nu^{Bk}_sv ,
\end{equation} 
then
$$
|\sca_{t}v|_{H}\leq  \bar b_t |v|_{V} ,
\quad
 |\scc_tv|_{H}\leq (\bar c_t+N_0\rho_0^{-2}+\delta)|v|_H, 
\quad |\scb _{t}v|_{H}\leq \bar \nu_{t}|v|_{H}, 
$$
$$
|\sca_{t}|_{\sca} \leq \bar b_{t},
\quad
|\scc_{t}|_{\scc} \leq\bar c_{t}+N_{0}\rho_{0}^{-2}+\delta,
\quad 
|\scb_{t}|_{\scb}\leq \bar \nu_{t}.
$$
 Define also $\sca^{\ast}_{t}:H\to V^{\ast}$ such that  
\begin{equation}
                                 \label{6,26.1}
\langle u,\sca^{\ast}_{t}v\rangle=(D_iu, \beta^{Bi}_tv)_H
\end{equation}
holds for all $u\in V$ and $v\in H$. Then 
$$
(D_iu, \beta^{Bi}_tv)_H\leq \bar\beta_t |u|_V||v|_H, 
\quad 
|\sca^{\ast}_{t}|_{\sca^{\ast}}\leq  \bar\beta_t. 
$$
 
Finally, for $v\in V$, due to \eqref{4.30.1},
\eqref{4.30.3}, and \eqref{4.30.4} (with $N_{i}=N_{i}(d,
\delta,r)$ and assuming 
apriori that $\hat b, \hat c,
\hat \beta ,\hat\nu  \leq 1$)
$$
 2\langle v, A_{t}v\rangle+|B _{t}v|^{2}_{ 
H }\leq -\delta|Dv|^{2}_{H}-2N_{0}\rho_{0}^{-2}|v|_{H}^{2}
 -2\delta|v|^2_{H} 
$$
$$
+N (\hat b+\hat \beta+\hat c)|Dv|^{2}_{H}
+N \rho_{0}^{-2}|v|_{H}^{2}+2
|\sigma^{i\cdot}_sD_iv|_{H}|\nu^{M }_{s}v|
_{H}
$$
$$
+|\nu^{M }_{s}v|_{H}^{2}
 -2\delta|v|^2_{H} 
\leq -(\delta/2)|Dv|^{2}_{H}-
2N_{0}\rho_{0}^{-2}|v|_{H}^{2}
$$
$$
+N_{1} (\hat b+
\hat \beta+\hat c+\hat \nu )|Dv|^{2}_{H}
+N_{2} \rho_{0}^{-2}|v|_{H}^{2}
 -2\delta|v|^2_{H}. 
$$
For $N_{2} \leq 2N_{0}$ and
$$
N_{1} (\hat b+
\hat \beta+\hat c+\hat \nu )
\leq \delta/4
$$
we get
$$
 2\langle v, A_{t}v\rangle+|B _{t}v|^{2}_{ 
H }\leq -(\delta/4)|Dv|^{2}_{H}
 -2\delta|v|^2_{H}\leq  -(\delta/4)|v|^{2}_{V},
$$
$$
|B_tv|_{ H }^2\leq N(\delta)|Dv|^2_H+N_2\rho^{-2}|v|^2_H.
$$
After these manipulations Theorem \ref{theorem 4.25.1}
follows immediately  from Theorem
\ref{theorem 4.20.1} 
and Remark \ref{remark 1.10.5.2024}. 

 \begin{example}
                          \label{example 5.12.1}
It turns out that generally $\hat b$ should be  small.
To show this   consider the
function
$$
u_{t}(x)=e^{-|x+w_{t}|^{2}/(2t)},\quad t> 0, \quad u_0=0, 
$$
where $w_{t}$ is a $d$-dimensional Wiener process.
This function satisfies
$$
du_{t}=\big(\Delta u_{t}-\frac{d}{|x+w_{t}|^{2}}(x+w_{t})^{i}D_{i}u_{t}\big)
\,dt +D_{k}u_{t}\,dw^{k}_{t}.
$$
 Also,  
$$
\int_{\bR^{d}}|u_{t}|^{2}\,dx=N(d)t^{d/2},\quad 
\int_{\bR^{d}}|Du_{t}|^{2}\,dx=N(d)t^{d/2-1},
$$
so that $u$ satisfies the requirements in Theorem 
\ref{theorem 4.25.1}. In addition, as we mentioned
above, $|b|$ is admissible. However, \eqref{4.25.5} fails,
which shows that the constant factor $d$ in $b$
is not sufficiently small.

\end{example}

\begin{remark} 
                   \label{remark 1.5.1}

 In the literature  a  very popular condition
on $ b$   is that
$b\in  L_{p ,q  }((0,T)\times\bR^{d})$, that is
$b_{t}(x)=b(t,x)$ (nonrandom) and
\begin{equation}
                                                              \label{1.5.01}
\|b\|_{ L_{p ,q }((0,T)\times\bR^{d})}=
\Big(\int_{0}^{T}\Big(\int_{\bR^{d}}|b (t,x)|^{p}
\,dx\Big)^{q /p }\,dt\Big)^{1/q }<\infty
\end{equation}
with $p ,q \in[2,\infty]$ satisfying
\begin{equation}
                                                              \label{9.25.1}
\frac{d}{p }+\frac{2}{q }=1,
\end{equation}
the so-called Ladyzhenskaya-Prodi-Serrin condition,
see, for instance, \cite{BFGM_19},
\cite{RZ_20}, \cite{RZ_21}, and the references therein. 
  It is worth noting that condition \eqref{9.25.1} appears 
in \cite{P_1959}, \cite{S_1962} and \cite{L_1967} as a critical
 case of a condition 
for the uniqueness of the Leray-Hopf generalised solutions 
to the incompressible Navier-Stokes equation and on their regularity.   
For recent developments in this direction we refer to \cite{DD_09} 
and the references therein.

Observe that, if $p >d$, we can take an
arbitrary constant $\hat N$
and introduce
$$
\lambda(t)=\hat N\Big(\int_{\bR^{d}}
|b (t,x)|^{p }\,dx\Big)^{1/(p -d )},
$$
then for  
$$
b^{M} (t, x)=b  (t, x)I_{|b  (t,x)|\geq \lambda(t)}
$$
and $B\in\bB_{\rho}$  we have
$$
\dashint_{B }|b^{M} (t, x)|^{d}\,dx
\leq \lambda^{d-p }(t)
\dashint_{B }|b  (t, x)|^{p }\,dx
\leq N(d)\hat N^{d-p }\rho^{-d}.
$$
 
Here $N(d)\hat N^{d-p }$ can be made
arbitrarily small if we choose $\hat N$
large enough. In addition, for 
$b^{B} =b -b^{M} $ we have
 $|b^{B} |\leq \lambda $ and
$$
\int_{0}^{T}\lambda^{2}(t)\,dt
=\hat N^{2}\int_{0}^{T}\Big(\int_{\bR^{d}}|b (t,x)|^{p }
\,dx\Big)^{q /p }\,dt<\infty.
$$
This shows that the assumption 
that $b$ is admissible is weaker
than  \eqref{1.5.01}, which is supposed to hold as one of alternative
assumptions
in \cite{RZ_20} and \cite{BFGM_19} if $p>d$.

In case $p=\infty$ and $q=2$, our
assumption on $b$
  is the same as in   \cite{BFGM_19},
just take $b^{ M} =0$  (this case is not 
considered in   \cite{RZ_20}, \cite{RZ_21}), but, if $p =d$ 
(and $q =\infty$) our condition is,  basically, weaker than 
in \cite{RZ_20} (this case is excluded in \cite{RZ_21})
and \cite{BFGM_19}
since we can take $b^{M}=bI_{|b|\geq\lambda}$,
where $\lambda$ is a large constant
and
$$
\int_{\bR^{d} }|b^{M} (t, x)|^{d}\,dx
$$
will be uniformly small if $b(t,\cdot)$
is a continuous $L_{d} $-valued function (one of alternative conditions
in \cite{RZ_20} and \cite{BFGM_19})
or the $L_{d} $-norm of
$b^{M}(t,\cdot)$ is uniformly small  as
in  \cite{BFGM_19}. Our condition on $b$
is satisfied with $r=d$, for instance, if
$$
\lim_{\lambda\to\infty}
\sup_{[0,T]}\int_{\bR^{d}}|b(t,x)|^{d}I_{|b(t,x)|\geq
\lambda \xi(t) }\,dx <\hat b ^{d} 
$$
(with $\hat b$ appropriate for \eqref{4.25.3})
for a  function $\xi(t)$
of class $L_{2}([0,T])$. 

  If $p=d$ another alternative condition
on $b$  in \cite{RZ_20} is that
\begin{equation}
                                               \label{1.31.1}
\lambda^{d}|B\cap\{|b(t,\cdot)|>\lambda\}|
\end{equation}
should be sufficiently small uniformly 
for all $B\in\bB_{1}$, $t$, and $\lambda>0$.
It turns out that in this case 
 the assumption 
that $b$ is admissible is satisfied 
(with $b^{ B} =0$), 
any $r\in (1,d)$, and $\rho_{0}=1$. This is shown in the following way, where
$B\in\bB_{\rho},\rho\leq 1$, $\alpha^{d}=M$,
and $M$ is the supremum of expressions in \eqref{1.31.1}:
$$
\rho^{r}\dashint_{B}|b(t,x)|^{r}\,dx
= N\rho^{r-d}\Big(\int_{0}^{\alpha/\rho}
+\int_{\alpha/\rho}^{\infty}\Big)\lambda^{r-1}
|B\cap\{|b(t,\cdot)|>\lambda\}|\,d\lambda
$$
$$
\leq N\rho^{r-d}\int_{0}^{\alpha/\rho}\rho^{d}
\lambda^{r-1}\,d\lambda
+N\rho^{ r-d}M\int_{\alpha/\rho}^{\infty}\lambda^{r-d-1}
\,d\lambda
=NM^{r/d}.
$$
 
\end{remark}

The case when one has $<$ in place of
$=$
in \eqref{9.25.1} is usually called
subcritical, whereas \eqref{9.25.1}
is a critical case. It turns out 
that assumption 
that $b$ is admissible can be satisfied with
 $r<d$   and $b^{M}(t,\cdot)\not\in L_{r+\varepsilon,\loc}$ 
 no matter how small $\varepsilon>0$ is.
In this sense we are dealing with a ``supercritical'' case.  
Also note that $a$ is a unit matrix   in 
\cite{BFGM_19}, \cite{Ki_23},
\cite{RZ_20} and many other papers,
where it is more appropriate to look
for $W^{2}_{ p }$-solutions, rather that $W^{1}_{2}$-solutions.

\mysection{Second application. Rasing the regularity}

We consider again the Cauchy problem \eqref{eq1}-\eqref{eq2} 
and in addition to Assumption \ref{assumption 1} 
we make the following assumptions. 
\begin{assumption}                                        
                                             \label{assumption 4}
The functions $Da$, $D\sigma$, $b$, $c$, $\nu$ 
and $D\nu$ are 1-admissible. 
\end{assumption}
\begin{assumption}                                        
                                              \label{assumption 5}
We have $\beta=\frf=0$, 
the initial condition $u_{0}$ 
in $L_{2}(\Omega,\cF_0, W^1_{2})$, and
\begin{equation}
E\int_0^{T}
(|f_s|^{2}_{L_{2}}
+|h _s|^{2}_{ W^1_{2} })\,ds<\infty,
\quad
E\Big(\int_{0}^{T}|g_{s}|_{W^{1}_{2}}
\,ds\Big)^{2}<\infty.
\end{equation}
\end{assumption}

\begin{theorem}
                                   \label{theorem 2.5.24}
Let Assumptions \ref{assumption 1}, \ref{assumption 4} and 
\ref{assumption 5} hold. Then there exists 
$\theta_{1}=\theta_{1}(d,\delta,r)\in(0,1]$ such that, if
\begin{equation*}
                                                             \label{1.2.5.24}
\hat b+\hat c+\hat\nu+\widehat{Da}+\widehat{D\sigma}
+\widehat{D\nu}\leq \theta_{1}, 
\end{equation*}
then \eqref{eq1}--\eqref{eq2} has a unique
$L_{2}$-solution that is also a unique
$W^{1}_{2}$-solution  $u=(u_t)_{t\in[0,T]}$, and 
$$
E\sup_{t\leq T}  | u_{t}e^{-\phi_{t}}  | _{W^1_{2}}^{2}
+E\int_{0}^{T}
  | u_{t}e^{-\phi_{t}}  | ^{2}_{W^{2}_{2}}\,dt
\leq N  | u_{0}  | ^{2}_{W^1_{2}}
$$
 \begin{equation}
                                          \label{3.2.5.24}
+NE\int_0^{T}
(|h_t e^{-\phi_{t}}|^{2}_{W^1_{2}}
+|f_{t}e^{-\phi_{t}}|^{2}_{L_{2}})\,dt
+NE\Big(\int_{0}^{T}|
e^{-\phi_{t}}g_{t}|_{W^{1}_{2}}
\,dt\Big)^{2},
\end{equation}
where 
$$
\phi_{t}=N(d,\delta,r)\int_{0}^{t}
(\bar b^{ 2}_{s}+\bar c^{ 2}_{s}
+\bar \nu^{ 2}_{s}
+\overline{Da}^{ 2}_s+\overline{D\sigma}^{ 2}_s
+\overline{D\nu}^{ 2}_s
+\rho_{0}^{- 2 } +\delta ) \,ds 
$$
and the
constants $N$
depend only on $d,\delta $.
\end{theorem}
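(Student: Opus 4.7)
The plan is to apply Theorem \ref{theorem 4.20.1} to the vector process $U_{t}:=(u_{t}, D_{1}u_{t},\dots,D_{d}u_{t})$ regarded as taking values in the Hilbert spaces $H:=L_{2}^{d+1}\supset V:=(W_{2}^{1})^{d+1}$. Uniqueness is immediate, because any $W^{1}_{2}$-solution is in particular an $L_{2}$-solution and Theorem \ref{theorem 4.25.1} applies. Thus the task is to establish existence of $U\in\bV$ in the sense of Definition \ref{definition 4.20.1} for these $H,V$, and to derive the estimate \eqref{3.2.5.24}.

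The first step is to derive the equation for $u^{l}:=D_{l}u$ by formally differentiating \eqref{eq1} in $x_{l}$. Since $\cL$ and $\cM^{k}$ commute with $D_{l}$ up to the commutators
\begin{equation*}
[D_{l},\cL]u=D_{j}((D_{l}a^{ij})D_{i}u)+(D_{l}b^{i})D_{i}u+(D_{l}c)u,
\quad
[D_{l},\cM^{k}]u=(D_{l}\sigma^{ik})D_{i}u+(D_{l}\nu^{k})u,
\end{equation*}
one gets $du^{l}=[\cL u^{l}+[D_{l},\cL]u+D_{l}f+D_{l}g]\,dt+[\cM^{k}u^{l}+[D_{l},\cM^{k}]u+D_{l}h^{k}]\,dw^{k}$. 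Coupled with the original equation for $u^{0}:=u$, this produces a system for $U$ whose principal part is the block-diagonal operator given by $\cL$ and $\cM^{k}$ on each component, analysed exactly as in the proof of Theorem \ref{theorem 4.25.1}. The commutator pieces $(D_{l}a)Du$, $(D_{l}\sigma)Du$, $(D_{l}\nu)u$ involve only the admissible coefficients $Da,D\sigma,D\nu$ from Assumption \ref{assumption 4} and are distributed among the $A_{t},\sca_{t},\sca^{*}_{t},\scb_{t},\scc_{t}$ slots of Theorem \ref{theorem 4.20.1} by the estimates of Lemma \ref{lemma 12.17.1} and Corollary \ref{corollary bDu}.

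The delicate point is that $Db$ and $Dc$ are not admissible. These are avoided by integration by parts in the weak formulation: for a test function $\psi$,
\begin{equation*}
((D_{l}b^{i})D_{i}u,\psi)=-(b^{i}D_{i}u,D_{l}\psi)-(b^{i}D_{i}u^{l},\psi),
\qquad
((D_{l}c)u,\psi)=-(cu,D_{l}\psi)-(cu^{l},\psi),
\end{equation*}
so that only $b$ and $c$ themselves appear. Since $|b^{i}D_{i}u|_{L_{2}}^{2}$ and $|cu|_{L_{2}}^{2}$ are controlled by $\hat b^{2},\hat c^{2}$ times the $V$-norm of $(u^{1},\dots,u^{d})=Du$ plus lower-order terms (Lemma \ref{lemma 12.17.1}), these contributions define bilinear forms bounded by $N(\hat b+\hat c)|U|_{V}|\psi|_{V}$ and can be absorbed into the $A_{t}$-operator for the system, without demanding any regularity of $b$ or $c$ beyond Assumption \ref{assumption 4}.

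The main obstacle will be verifying the coercivity \eqref{4.18.2} for the system operators on $V$. After expanding $|U|_{H}^{2}=|u|_{W^{1}_{2}}^{2}$ by It\^o's formula, the diagonal blocks produce the good term $-\delta|U|_{V}^{2}$ exactly as in Theorem \ref{theorem 4.25.1}, while each commutator or cross-coupling contributes $\leq N|U|_{V}^{2}$ times one of $\hat b,\hat c,\hat\nu,\widehat{Da},\widehat{D\sigma},\widehat{D\nu}$. Choosing $\theta_{1}(d,\delta,r)$ small enough that their sum is absorbed into $(\delta/2)|U|_{V}^{2}$ preserves coercivity; the bounded parts $\bar b^{2},\bar c^{2},\bar\nu^{2},\overline{Da}^{2},\overline{D\sigma}^{2},\overline{D\nu}^{2}$ produce the predictable multiplier $\alpha_{t}$ of the stated form in $\phi_{t}$. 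Theorem \ref{theorem 4.20.1} then delivers $U\in\bV$ with the estimate \eqref{4.21.1}, which on reading off components becomes \eqref{3.2.5.24}. Since the $u^{0}$-component of $U$ is an $L_{2}$-solution of \eqref{eq1}--\eqref{eq2}, uniqueness in Theorem \ref{theorem 4.25.1} identifies it with the $L_{2}$-solution of that theorem.
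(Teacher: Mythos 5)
Your overall strategy --- pairing the equation for $u$ and the formally differentiated equations for $D_{l}u$ so that the relevant inner product becomes the $W^{1}_{2}$ one, and integrating by parts so that only $b$ and $c$ themselves (never $Db$, $Dc$) appear --- is the same computation the paper performs, just written componentwise. The paper implements it by taking $H=W^{1}_{2}$, $V=W^{2}_{2}$ and defining $A_{t},B_{t}$ directly through the pairings \eqref{6.16.6.24}, \eqref{1.4.5.24} with $(1-\Delta)u$ (note also that $f$, being only in $L_{2}$, must enter through the $f^{*}$-slot via $\langle u,F^{*}_{t}\rangle=((1-\Delta)u,f_{t})$; your differentiated system likewise needs $D_{l}f$ moved onto the test function, which you do for $Db,Dc$ but do not mention for $f$).

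The genuine gap is the identification $U^{l}=D_{l}U^{0}$. Theorem \ref{theorem 4.20.1} applied to your system on $H=L_{2}^{d+1}$, $V=(W^{1}_{2})^{d+1}$ produces an abstract solution $U=(U^{0},\dots,U^{d})\in\bV$ with no built-in constraint between its components. Your closing argument identifies only $U^{0}$ with the $L_{2}$-solution $u$ of Theorem \ref{theorem 4.25.1}; it does not show $U^{l}=D_{l}u$, and without that the estimate \eqref{4.21.1} for $|U|_{\bV}$ says nothing about $|Du|_{W^{1}_{2}}$ or $|u|_{W^{2}_{2}}$, so neither the regularity claim nor \eqref{3.2.5.24} follows. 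The identification is not automatic: testing the $U^{0}$-equation against $-D_{l}\varphi$ and comparing with the $U^{l}$-equation requires moving a derivative onto $a^{ij}D_{i}U^{0}$, which presupposes $U^{0}\in W^{2}_{2}$ --- exactly what is to be proved. To close this you would need either an approximation/stability argument (smooth the data, where the constraint holds classically, and pass to the limit via Theorem \ref{theorem 4.22.1}, as the paper does for its $L_{p}$ results), or the paper's device of choosing $V=W^{2}_{2}$, $H=W^{1}_{2}$ from the outset, which encodes the constraint in the function spaces and makes the $H$-solution of \eqref{5.4.5.24} automatically an $L_{2}$-solution of the original problem lying in $W^{2}_{2}$ for a.e.\ $(\omega,t)$.
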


\begin{proof}
We will again use Theorem
\ref{theorem 4.20.1} but in contrast to
Section \ref{section 4.24.1} we define $H,V$
differently. Here  
 we set $H=W^1_{2}$, $V=W^{2}_{2}$ and 
define the linear operators 
$$
\text{$A_{t}:\,\,V\to V^{*}$
\quad and \quad 
$B_{t}:\,\,V\to \ell_2(H)$}
$$
by requiring
$$
\langle u,A_{t}v\rangle=
\big((1-\Delta)u,b^{Mi}_tD_iv 
+c_{t}^{M}v-N_{0}\rho_{0}^{-2}v -\delta v\big)
-(D_iu, a^{ij}_tD_jv )
$$
\begin{equation}
                                              \label{6.16.6.24}
-(D_kD_iu,(D_ka)^{Mij}_tD_jv)-(D_kD_iu,a^{ij}_tD_kD_jv)
\end{equation}
and for integers $k\geq1$ 
$$
(u,B^k_tv)_H=(u,\sigma^{ik}_tD_iv +\nu^{Mk}_tv)
+(D_lu, (D_l\sigma) ^{Mik}_tD_iv + (D_l\nu) ^{Mk}_tv)
$$
\begin{equation}  
                                                           \label{1.4.5.24}                                                                   
+(D_lu,\sigma^{ik}_tD_lD_iv +\nu^{Mk}_t D_lv)
\end{equation}
to hold for any $u,v\in V$,
where $(\cdot,\cdot)$ and $(\cdot,\cdot)_{H}$ 
are the scalar products in $L_{2}$ 
and in $H=W^1_2$, respectively, and $N_{0}=N_0(d,r)$ 
is a constant, specified later. 

Note that by Lemma \ref{lemma 12.17.1}
$$
((1-\Delta)u,b^{Mi}_tD_iv)\leq |u|_{V}|b^{Mi}_tD_iv|_{L_2}
\leq N\hat b|u|_{V}(|D^2v|_{L_2}+\rho_0^{-1}|Dv|_{L_2}), 
$$
\begin{equation*}                                                                      \label{2.3.5.24}
((1-\Delta)u,c_{t}^{M}v )
\leq N\hat c|u|_{V}(|Dv|_{L_2}+\rho_0^{-1}|v|_{L_2}),  
\end{equation*}
$$
(D_kD_iu,(D_ka)^{Mij}_tD_jv)\leq N\widehat{Da}|u|_V
\big(|D^2v|_{L_2}+\rho_0^{-1}|Dv|_{L_2}\big) 
$$
with a constant $N=N(d,r)$. Hence it is easy to see that 
$A_t$ is a bounded linear operator from 
$V$ into $V^{\ast}$ with operator norm bounded by a constant $N$ 
depending only on $d$, $r$, $\delta$, $\hat b$, $\hat c$, $\widehat{Da}$ 
and $\rho_0$. 
Moreover, for $v\in V$ (we accept apriori
that $\hat b+\hat c+\hat\nu+\widehat{Da}+\widehat{D\sigma}
+\widehat{D\nu}\leq1$)
$$
\langle v,A_{t}v\rangle
\leq -(D_iv, a^{ij}_tD_jv )
-(D_kD_iv,a^{ij}_tD_kD_jv)
-(N_0\rho_{0}^{-2}+\delta)|v|_H^2
$$
\begin{equation}
                                                                 \label{2.4.5.24}
+(\delta/8)(|Dv|^2_{L_2}+|D^2v|^2_{L_2})
+N_1(\hat b+\hat c+
\widehat{Da})|v|_V
+N_2\rho_{0}^{-2}|v|^{ 2}_{H}
\end{equation}
with constant $N_1$, $N_2$ depending only of $d,r,\delta$. 
From \eqref{1.4.5.24} we get 
$$
|B_tv|^2_{ H }\leq |\sigma^{i\cdot}D_iv|^2_{ L_2 }
+|\sigma^{i\cdot}DD_iv|^2_{ L_2 }+
\delta|v|^2_H
$$
$$
+N|\nu^{M }_tv|^2_{ L_2 }
+N|(D\sigma)^{Mi\cdot}_tD_iv|^2_{ L_2 }
$$
$$
+N|\nu^{M\cdot}_tDv|^2_{ L_2 }
+N|(D\nu)^{M\cdot}_tv|^2_{ L_2 }
$$
$$
\leq |\sigma^{i\cdot}D_iv|^2_{ L_2 }
+|\sigma^{i\cdot}DD_iv|^2_{ L_2 }+
\delta|v|^2_H
$$
\begin{equation}                                                   
                                                                    \label{3.4.5.24}
+N (\hat\nu+\widehat{D\sigma})|D^2v|_{L_2}
+N (\hat\nu+\widehat{D\nu})|Dv|^2_{L_2}
+N \rho_0^{-2}|v|^2_H, 
\end{equation}
with the  constants $N $  depending only on $d$, $\delta$ and $r$. 
This shows, in particular, that $B_t$ is a bounded linear operator 
mapping $V$ into $\ell_2(H)$ with operator norm bounded 
by a constant depending only on 
$d$, $\delta$, $r$, $\widehat{D\sigma}$, $\hat\nu$, $\widehat{D\nu}$ 
and $\rho_0$. 

Finally, for $v\in V$, due to \eqref{4.30.1}
\eqref{2.4.5.24}, and \eqref{3.4.5.24} (with $N_{i}=N_{i}(d,\delta,r)$)
$$
 2\langle v, A_{t}v\rangle+|B^{\cdot}_{t}v|^{2}_{\ell_{2}(H)}
 \leq -\delta(|Dv|^2_{L_2}+|D^2v|^{2}_{L_2})
 -2N_{0}\rho_{0}^{-2}|v|_{H}^{2}-\delta |v|^2_{H}
$$ 
$$
+N_1(\hat b+\hat c
+\widehat{Da}+\hat\nu
+\widehat{D\nu}
+\widehat{D\sigma})|v|^2_V
+N_2\rho_0^{-2}|v|^2_H
$$
 and
$$
|B_tv|^2_{\ell_2(H)}\leq N(\delta)|v|^2_{W^2_2} 
+N_1 (\hat\nu+\widehat{D\sigma}+\widehat{D\nu})|D^2v|_{L_2}
+N \rho_0^{-2}|v|^2_H, 
$$
 
Hence for $N_{2}\leq 2N_{0}$ and
$$
N_1(\hat b+\hat c+\widehat{Da}+\hat\nu
+\widehat{D\nu}+\widehat{D\sigma})
\leq \delta/2
$$
we get
$$
 2\langle v, A_{t}v\rangle+|B^{\cdot}_{t}v|^{2}_{\ell_{2}(
H)}\leq -(\delta/2)|v|^{2}_{V} 
$$
 and
$$
|B_tv|^2_{\ell_2(H)}\leq N(d,\delta)|v|^2_{W^2_2} 
+N(d,\delta,r) \rho_0^{-2}|v|^2_H.  
$$

Next, we define the linear operators $\sca^{*}_{t}:H\to V^{*}$,
$\scb_{t}: H\to \ell_2(H)$ 
 and $\scc_t: H\to H$  such that 
for all $u \in V$  and $v\in H$ 
$$
\langle u,\sca^{*}_{t}v\rangle
=\big((1-\Delta)u,b^{Bi}_tD_iv +c_{t}^{B}v
 \big)_{L_{2}}
-(D_kD_iu,(D_ka)^{Bij}_tD_jv)_{L_{2}}, 
$$     
$$
(u,\scb^k_tv)_H=(u,\nu^{Bk}_tv)_{L_{2}}
+(D_lu,(D_l\sigma)^{Bik}_tD_iv +(D_l\nu)^{Bk}_tv)_{L_{2}}
$$
\begin{equation*}
                                                  \label{8.16.6.24}
+(D_lu,\nu^{Bk}_t D_lv)_{L_{2}} , 
\end{equation*}
and 
\begin{equation*}                                     \label{9.16.6.24}
(u,\scc_tv)_H=\big((1-\Delta)u,N_{0}\rho_{0}^{-2}v+\delta v\big)_{L_{2}}
\end{equation*} 
hold. Then it is easy to see that with $N=N(d)$ 
$$
 |\sca^{*} _{t}v|_{V^{*}}\leq N(\bar b_t+\bar c_t
+\overline{Da}_t)|v|_{H}, 
$$
$$
|\scb_t v|_{ H }\leq N(\bar\nu_t+\overline{D\sigma}_t+
\overline{D\nu}_t)|v|_H, 
\quad|\scc_tv|_{H}\leq (N_{0}\rho_{0}^{-2}+\delta) |v|_H .
$$

Let us now consider the evolution equation 
\begin{equation}
                                                                   \label{5.4.5.24}
dv_t=(A_tv_t+\sca^{\ast}_tv_t+F_t^{\ast}+G_{t})\,dt 
+(B^k_tv_t+\scb^{k}_t v_t+h^k_t)\,dw^k_t, 
\quad
v_0=u_0,
\end{equation}
where $F^{\ast}_t\in V^{\ast}$  and
$G_{t}\in H$
are defined by requiring that for all $u\in W^2_2$ 
\begin{equation} 
                                                                    \label{10.16.6.24}
\langle u,F^{\ast}_t\rangle=((1-\Delta)u,f_t),
\quad (u,G_{t})_{H}=((1-\Delta)u,g_{t})
\end{equation}
hold. Then clearly, 
$|F_t|_{V^{\ast}}=|f_t|_{L_2}$,
$|G_{t}|_{H}=|g_{t}|_{H}$, and it is not difficult to see 
that the $H$-solution of this initial value problem, in the sense of 
Definition \ref{definition 4.25.1}, is an $L_2$-solution to 
the Cauchy problem \eqref{eq1}-\eqref{eq2} (with 
$\frf=\beta=0$), 
which by Theorem \ref{theorem 4.25.1} is unique. 
Hence we can finish the proof of the theorem 
by applying Theorem \ref{theorem 4.20.1} 
 and Remark \ref{remark 1.10.5.2024} 
to the evolution equation 
\eqref{5.4.5.24}. 
\end{proof}
Instead of Assumptions \ref{assumption 4} 
and \ref{assumption 5} 
we may make the following assumptions. 
\begin{assumption}                                        
                                                     \label{assumption 6}
The functions $Da$, $D\sigma$, $b$, $\nu $ 
and $D\nu $ are 1-admissible. 
Moreover, $c$ and $Dc$ are 1/2-admissible. 
\end{assumption}
\begin{assumption}                                        
                                                      \label{assumption 7}
We have $\beta=\frf=0$, $f=0$,  
the initial condition $u_{0}$ is 
in $L_{2}(\Omega,\cF_0, W^1_{2})$, and

\begin{equation}
E\Big(\int_0^{T}
|g_s|_{W^1_{2}}\,ds\Big)^2
+E\int_0^T|h _s|^{2}_{ W^1_{2} }\,ds<\infty.
\end{equation}
Following the proof of Theorem \ref{theorem 2.5.24} with 
minor modifications we can obtain the following theorem 
from Theorem   \ref{theorem 4.20.1}.
\end{assumption}
\begin{theorem}
                                                        \label{theorem 3.5.24}
Let Assumptions \ref{assumption 1}, \ref{assumption 6}, and 
\ref{assumption 7} hold. Then there exists 
$\theta_{2}=\theta_{2}(d,\delta,r)\in(0,1]$ such that, if
\begin{equation*}
\hat b+\hat c+\hat\nu +\widehat{Dc}+\widehat{Da}+\widehat{D\sigma}
+\widehat{D\nu }\leq \theta_{1}, 
\end{equation*}
then \eqref{eq1}--\eqref{eq2} has a unique
$L_{2}$-solution $u=(u_t)_{t\in[0,T]}$ and 
$$
E\sup_{t\leq T}  | u_{t}e^{- \phi_{t}}  | _{W^1_{2}}^{2}
+E\int_{0}^{T}
  | u_{t}e^{- \phi_{t}}  | ^{2}_{W^{2}_{2}}\,dt
\leq N  | u_{0}  | ^{2}_{W^1_{2}}
$$
 \begin{equation}
                                                                       \label{1.10.9.24}
 +NE\int_0^{T}
(|h_t e^{- \phi_{t}}|^{2}_{W^1_{2}}\,dt
+NE\Big(\int_0^T|g_{t}e^{- \phi_{t}}|_{W^1_{2}})\,dt\Big)^2,
\end{equation}
where 
$$
\phi_{t}= \lambda \int_0^t
(\bar b  ^2 _{s}+\bar \nu_s^2 
+\overline{Da}^2 _s+\overline{D\sigma}^2 _s
+\overline{D\nu}^2 _s
+\bar c_{s}
 +\overline{Dc}_s
+\rho_{0}^{- 2}  +\delta ) \,ds,    
$$
and $\lambda =\lambda(d,\delta,r) $ and 
$N =N(d,\delta) $ are (finite)
constants.  
\end{theorem}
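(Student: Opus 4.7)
\bigskip

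\noindent\textbf{Proof proposal.} The plan is to reduce to Theorem \ref{theorem 4.20.1} by taking $H=W^1_2$, $V=W^2_2$, and defining operators $A_t,B_t,\sca^*_t,\scb_t,\scc_t$ in essentially the same way as in the proof of Theorem \ref{theorem 2.5.24}, but making two adjustments: (a) since now $f=0$, we need not introduce the corresponding $F^*_t$ via \eqref{10.16.6.24}, only $G_t$; (b) since $c$ is only $1/2$-admissible (not $1$-admissible), the term $\bigl((1-\Delta)u,c^M_t v\bigr)$ appearing in the definition \eqref{6.16.6.24} of $A_t$ can no longer be estimated through $|c^M_tv|_{L_2}$ as in Theorem \ref{theorem 2.5.24}. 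Instead, I would integrate by parts once and write
\[
\bigl((1-\Delta)u,c^M_t v\bigr)=(u,c^M_tv)+(D_lu,(D_lc^M_t)v)+(D_lu,c^M_t D_lv),
\]
so that each factor is at most first order in $u$ and $v$; this naturally brings $Dc$ into play and explains why $Dc$ must be $1/2$-admissible.

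To bound the three pieces I would use the bilinear form of the $1/2$-admissibility inequality \eqref{12.17.1b}, the same device that underlies Corollary \ref{corollary bDu}. Writing, e.g., $|(D_lu,(D_lc^M_t)v)|\leq\bigl(\int|Dc^M_t|\,|Du|^2\bigr)^{1/2}\bigl(\int|Dc^M_t|\,v^2\bigr)^{1/2}$ and applying \eqref{12.17.1b} both to $Du$ and to $v$ gives a bound by $N\widehat{Dc}\bigl(|D^2u|_{L_2}+\rho_0^{-1}|Du|_{L_2}\bigr)\bigl(|Dv|_{L_2}+\rho_0^{-1}|v|_{L_2}\bigr)$. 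The terms $(u,c^M_tv)$ and $(D_lu,c^M_tD_lv)$ are handled analogously using $\hat c$ instead. These bounds are exactly of the form that lets us absorb $c$ and $Dc$ into the coercive $-\delta|v|^2_V$-term provided $\hat c+\widehat{Dc}$ is small. For the operators $B_t$, $\sca^*_t$, $\scb_t$ and $\scc_t$, I would follow \eqref{1.4.5.24}--\eqref{10.16.6.24} verbatim, with the $c^B_t$-contribution put into $\sca^*_t$ via an analogous integration-by-parts on $\bigl((1-\Delta)u,c^B_tv\bigr)$ so that the bounded part $\bar c_t+\overline{Dc}_t$ feeds into $\alpha_t$.

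The coercivity computation then proceeds as in \eqref{2.4.5.24}--\eqref{3.4.5.24}: one collects the small contributions $\hat b+\hat c+\hat\nu+\widehat{Dc}+\widehat{Da}+\widehat{D\sigma}+\widehat{D\nu}$ multiplying $|v|^2_V$, chooses $\theta_2$ so that $N_1\cdot\theta_2\leq\delta/2$, picks $N_0$ so that $N_2\leq 2N_0$, and obtains $2\langle v,A_tv\rangle+|B^\cdot_tv|^2_{\ell_2(H)}\leq-(\delta/2)|v|^2_V$ together with the growth bound on $|B_tv|^2_{\ell_2(H)}$ required by Remark \ref{remark 1.10.5.2024}. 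One then verifies that the norms of $\sca^*_t$, $\scb_t$, $\scc_t$ are controlled pointwise in $t$ by $\bar b_t+\bar c_t+\overline{Dc}_t+\overline{Da}_t$, $\bar\nu_t+\overline{D\sigma}_t+\overline{D\nu}_t$ and $N_0\rho_0^{-2}+\delta$ respectively, so that the choice of $\alpha_t$ appearing in $\phi_t$ meets the smallness condition required just before Theorem \ref{theorem 4.20.1}.

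With the setup complete, Theorem \ref{theorem 4.20.1} applied to the evolution equation
\[
dv_t=(A_tv_t+\sca^{\ast}_tv_t+G_t)\,dt+(B^k_tv_t+\scb^k_tv_t+h^k_t)\,dw^k_t,\quad v_0=u_0,
\]
yields an $H$-solution $v\in\bV$ together with estimate \eqref{1.10.9.24}, while a direct check shows that any such $v$ is an $L_2$-solution of \eqref{eq1}--\eqref{eq2}; uniqueness then follows from the $L_2$-uniqueness part of Theorem \ref{theorem 4.25.1} (whose hypotheses are met because $\theta_2$ may be taken $\leq\theta_0$). The main obstacle is the coercivity verification after the integration-by-parts reshuffling of the $c^M_t$-term: one must track which of $\hat c$ and $\widehat{Dc}$ multiplies which of $|Dv|_{L_2}$, $|D^2v|_{L_2}$, $|v|_{L_2}$, and confirm that the problematic second-order factors $|D^2v|_{L_2}$ only appear against $\hat c+\widehat{Dc}$ (not $\rho_0^{-1}$), so that smallness genuinely absorbs them.
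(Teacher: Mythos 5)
Your overall route is the paper's: the authors prove this theorem by repeating the proof of Theorem \ref{theorem 2.5.24} with $H=W^1_2$, $V=W^2_2$, and the one substantive modification is exactly the one you identify, namely that $\bigl((1-\Delta)u,c_tv\bigr)$ must be integrated by parts into $(u,c_tv)+(D_lu,(D_lc_t)v)+(D_lu,c_tD_lv)$ so that the $1/2$-admissibility inequality \eqref{12.17.1b} applies to each piece and $Dc$ enters. Your coercivity bookkeeping for the $c^M$- and $(Dc)^M$-pieces (which factors carry $\hat c$, $\widehat{Dc}$, which carry $\rho_0^{-1}$, and the observation that $|D^2v|_{L_2}$ only appears against the small constants) is correct.

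There is, however, one genuine misstep: you route the $c^B$-contribution through $\sca^{\ast}_t:H\to V^{\ast}$. The hypotheses of Theorem \ref{theorem 4.20.1} require $\int_0^T|\sca^{\ast}_t|^2_{\sca^{\ast}}\,dt<\infty$ and force $\alpha_s\geq (32/\delta)|\sca^{\ast}_s|^2_{\sca^{\ast}}$, so with $|\sca^{\ast}_t|_{\sca^{\ast}}\gtrsim \bar c_t+\overline{Dc}_t$ you would need $\int_0^T(\bar c_t+\overline{Dc}_t)^2\,dt<\infty$ and would get $\bar c_s^2+\overline{Dc}_s^2$ in $\phi_t$. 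But $1/2$-admissibility of $c$ and $Dc$ only gives $\int \bar c_t\,dt<\infty$ and $\int\overline{Dc}_t\,dt<\infty$ (the exponent is $2\alpha=1$), and the theorem's $\phi_t$ correspondingly contains $\bar c_s+\overline{Dc}_s$ to the \emph{first} power. The fix is to put this contribution into $\scc_t:H\to H$: after the same integration by parts, the form $u,v\mapsto (u,c^B_tv)+(D_lu,(D_lc^B_t)v)+(D_lu,c^B_tD_lv)$ is bounded on $H\times H$ by $N(\bar c_t+\overline{Dc}_t)|u|_H|v|_H$, hence defines $\scc_tv\in H$ by Riesz representation with $|\scc_t|_{\scc}\leq N(\bar c_t+\overline{Dc}_t)$, and for $\scc$ Theorem \ref{theorem 4.20.1} only asks for $\int_0^T|\scc_t|_{\scc}\,dt<\infty$ and $4|\scc_s|_{\scc}\leq\alpha_s$ --- exactly what $1/2$-admissibility and the stated $\phi_t$ provide. (This is the same reason $\bar c^2$ appears in $\phi_t$ of Theorem \ref{theorem 2.5.24}, where $c$ is $1$-admissible and $c^B$ may go into $\sca^{\ast}_t$, but only $\bar c$ appears here.) With that relocation the rest of your argument goes through.
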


 \mysection{Estimates of solutions in $L_p$-spaces}

Fix a  $p>2$. In this section we suppose that together with Assumptions 
\ref{assumption 1}, \ref{assumption 2} and \ref{assumption 3} 
the following assumption holds. 

\begin{assumption}     
                                                            \label{assumption p}
The initial condition $u_{0}$ 
is in $L_{p}(\Omega,\cF_0, L_{p})$ and

\begin{equation}
E\Big(\int_0^{T}
(|\frf_s|^{2}_{L_{p}}
+|f_s|^{2}_{L_{p}}+|h _s|^{2}_{L_{p}  })\,ds
\Big)^{p/2}+E\Big(\int_{0}^{T}  | g_{s}  | _{L_{p}}
\,ds\Big)^{p}<\infty.
\end{equation}
\end{assumption}

\begin{theorem}
                                                              \label{theorem p}
Let Assumptions 
\ref{assumption 1}, \ref{assumption 2}, \ref{assumption 3} 
and \ref{assumption p} hold. Then there is 
$\kappa=\kappa(d,p,r,\delta)\in(0,1]$ such that if 
$$
\hat b+\hat\beta+\hat c+\hat\nu\leq \kappa,
$$
then \eqref{eq1}--\eqref{eq2} has a unique
$L_{2}$-solution $u=(u_t)_{t\in[0,T]}$ which  is an $L_p$-valued 
weakly continuous process. Moreover, it is an $L_q$-valued 
continuous process for every $q\in[2,p)$, and there is a constant 
$N=N(d,p,\delta)$   such that 
$$
E\sup_{t\leq T}  | u_{t}e^{-\phi_{t}}  | _{L_{p}}^{p}
+E\int_{0}^{T}
 e^{-p\phi_{t}}  |\,|u_{t}|^{p/2-1} Du_{t} | ^{2}_{L_{2}}\,dt
 $$
 \begin{equation}
                                                      \label{4.24.5.24}
 +E\int_{0}^{T}
 \alpha_t e^{-p\phi_{t}} | u_{t}e^{-\phi_{t}}  | ^{p}_{L_{p}}\,dt
\leq 
N I,
\end{equation}
where
$$
I:=  E| u_{0}  | ^{p}_{L_{p}}
+ E\Big(\int_0^{T}
|h _t e^{-\phi_{s}}|^{2}_{L_{p} }\,dt\Big)^{p/2}+
E\Big(\int_{0}^{T}  
|g_{t} e^{-\phi_{t}}| _{L_{p}}
\,dt\Big)^{p}
$$
\begin{equation}
                                                \label{6,24.3}
+ E\Big(\int_0^{T}
(|\frf_t e^{-\phi_{t}}|^{2}_{L_{p}}
+| f_{t}e^{-\phi_{t} }|^{2}_{L_{p}})\,dt\Big)^{p/2},
\end{equation}
\begin{equation} 
                                                              \label{1.22.9.24}                                                           
\phi_{t}=\int_{0}^{t}
\alpha_s\,ds,
\quad
\alpha_t=\lambda\gamma_t+\mu_t,  
\quad
\gamma_t= \bar b^2_{t}+\bar \beta^2_{t}+\bar \nu^2_{t}+\bar c_{t}
+\rho_{0}^{-2}+1 
\end{equation}
with a constant $\lambda=\lambda(d,p,r,\delta)>0$ 
and any nonnegative 
predictable process $\mu$ satisfying \eqref{6.24.5.24}.
\end{theorem}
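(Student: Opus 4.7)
The plan is to use Theorem \ref{theorem 4.25.1} to obtain existence and uniqueness of the $L_2$-solution $u$, and then derive \eqref{4.24.5.24} as an a priori bound via an It\^o formula applied to $|u_t|^p_{L_p}$. The singular admissibility hypotheses will be handled by invoking Lemma \ref{lemma 12.17.1} at the right power of $|u|$.

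First I would regularize: mollify $u_0, \frf, f, g, h$ and truncate/mollify the singular parts $b^M, \beta^M, c^M, \nu^M$ so that the resulting coefficients are bounded. For each such regularization Theorem \ref{theorem 4.25.1} produces an $L_2$-solution $u^\varepsilon$, while the stability Theorem \ref{theorem 4.22.1} ensures $u^\varepsilon \to u$ in $\bV$. The gain is that each $u^\varepsilon$ is regular enough (e.g.\ $W^2_p$-valued) for a genuine It\^o formula on $|u^\varepsilon_t|^p_{L_p}$ to apply.

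The heart of the argument is the a priori estimate for $u^\varepsilon$. It\^o's formula, combined with the SPDE, integration by parts, and the exponential weight $e^{-p\phi_t}$, yields a relation of the form
$$
|u_t|^p_{L_p}e^{-p\phi_t}+c_p\int_0^t e^{-p\phi_s}\!\!\int|u_s|^{p-2}|Du_s|^2\,dx\,ds+\int_0^t \alpha_s e^{-p\phi_s}|u_s|^p_{L_p}\,ds\leq(\text{RHS of }\eqref{4.24.5.24})+m_t,
$$
where the two dissipative terms on the left originate respectively from the strong parabolicity \eqref{4.30.1} and from the Gronwall factor, which is chosen to absorb the bounded parts $\bar b,\bar\beta,\bar c,\bar\nu$ and the free process $\mu_t$, and $m_t$ is a local martingale. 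The key technical step is to control the singular contributions: after Young's inequality, terms such as $\int(\beta^M)^i u|u|^{p-2}D_iu\,dx$ reduce to combinations of $\int|\beta^M|^2|u|^p\,dx$ and $\int|u|^{p-2}|Du|^2\,dx$. Setting $w=|u|^{p/2}$ and applying Lemma \ref{lemma 12.17.1}(i) to $w$ gives $\int|\beta^M|^2 w^2\,dx \prec N\hat\beta^2\int|Dw|^2\,dx+(\text{l.o.t.})$, and since $|Dw|^2=(p/2)^2|u|^{p-2}|Du|^2$ a.e., these terms can be absorbed into the leading dissipative integral provided $\kappa$ is taken small enough; an analogous argument handles the $b,\nu$ and ($1/2$-admissible) $c$ contributions. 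The martingale part $\sup_{t\leq T}|m_t|$ is then controlled by the Davis inequality exactly as after \eqref{6,21.4}.

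Once \eqref{4.24.5.24} is established (first for $u^\varepsilon$ and then for $u$ by weak lower semicontinuity and $u^\varepsilon\to u$ in $\bV$), weak $L_p$-continuity follows from $u\in L_\infty(0,T;L_p)$ together with the $L_2$-continuity already furnished by Theorem \ref{theorem 4.25.1}, by pairing against a dense subset of $L_{p'}$; strong continuity in $L_q$ for $q\in[2,p)$ is then obtained by interpolating the strong $L_2$-modulus against the $L_\infty(L_p)$-bound. The main obstacle I anticipate is the rigorous justification of the $L_p$-It\^o formula and of the identity $|Dw|^2\sim|u|^{p-2}|Du|^2$ at the approximating level, so that the estimate survives the passage to the limit; the advance smoothing of coefficients and data is precisely what makes this step tractable, and after the a priori bound is in hand the uniqueness assertion is inherited from Theorem \ref{theorem 4.25.1}.
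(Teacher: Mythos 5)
Your proposal follows essentially the same route as the paper: regularize the coefficients and data so that the solution is smooth enough for the $L_p$-It\^o formula of \cite{K_10}, establish the a priori bound by applying Lemma \ref{lemma 12.17.1} to $U=|u|^{p/2}$ so that the singular Morrey parts are absorbed into the dissipative term $\int|u|^{p-2}|Du|^2$ for small $\kappa$ while the bounded parts go into $\alpha_t$, control the martingale by the Davis inequality, pass to the limit via the stability Theorem \ref{theorem 4.22.1} and Fatou, and deduce the continuity assertions from $L_2$-continuity plus $E\sup_t|u_t|^p_{L_p}<\infty$. This matches the paper's argument in structure and in all the key technical points.
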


To raise the regularity of the solution we make the following 
assumptions.

\begin{assumption}                                                    \label{assumption Dp8}                
The functions $Da$, $D\sigma$, $b$, $c$, $\nu$ 
and $D\nu$ are admissible. 
\end{assumption}

\begin{assumption}                                                         \label{assumption Dp9}                
We have $\beta=\frf\equiv 0$ (a.s.) and 
for $\frp=2,p$ the initial condition $u_{0}$ 
is in $L_{\frp}(\Omega,\cF_{0},  W^1 _{\frp})$  
and 
\begin{equation}
E\Big(\int_0^T(|f_s|^{2}_{L_{\frp}}+
|h _s,Dh _{s}|^{2}_{L_{\frp}  }\,
ds\Big)^{\frp/2}
+E\Big(\int_0^T|g_s|_{W^1_{\frp}}\,ds\Big)^{\frp}<\infty. 
\end{equation}
\end{assumption}
We use a
weight function $\exp(-\Psi_t)$ with 
$$
\Psi_t=\int_0^t\Lambda_s\,ds, 
\quad
\Lambda_s
=C(1+\overline{Da}_s^2+\overline{D\sigma}_s^2+\overline b_s^2
+\overline{c}_s^2
+\overline{\nu}_s^2
+\overline{D\nu}_s^2)+\mu_s, 
$$
where $C=C(d, p, r, \delta, \rho_0)$ is a nonnegative constant, 
and $\mu=(\mu_t)_{t\in[0,T]}$ is a nonnegative 
predictable process satisfying  \eqref{6.24.5.24}.

\begin{theorem}                                                         \label{theorem Dp}
Let Assumptions \ref{assumption 1}, \ref{assumption Dp8}
and \ref{assumption Dp9} hold.  
Then there is a constant $\kappa=\kappa(p,\delta,d,r)>0$ 
such that, if 
\begin{equation*}
                                                                              \label{kappa1}
\widehat{Da}+\widehat{D\sigma}+\hat b+\hat{c}+\hat{\nu}
+\widehat{D\nu}\leq \kappa,  
\end{equation*}
then there is a unique 
$L_{2}$-solution that is also a unique
$W^{1}_{2}$-solution  $u$ to \eqref{eq1}--\eqref{eq2}. 
Moreover, $u=(u_t)_{t\in[0,T]}$ is a $W^1_p$-valued 
weakly continuous process, it is continuous 
as a $W^1_q$-valued process 
for every $q\in[2,p)$, and we have 
\begin{equation*}
E\sup_{t\leq T}|e^{-\Psi_t}u_t|^p_{W^1_p}
+E\int_0^{T}
e^{-p\Psi_t}(\big||Du_t|^{p/2-1} D^2u_t \big|^2_{L_2}
+\Lambda_t|u_t|^p_{W^1_p})\,dt
\end{equation*}
$$
\leq NE|u_0|^p_{W^1_p}
+ E\Big(\int_0^T|e^{-\Psi_t}g_t|_{ W^1_{p} }\,dt\Big)^{p}
$$
\begin{equation} 
                                                             \label{estimate Dp}
+NE\Big(\int_0^T(|e^{-\Psi_t}f_t|^{2}_{ L_{p} }
+|e^{-\Psi_t}h_t|_{W^1_p}^2)\,dt\Big)^{p/2} , 
\end{equation}
where $N$ is a constant depending only 
on $d,p,  \delta$. 
\end{theorem}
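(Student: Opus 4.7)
The plan is to combine Theorems \ref{theorem 2.5.24} and \ref{theorem p} for the $L_2/W^1_2$-solution and its $L_p$-properties, then upgrade to $W^1_p$ by writing a companion SPDE for each gradient component $v^l := D_lu$ and invoking Theorem \ref{theorem p} a second time. Since $1$-admissibility implies $1/2$-admissibility, Assumption \ref{assumption Dp8} yields Assumption \ref{assumption 2}, and together with the $\frp=2$ case of Assumption \ref{assumption Dp9} (and $\beta=\frf=0$) it supplies all hypotheses of Theorem \ref{theorem 2.5.24}; this produces a unique $W^1_2$-solution $u$, with uniqueness in the broader $L_2$-class coming from Theorem \ref{theorem 4.25.1}. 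The $\frp=p$ case of Assumption \ref{assumption Dp9} then activates Theorem \ref{theorem p}, delivering weak $L_p$-continuity, $L_q$-continuity for $q\in[2,p)$, and the $L_p$-bound \eqref{4.24.5.24} for $u$.

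Next, differentiating \eqref{eq1} formally in $x^l$ gives $dv^l = [\cL_t v^l + [D_l,\cL_t]u + D_lf + D_lg]\,dt + [\cM^k_t v^l + [D_l,\cM^k_t]u + D_lh^k]\,dw^k_t$, with
\[
[D_l,\cL_t]u = D_i((D_la^{ij})D_ju) + (D_lb^i)D_iu + (D_lc)u, \quad [D_l,\cM^k_t]u = (D_l\sigma^{ik})D_iu + (D_l\nu^k)u.
\]
To remove $Db,Dc$ (not assumed admissible) from the expressions, I would rewrite, using $D_l\phi = D_i(\delta_{il}\phi)$,
\[
(D_lb^i)D_iu + (D_lc)u + D_lf = D_i\bigl(\delta_{il}(b^jD_ju+cu+f)\bigr) - b^iD_iv^l - cv^l,
\]
and absorb the non-divergence remainder $-(b^iD_iv^l + cv^l)$ into $\cL_t v^l$, which zeroes out its $b,c$-drift. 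The upshot is that $v^l$ solves an SPDE of the form \eqref{eq1} with $a^{ij},\sigma^{ik},\nu^k$ unchanged, with $b=c=\beta=0$, and with effective free terms
\[
\tilde{\frf}^{l,i} = (D_la^{ij})D_ju + \delta_{il}(b^jD_ju+cu+f), \quad \tilde{f}^l\equiv 0, \quad \tilde{g}^l=D_lg,
\]
\[
\tilde{h}^{l,k}=(D_l\sigma^{ik})D_iu+(D_l\nu^k)u+D_lh^k,
\]
and initial datum $v^l_0 = D_lu_0\in L_p$ (Assumption \ref{assumption Dp9}). Only $Da,D\sigma,D\nu$ and $b,c,\nu$ appear, all covered by Assumption \ref{assumption Dp8}.

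Lemma \ref{lemma 12.17.1} and its $L_p$-Morrey-embedding analogues (already used in the proof of Theorem \ref{theorem p}) then yield pointwise-in-$t$ bounds of the shape
\[
|\tilde{\frf}^{l,\cdot}_t|_{L_p}^2 \leq N\kappa^2\,|u_t|_{W^1_p}^2 + N\Lambda_t |u_t|_{L_p}^2 + N|f_t|_{L_p}^2,
\]
and similarly for $|\tilde{h}^{l,\cdot}_t|_{L_p}^2$; the bounded-part contributions match the $\overline{Da},\overline{D\sigma},\overline b,\overline c,\overline\nu,\overline{D\nu}$ entries of $\Lambda_t$, so they are subsumed by the weight $e^{-\Psi_t}$. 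Applying Theorem \ref{theorem p} to each $v^l$, summing over $l=1,\ldots,d$, choosing $\kappa$ small enough to absorb the resulting $N\kappa^2 E\sup_{t}|Du_t|_{L_p}^p$ contribution into the left-hand side, and combining with the $L_p$-bound for $u$ from the first step, yields \eqref{estimate Dp}. Weak $W^1_p$- and strong $W^1_q$-continuity follow from the corresponding statements for $u$ and for each $v^l$.

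The main obstacle is the rigorous justification of the commutator manipulation when $u$ is a priori only in $W^1_2$, so that $D^2u$ is not classically defined. The standard remedy is a mollification-and-compactness argument: regularise the coefficients and the data, derive \eqref{estimate Dp} for the smooth problem where the commutator computation is literal and Theorem \ref{theorem p} applies verbatim, and pass to the limit using weak lower semicontinuity of the $L_p$-norms together with the stability assertion, Theorem \ref{theorem 4.22.1}.
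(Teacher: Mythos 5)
Your first paragraph (existence and uniqueness via Theorems \ref{theorem 2.5.24} and \ref{theorem p}, continuity properties from Theorem \ref{theorem p}) and your closing paragraph (regularise, prove the estimate for smooth data, pass to the limit via Theorem \ref{theorem 4.22.1} and Fatou) match the paper's strategy, and your divergence-form rewriting of the commutator terms involving $b,c,f$ is in the same spirit as the paper's decision to ``keep $(b^iD_iu+cu+f+g)_{(\eta)}$ in divergence form.'' The fatal problem is the middle step: you cannot apply Theorem \ref{theorem p} as a black box to $v^l=D_lu$, because the free terms of the derived equation, namely $\tilde{\frf}^{l,i}=(D_la^{ij})D_ju+\delta_{il}(b^jD_ju+cu+f)$ and $\tilde h^{l,k}=(D_l\sigma^{ik})D_iu+(D_l\nu^k)u+D_lh^k$, are not in $L_p$ under the standing assumptions. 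The Morrey parts $(Da)^M$, $b^M$, $(D\sigma)^M$, etc.\ satisfy only $\big(\dashint_{B_\rho}|\cdot|^{r}\big)^{1/r}\le\hat{(\cdot)}\rho^{-1}$ with a fixed $r\in(2,d]$ that is \emph{independent of} $p$; for large $p$ and small $r$ the product of such a function with $Du\in L_p$ need not lie in $L_p$ at all, so Assumption \ref{assumption p} fails for the companion equations and the claimed bound $|\tilde{\frf}^{l,\cdot}_t|^2_{L_p}\le N\kappa^2|u_t|^2_{W^1_p}+\dots$ has no basis (there is no $L_p$-analogue of Lemma \ref{lemma 12.17.1} available here, and even formally such a Morrey product estimate would have to cost a derivative and produce $|D^2u_t|_{L_p}$ on the right, not $|u_t|_{W^1_p}$).

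This is not merely a technical obstruction: even if the products were in $L_p$, the absorption step cannot close, because the left-hand side of \eqref{estimate Dp} controls only the mixed quantity $\big||Du_t|^{p/2-1}D^2u_t\big|^2_{L_2}$ and \emph{not} $|D^2u_t|^2_{L_p}$, so the term $N\kappa^2E\big(\int_0^T|D^2u_t|^2_{L_p}\,dt\big)^{p/2}$ that your scheme generates has nowhere to be absorbed. This is precisely why the paper does not iterate Theorem \ref{theorem p} on derivatives but instead reruns the It\^o-formula energy argument for the directional derivatives $v=u_{(\eta)}$ against the weight $\frh(\eta)=(1+|\eta|^{\kappa})^{-1}$ on $\bR^{2d}$: in that framework every Morrey product is estimated at the $L_1/L_2$ level through $U=|u|^{p/2}$, $V=|v|^{p/2}$ and $W=|\eta|^{p/2}|Du|^{p/2-1}|D^2u|$ using the $L_2$-version of Lemma \ref{lemma 12.17.1} and the comparison relations \eqref{1.14.6.24}, \eqref{7.27.3.24}, which is exactly what produces the term $\big||Du|^{p/2-1}D^2u\big|^2_{L_2}$ on the left of \eqref{estimate Dp}. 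To repair your argument you would have to replace the invocation of Theorem \ref{theorem p} for $v^l$ by this weighted two-variable energy estimate (or an equivalent device that never requires the $L_p$-norm of a Morrey coefficient times a gradient).
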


{\bf Proof of Theorem \ref{theorem p}}.
By Theorem \ref{theorem 2.5.24} there is a unique   
$L_{2}$-solution   $u$ to \eqref{eq1}--\eqref{eq2}. To prove 
\eqref{4.24.5.24} first we make the additional assumptions that 
the coefficients are smooth in $x\in\bR^d$, their derivatives in $x$ 
are bounded functions on $[0,T]\times\Omega\times\bR^d$, 
and the initial value $u_0$ and free terms $f$, $\frf$, $g$ and $h$ 
are also smooth in $x$ and such that 
$$
E|D^nu_0|^{p}_{L_p}+E\int_0^T|D^nf_t|^p_{L_p}\,dt
+E\int_0^T|D^n\frf_t|^p_{L_p}\,dt
$$
\begin{equation}
                             \label{1.9.6.24}
+E\int_0^T|D^ng_t|^p_{L_p}\,dt
+E\int_0^T|D^nh_t |^p_{L_p}\,dt<\infty
\end{equation}
for every integer $n\geq0$. 
Then by Theorem 1.2 in \cite{KR_77} $u$ is a $W^n_p$-valued continuous 
process. 
Thus we can apply Theorem 2.1  in \cite{K_10} on It\^o's formula to 
$|u_t|^p_{L_p}$, to get 
$$
(1/p)d|u|^p_{L_p}
=-(p-1)\big(u_t|^{p-2}(a^{ij}_tD_ju_t+\beta^i_t u_t
+\frf_t^i),D_iu_t\big)\,dt
$$
\begin{equation}                                                \label{1.1.7.24}
+(\theta_t|u_t|^{p-1},b^i_tD_iu_t+c_tu_t+f_t+g_t)\,dt
+\Big(\int_{\bR^d}J_tdx\Big)dt+dm_t, 
\end{equation}
where $\theta_t=\sign u_t$, 
$$
J_t:=\tfrac{p-1}{2}|u_t|^{p-2}\sum_{k}|M^ku_t+h^k_t|^2
\leq \tfrac{p-1}{2}|u_t|^{p-2}\sigma_t^{ik}D_iu_t\sigma_t^{jk}D_ju_t
$$
\begin{equation}                                                                            \label{7.24.5.24}                                                      
+(\delta/(2p))|DU_t|^2+N'|\nu_t|^2|U_t|^2+N|u_t|^{p-2}|h_t|^{2}, 
\end{equation}
$$
m_t:=\int_0^t\int_{\bR^d}\theta_t|u_t|^{p-1}(M^ku_t+h^k_t)\,dx\,dw^k_t, 
$$
where $U_t= |u_t|^{p/2}$
and $N$, $N'$ are constants 
depending only on $d$, $p$ and $\delta$.  
Using Assumption \ref{assumption 1} we have 
$$
I_1:=-(p-1)|u_t|^{p-2}a^{ij}_tD_ju_tD_iu_t
$$
$$
+\tfrac{p-1}{2}|u_t|^{p-2}\sigma^{ik}D_iu_t\sigma_t^{jk}D_ju_t
+(\delta/(2p))|DU_t|^2
$$
\begin{equation}
                                                                   \label{1.25.5.24}
\leq -\tfrac{2(p-1)}{p^2}(2a^{ij}-\sigma^{ik}\sigma^{jk})D_iU_tD_jU_t
+(\delta/2p)|DU_t|^2\leq -\hat\delta |DU_t|^2
\end{equation}
with $\hat\delta=\delta/(2p)$. By Lemma \ref{lemma 12.17.1} 
(using $\hat b,\hat\beta\leq 1$) we get 
$$
I_2:=-(p-1)|u_t|^{p-2}D_iu_t\beta^i_tu_t
+\theta_t|u_t|^{p-1}b^i_tD_iu_t\leq N|U_t|(|\beta_t|+|b_t|)|DU_t|
$$
\begin{equation*}
\prec (\hat\delta/4) |DU_t|^2+N_1(\hat\beta+\hat b)|DU_t|^2
+N(\bar\beta_t^2+\bar b_t^2+\rho^{-2}_0)U_t^2  
\end{equation*}
with $N_1=N_1(d,p,r,\delta)$ and $N=N(d,p,r,\delta)$. Similarly,  
$$
I_3:=\theta_t|u_t|^{p-1}c_tu_t
\leq |c_t| U_t^2
\prec N_2\hat c|DU|^2
+(\bar c_t+N\rho_0^{-2})U^2, 
$$
with $N_2$ and $N$, depending only on $d$, $p$ and $r$,
 and (using $\hat \nu\leq1$) 
\begin{equation}
                                                                               \label{1.7.6.24}
I_4:=N'|\nu_t|^2|U_t|^2\prec N_3\hat \nu|DU|^2
+(2N'\bar \nu_t^2+N\rho_0^{-2})U^2
\end{equation}
with constants 
$N_3=N_3(d,p,r,\delta)$,  
and $N=N(d,p,r, \delta)$, 
where $N'$ 
is the constant in \eqref{7.24.5.24}. We subject $\hat\beta$,  
$\hat b$, $\hat c$, $\hat\nu$ to
\begin{equation}                                                   \label{3.2.7.24}
N_1(\hat\beta+\hat b)\leq \hat\delta/4, 
\quad
N_2\hat c+N_3\hat \nu\leq   \hat\delta/4
\end{equation}
to get 
\begin{equation}                                                    \label{2.25.5.24}
I_1+I_2+I_3+I_4\prec -(\hat\delta/2) |DU_t|^2
+N(\bar\beta_t^2+\bar b_t^2
+\bar\nu_t^2+\bar c_t+\rho_0^{-2})|U_t|^2 
\end{equation}
with $N=N(d,p,r,\delta)$. Note that 
$$
|u_t|^{p-2}|\frf_t^iD_iu_t|
=|u_t|^{(p/2)-1}||u_t|^{(p/2)-1}\frf_t^iD_iu_t|
=(2/p)||u_t|^{(p/2)-1}\frf^iD_iU_t|.
$$
\begin{equation}                                                              \label{1.2.7.2024}
\leq (2/p)|u_t|^{(p/2)-1}|\frf||DU_t|
\end{equation}
Using the above estimates, \eqref{7.24.5.24}, \eqref{1.25.5.24},  
\eqref{2.25.5.24} and \eqref{1.2.7.2024}, 
from \eqref{1.1.7.24}  
we get 
$$
(1/p)d|u|^p_{L_p}
\leq\Big(\int_{\bR^d}\sum_{i=1}^4I_i(t,x)\,dx)\Big)\,dt
$$
$$
+\big((p-1)\big||u_t|^{p-2}\frf_t^iD_iu_t\big|_{L_1}
+\big||u_t|^{p-1}f_t\big|_{L_1}
+N\big||u_t|^{p-2}|h_t|^{2}\big|_{L_1}\big)\,dt
+dm_t 
$$
$$
\leq -(\hat\delta/2) |DU_t|_{ L_2} ^2\,dt+N(\bar\beta_t^2+\bar b_t^2
+\bar\nu_t^2+\bar c_t+\rho_0^{-2})|U_t|_{ L_2} ^2\,dt
$$
$$
+\big((p-1)\big||u_t|^{p-2}\big||\frf_t|Du_t\big|_{L_1}
+N\big||u_t|^{p-2}|h_t|^{2}\big|_{L_1}\big)\,dt
$$
$$
+\big(\big||u_t|^{p-1}f_t\big|_{L_1}+\big||u_t|^{p-1}g_t\big|_{L_1}\big)\,dt+dm_t. 
$$
Now we take $\lambda$ (in the definition of 
$\alpha$) 
greater than $p$ times  the constant $N$ in 
\eqref{2.25.5.24} ($\lambda$ will be further adjusted later).
 This specifies $\phi_{t}$ and
  using It\^o's product rule we have 
$$
|e^{-\phi_t}u_t|^p_{L_p}
+(\delta/2)\int_0^te^{-p\phi_s}|DU_s|^2_{L_2}\,ds
+\int_{0}^{t}\alpha_s|e^{-\phi_s}u_s|^p_{L_p}\,ds
$$
\begin{equation}
\leq |u_0|^p_{L_p}+\sum_{i=1}^5A_i(t)
\end{equation}
with
$$
A_1(t)=N\int_0^t(|e^{-\phi_s}u_s|^{p-2},
|e^{-\phi_s}h_s|^2 )\,ds, 
$$
$$
A_2(t)=2(p-1)\int_0^te^{-p\phi_s}(|u_s|^{(p/2)-1}|\frf_s|,|DU_s|)\,ds
$$
$$
A_3(t)=
p\int_0^t(|e^{-\phi_s}u_s|^{p-1},|e^{-\phi_s}f_s|)\,ds, 
$$
$$
A_4=p\int_0^t(|e^{-\phi_s}u_s|^{p-1},|e^{-\phi_s}g_s|)\,ds,
\quad
A_5=p\int_0^te^{-p\phi_s}dm_s, 
$$
where $N$ is a constant depending on $d$, $p$ and 
$\delta$.
Hence for any stopping time $\tau\leq T$ we obtain 
$$
E\int_0^{\tau}e^{-p\phi_s}|DU_{s}|^2_{L_2}\,ds
+E\int_{0}^{\tau}\alpha_s|e^{-\phi_s}u_s|^p_{L_p}\,ds
$$
\begin{equation}
                                                           \label{1.6.6.24}
\leq NE|u_0|^p_{L_p}
+N\sum_{i=1}^5EA_i(\tau), 
\end{equation}
\begin{equation}
                                                         \label{2.6.6.24}
E\sup_{t\leq\tau}|e^{-\phi_s}u_s|^p_{L_p}
\leq E|u_0|^p_{L_p}+N\sum_{i=1}^5EA_i(\tau)
+pE\sup_{t\leq\tau}\Big|\int_0^te^{-p\phi_s}\,dm_s\Big|   
\end{equation}
with constants $N=N(d,p,\delta)$.   
By Davis's inequality, 
then 
using $|u|^{p-1}=|u|^{p/2}|u|^{p/2-1}$ and 
Cauchy-Schwarz-Bunyakovsky inequality, for the last term 
we get
$$
pE\sup_{t\leq\tau}\Big|\int_0^{\tau}e^{- p\phi_s }dm_s\Big|
\leq 
3pE\Big(\int_0^{\tau}e^{-2p\phi_s}
\sum_k\big||u_s|^{p-1}(M_s^ku_s+g_s^k)\big|^2_{L_1}\,ds\Big)^{1/2}
$$
$$
\leq 6pE\Big(\int_0^{\tau}e^{-2p\phi_s}
|u_s|_{L_p}^{p}|J_s|_{L_1}\,ds\Big)^{1/2}
$$
$$
\leq 6p\Big(E\sup_{t\leq\tau}|e^{-\phi_s}u_s|_{L_p}^{p}\Big)^{1/2}
\Big(E\int_0^{\tau}e^{-p\phi_s}|J_s|_{L_1}\,ds\Big)^{1/2}
$$
\begin{equation}                                                 \label{1.4.7.24}
\leq \frac{1}{4}E\sup_{s\leq\tau}|e^{-\phi_s}u_s|^p_{L_p}
+{36}p^2 E\int_0^{\tau}e^{-p\phi_s}|J_s|_{L_1}\,ds.  
\end{equation}
From \eqref{7.24.5.24} (see \eqref{1.7.6.24} and \eqref{3.2.7.24}) 
we have   
$$
|J_s|_{L_1}\leq N|DU_s|^2_{L_2}+N_1(\bar\nu_s^2+\rho^{-2}_0)|u_s|^p_{L_p}
+N_2\big||u_s|^{p-2}|h_s|^2 \big|_{L_1}
$$
$$
\leq N|DU_s|^2_{L_2}+\alpha |u_s|^p_{L_p}
+N_2\big||u_s|^{p-2}|h_s|^2 \big|_{L_1}
$$
with constants $N=N(d,p,\delta)$, $N_1=N_1(d,p,r,\delta)$,  
and 
$N_2=N_2(d,p,\delta)$, by choosing $\lambda\geq N_1$.
Using this to estimate the last term in \eqref{1.4.7.24} 
and taking into account \eqref{1.6.6.24}, 
from \eqref{2.6.6.24} we obtain
$$
E\sup_{t\leq\tau}|e^{-\phi_{t}}u_t|^p_{L_p}
+
E\int_0^{\tau}e^{-p\phi_s}|DU_s|^2_{L_2}\,ds
+E\int_0^{\tau}\alpha_s|e^{-\phi_s}u_s|^p_{L_p}\,ds
$$
\begin{equation}                                   \label{2.7.6.24}
\leq  
\frac{1}{4}E\sup_{s\leq\tau}|e^{-\phi_s}u_s|^p_{L_p}
+NE|u_0|^p_{L_p}
+N\sum_{i=1}^4EA_i(\tau)
\end{equation}
%
with a constant $N=N(d,p,\delta)$. 
Clearly, 
$$
NEA_1(\tau)\leq NE\sup_{s\leq\tau}|e^{-\phi_s}u_s|_{L_p}^{p-2}
\int_0^{\tau}|e^{-\phi_s}h_s|^2_{L_p}\,ds
$$
\begin{equation} 
                                                                           \label{3.7.6.24}
\leq(1/8)E\sup_{s\leq \tau}|e^{-\phi_s}u_s|_{L_p}^{p}
+N_1\Big(\int_0^{\tau}|e^{-\phi_s}h_s|^2_{L_p}\,ds\Big)^{p/2},
\end{equation}
$$
NEA_2(\tau)\leq (1/4)E\int_0^{\tau}e^{-p\phi_s}|DU_s|^2_{L_2}\,ds
+N'E\int_0^{\tau}e^{-p\phi_s}\big||u_s|^{(p/2)-1}|\frf_s|\big|^2_{L_2}\,ds
$$
$$
\leq 
(1/4)E\int_0^{\tau}e^{-p\phi_s}|DU_s|^2_{L_2}\,ds
+(1/8)E\sup_{s\leq \tau}|e^{-\phi_s}u_s|_{L_p}^{p}
$$
\begin{equation} 
                                                                              \label{4.7.6.24}
+N_2E\Big(\int_0^{\tau}|e^{-\phi_s}\frf|^2_{L_p}\,ds\Big)^{p/2},
\end{equation}
\begin{equation}
                                                                               \label{5.7.6.24}
NA_3(\tau)\leq E\int_0^{\tau}|e^{-\phi_s}u_s|_{L_p}^{p}\,ds
+N_3E\int_0^{\tau}|e^{-\phi_s}f_s|^p_{L_p}\,ds,
\end{equation}
\begin{equation}                                     
                                                                                  \label{6.7.6.24}
NA_4(\tau)
\leq (1/8)E\sup_{s\leq \tau}|e^{-\phi_s}u_s|_{L_p}^{p}
+N_4E\Big(\int_0^{\tau}|e^{-\phi_s}g_s|_{L_p}\,ds\Big)^{p},  
\end{equation}
where $N$ is the constant from \eqref{2.7.6.24}, 
$N'$, $N_1$, $N_2$, $N_3$ and $N_4$ 
are constants, depending only on 
$d$, $p$ and $\delta$. 
Using now \eqref{2.7.6.24}, and 
\eqref{3.7.6.24} through \eqref{6.7.6.24} with 
$$
\tau_n=\inf\{t\in[0,T]: |u_t|_{W^1_p}\geq n\}\wedge T,
$$
in place of $\tau$, we get 
\eqref{4.24.5.24} with $\tau_n$ instead of 
$T$, which implies \eqref{4.24.5.24} by Fatou's 
lemma, since $\tau_n\to\infty$ as $n\to\infty$. 

To dispense with the additional assumptions on 
the coefficients, initial and free data of the problem 
\eqref{eq1}--\eqref{eq2}, we are going to approximate 
this problem. To this end let 
$\kappa$ be a nonnegative compactly supported 
smooth function with unit integral over $\bR^d$. 
For locally integrable functions $v$ 
on $\bR^d$  we use the notation 
$v^{(\varepsilon)}$ for the mollification $v\ast \kappa_{\varepsilon}$,  
where 
$\kappa_{\varepsilon}(\cdot)
=\varepsilon^{-d}\kappa(\cdot/\varepsilon)$ 
for $\varepsilon\in(0,1)$.  If $h$ is an $\alpha$-admissible function 
with a given decomposition 
$h=h^{B}+h^{M}$ and associated $\hat h,\bar h$,
 then for $\varepsilon>0$  
the notation $h^{\varepsilon}$ means 
\begin{equation}                                                                   \label{eps1}
h^{\varepsilon}:=h^{B\varepsilon}+h^{M(\varepsilon)},  
\quad 
\text{
where
\quad 
$h^{B\varepsilon}:={\bf1}_{\bar h\leq1/\varepsilon}h^{B(\varepsilon)}$}, 
\end{equation}
and the mollification is understood only in the variable $x\in\bR^d$. 
Note that 
\begin{equation}                                                              
                                                                  \label{1.1.4.24}
 \hat{h^{\varepsilon}}
\leq \hat h , 
\quad
|h^{B\varepsilon}|\leq \bar h 
\quad
\text{for all $(\omega,t,x)\in\Omega\times\bR_+\times\bR^d$}. 
\end{equation}
We approximate equation \eqref{eq1} with 
\begin{equation}           
                            \label{eqe}                                                
dv_t=(L^{\varepsilon}_tv_t
+D_i\frf^{\varepsilon i}_t+f^{\varepsilon}_t
+g^{\varepsilon}_t)\,dt
+(M^{\varepsilon k}_tv_t+h^{\varepsilon k}_t)\,dw^k_t,
\quad v_0=u_0^{(\varepsilon)},
\end{equation}
where 
$$
h^{\varepsilon}_t:={\bf1}_{|h_t|_{L_p}\leq1/\varepsilon} 
h^{(\varepsilon)}_t, 
$$
\begin{equation}                                                                   \label{eps2}
\frf^{\varepsilon}_t:={\bf1}_{|\frf_t|_{L_p}
\leq1/\varepsilon}\frf^{(\varepsilon)}_t, 
\quad
f^{\varepsilon}_t:={\bf1}_{|f_t|_{L_p}\leq1/\varepsilon}
f^{(\varepsilon)}_t,
\quad
g^{\varepsilon}_t:={\bf1}_{|g_t|_{L_p}\leq1/\varepsilon}g^{(\varepsilon)}_t, 
\end{equation}
$$
L^{\varepsilon}_tv= D_i(a^{(\varepsilon)ij}_{t}D_{j}v
+\beta^{\varepsilon i}_t v)+b^{\varepsilon i}D_{i} u_t 
+c^{\varepsilon}_tv, 
\quad
M^{\varepsilon k}_tv=\sigma^{(\varepsilon)ik}_{t}D_{i}v
+\nu^{\varepsilon   k}_{t}v.     
$$
It is easy to see that 
the coefficients of this equation together with their 
partial derivatives in $x\in\bR^d$ up to any 
order are bounded functions on $\Omega\times\bR_+\times\bR^d$. 
Moreover, $a^{(\varepsilon)}$ and 
$\sigma^{(\varepsilon)}=(\sigma^{1(\varepsilon)},...,\sigma^{d_1(\varepsilon)})$ 
satisfy Assumption \ref{assumption 1} with the constant $\delta$, and we have 
\eqref{1.9.6.24} with $u_0^{(\varepsilon)}$, $\frf^{\varepsilon}$, 
$f^{\varepsilon}$, $g^{\varepsilon}$ and $h^{\varepsilon}$, in place of  
$u_0$, $\frf$, $f$, $g$ and $h$, respectively. Consequently, by virtue of what we have 
proved above, \eqref{eqe} 
admits a unique $L_2$-solution $u^{\varepsilon}=(u^{\varepsilon}_t)_{t\in[0,T]}$ 
for each $\varepsilon>0$, it is a weakly continuous $L_p$-valued process. 
Moreover,  
$$
E\sup_{t\leq T}  | u^{\varepsilon}_{t}e^{-\phi_{t}}  | _{L_{p}}^{p}
+E\int_{0}^{T}
 e^{-p\phi_{t}}  |\,|u^{\varepsilon}_{t}|^{p/2-1} Du^{\varepsilon}_{t} | ^{2}_{L_{2}}\,dt
 $$
\begin{equation}
                                                  \label{2.9.6.24}
+E\int_{0}^{T}
 \alpha_t e^{-p\phi_{t}}| u^{\varepsilon}_{t}e^{-\phi_{t}}  
 | ^{p}_{L_{p}}\,dt\leq NI,
\end{equation}  
with a constant  $N=N(d,p,\delta)$ and $I$ from \eqref{6,24.3}.

Naturally, we are going to apply Theorem \ref{theorem 4.22.1}
to prove that, for any sequence $\varepsilon_{n}\downarrow 0$,
\begin{equation}
                                              \label{7.9.6.24}
\lim_{n\to\infty}E\sup_{t\in[0,T]}|u^{\varepsilon_n}_t-u_t|^2_{L_2}
+\lim_{n\to\infty}E \int_0^T|Du^{\varepsilon_n}_t-Du_t|^2_{L_2}\,dt=0.                                                                  
\end{equation}
To this end we  set $V=W^1_2$, $H=L_2$ 
and cast \eqref{eq1}--\eqref{eq2} into the evolution equation \eqref{4.28.1}
(see the proof of Theorem \ref{theorem 4.25.1}), and for each integer $n\geq1$ 
we cast \eqref{eqe}, with $\varepsilon_n$ in place of $\varepsilon$,   
into the evolution equation
$$
du^{\varepsilon_n}_{t}=\big[ A^{n}_{t}u^{\varepsilon_n}_{t} 
+\sca^{\ast n}_{t}u^{\varepsilon_n}_{t} 
+\sca^{n}_{t}u^{\varepsilon_n}_{t}
+\scc^{n}_{t}u^{\varepsilon_n}_{t}
+f^{*n}_{t} + f^{n}_{t}
+g^{n}_{t} \big]\,dt
$$
\begin{equation}
                                   \label{4.28.1e}
+\big( B^{nk}_{t}u^{\varepsilon_n}_{t}+\scb^{nk}_{t}u^{\varepsilon_n}_{t}
 +h^{nk}_{t}\big) \,dw^{k}_{t},
 \quad t\leq T,
 \quad 
 u^{\varepsilon_n}_{t}\big|_{t=0}=u^{(\varepsilon_n)}_{0}, 
\end{equation}
(see the proof of Theorem \ref{theorem 4.25.1}), 
where $A^n_t$, $B^n_t$, $\sca_t^{\ast n}$, $\sca_t^{  n}$, 
$\scb_t^{ n}$, 
$\scc_t^{ n}$ are defined as $A_t$, $B_t$, $\sca^{*}_t$, $\sca_t$, $\scb_t$, 
$\scc_t$, but 
with $a^{(\varepsilon_n)}$, $b^{\varepsilon_{n}}$....,$\nu^{\varepsilon_{n}}$
 in place of 
$a$, $b$,....,$\nu$ in their definition in 
\eqref{1.10.6.24}, \eqref{2.10.6.24}, \eqref{3.10.6.24}, and 
\eqref{6,26.1}.
The free terms $f^{\ast n}, f^{n},g^{n}$ 
are defined according to \eqref{eps2},  to ease the
 notation we replace $\varepsilon_{n}$ with $n$. 
To get \eqref{7.9.6.24} we are going to verify the conditions 
of Theorem \ref{theorem 4.22.1}.   

Using well-known properties of mollifications 
and taking into account \eqref{1.1.4.24} we can easily check that 
parts (i), (ii) and (iii) of Assumption \ref{assumption 4.22.1} hold. 
To verify part (iv) of this assumption,  
let $v\in C_0^{\infty}$ be vanishing outside of a ball $B_R$ 
of radius $R$. Then for the unit ball $B_V$ in $V=W^1_2$
$$
\sup_{\phi\in B_V}
\big|\big((a^{ij}_{t}-a^{(\varepsilon)ij}_{t})D_jv,D_i\varphi\big)_{H}\big| 
\leq  \sup_{\bR^d}|Dv||{\bf1}_{B_R}(a-a^{(\varepsilon)})|_{H}, 
$$
$$
\sup_{\phi\in B_V }
|\big((b^{Mi}_t-b_t^{M(\varepsilon)i})D_iv,\varphi\big)_{H}|
\leq \sup_{\bR^d}|Dv|
\big|{\bf1}_{B_R}(b^{M }_t-b_t^{M(\varepsilon) })\big|_{H}, 
$$
\begin{equation}                             \label{6.10.6.24}
\sup_{\phi\in B_V }
|\big((\beta^{Mi}_t-\beta_t^{M(\varepsilon)i})v,D_i\varphi\big)_{H}|
\leq \sup_{\bR^d}|v|
\big|{\bf1}_{B_R}(\beta^{M}_t-\beta_t^{M(\varepsilon)})\big|_{H}.  
\end{equation}
where, by well-known properties 
of mollification, the right-hand side of each inequality converges to zero 
as $\varepsilon\to0$. 
Moreover, using Lemma \ref{lemma 12.17.1} and taking into account 
\eqref{1.1.4.24} we have 
$$
\sup_{\varphi\in B_V }
|\big((c^{M}_t-c_t^{M(\varepsilon)})v,\varphi\big)_{H}|
\leq \sup_{\varphi\in B_V }
|\big(|c^{M}_t-c_t^{M(\varepsilon)}|^{1/2}v,
|c^{M}_t-c_t^{M(\varepsilon)}|^{1/2}\varphi\big)|
$$
$$
\leq \sup_{\bR^d}|v|
\big|{\bf1}_{B_R}|c^{M}_t-c_t^{M(\varepsilon)}|^{1/2}\big|_{L_2}
\sup_{\varphi\in B_V }
\big|{\bf1}_{B_R}|c^{M}_t-c_t^{M(\varepsilon)}|^{1/2}\varphi\big|_{L_2}
$$
\begin{equation}
                                   \label{6,25.1}
\leq N\hat c^{1/2}\sup_{\bR^d}|v|
\big|{\bf1}_{B_R}(c^{M}_t-c_t^{M(\varepsilon)})\big|^{1/2}_{L_1}
\end{equation}
with a constant $N=N(d,r,\rho_0)$. Note that by well-known 
properties of mollification the right-hand side of the 
last inequality converges to zero for $\varepsilon\to0$, 
which together with the convergence to zero
 of the left-hand side of each inequality in \eqref{6.10.6.24} implies 
 $$
 \lim_{n\to\infty}E| (A^{n}_{t}-A_{t})v| ^{2}_{V^{*}}=0. 
 $$
In the same way we have 
$$
\lim_{n\to\infty} | (B^{n}_{t}-B_{t})v  | ^{2}_{\ell_{2}(H)}=0, 
$$
i.e., \eqref{5.10.6.24} holds. 
 To verify \eqref{4.23.1} let $v\in\bV$ (see Definition \ref{definition 4.20.1}) 
  and note that 
\begin{equation} 
                                         \label{7.10.6.24}
E\int_{0}^{T}  | (\sca^{\ast n}_{t}-\sca^{\ast}_{t})v_{t}  |^2 _{V^{\ast}} 
\,dt
\leq 
 E\int_0^T|(\beta^{B}_t-\beta^{\varepsilon_n B}_t)v_t|^2_{H}\,dt, 
\end{equation} 
 where $\lim_{n\to\infty}|\beta^{B}_t-\beta^{\varepsilon_n B}_t|=0$ 
 for  almost every $(\omega,t,x)$, and 
$$
|(\beta^{B}_t-\beta^{\varepsilon_n B}_t)v_t|^2_{H}
\leq 4\sup_{t\in[0,T]}|v_t|^2_{H}\bar\beta^2_t, 
$$
$$
E\int_0^T\sup_{t\in[0,T]}|v_t|^2_{H}\bar\beta^2_t\,dt
\leq E\sup_{t\in[0,T}|v_t|^2_{H}\,
\sup_{\Omega}\int_0^T\bar\beta_t^2\,dt\,<\infty. 
$$
Thus letting $n\to\infty$ in \eqref{7.10.6.24}, 
by dominated convergence 
we get 
$$
\lim_{n\to\infty}
E\int_{0}^{T}  | (\sca^{\ast n}_{t}-\sca^{\ast}_{t})v_{t}  |^2 _{V^{\ast}}\,dt=0.  
$$
Similarly, 
 $$
\lim_{n\to\infty}E\Big(\int_{0}^{T}  | (\sca^{n}_{t}-\sca_{t})v_{t}  | _{H} 
\,dt\Big)^{2}
 \leq T\lim_{n\to\infty}
 E \int_0^T|(b^{B i}_t-b^{\varepsilon_n B i}_t)D_iv_t
 |^{2}_{H}\,dt=0,
 $$
\begin{equation*} 
                                                                                       \label{9.10.6.24}
\lim_{n\to\infty}E\int_{0}^{T}  | (\scb^{n}_{t}-\scb_{t})v_{t}  |^2 _{H}\,dt
=\lim_{n\to\infty}E\int_{0}^{T} 
 \big||\nu^{B}_{t}-\nu^{\varepsilon_n B}_t|_{\ell_2}v_{t} 
  \big|^2 _{H}\,dt=0,
\end{equation*} 
$$
\lim_{n\to\infty}
E\Big(\int_{0}^{T}  | (\scc^{n}_{t}-\scc_{t})v_{t}  | _{H}\,dt\Big)^2
=\lim_{n\to\infty}
E\Big(\int_{0}^{T}  | (c^{B}_{t}-c^{B\varepsilon_n}_{t})
v_{t}  | _{H}\,dt\Big)^2=0,
$$
which finishes the verification of condition \eqref{4.23.1}.
 This also completes 
the verification of the conditions of Theorem \ref{theorem 4.22.1}, and 
by that, the proof of \eqref{7.9.6.24}. 
In particular, for a subsequence of $\varepsilon_n$, for simplicity denoted also 
by $\varepsilon_n$, we have
$$ 
\lim_{n\to\infty}u^{\varepsilon_n}=u, 
\quad
\lim_{n\to\infty}Du^{\varepsilon_n}=Du 
\quad \text{$\Omega\times[0,T]\times\bR^{d}$-(a.e.)}.
$$
By lemma Fatou and \eqref{2.9.6.24} this allows us to conclude
that
$$
E\int_{0}^{T}
 e^{-p\phi_{t}}  |\,|u _{t}|^{p/2-1} Du _{t} | ^{2}_{L_{2}}\,dt
 +E\int_{0}^{T}
 \alpha_t e^{-p\phi_{t}}| u _{t}e^{-\phi_{t}}  | ^{p}_{L_{p}}\,dt
 \leq NI.
 $$
 To get the remaining part of estimate \eqref{4.24.5.24}
 observe that for any constant $M>0$
 $$
 (|u_{t}|\wedge M)^{p}\leq 2^{p}(|u_{t}-u^{\varepsilon_{n}}_{t}|
 \wedge M)^{p}+2^{p}|u^{\varepsilon_{n}}_{t}|^{p}
 $$
 $$
 \leq 2^{p}M^{p-2}|u_{t}-u^{\varepsilon_{n}}_{t}|^{2}
 +2^{p}|u^{\varepsilon_{n}}_{t}|^{p},
 $$
 implying that
 $$
 E\sup_{t\leq T} |u_{t}| ^{p}=\lim_{M\to\infty}
 E\sup_{t\leq T}(|u_{t}|\wedge M)^{p}\leq NI.
 $$
 This proves \eqref{4.24.5.24}.
 
 Finally, the fact that $u_{t}$ is  
weakly continuous as an  $L_p$-valued process and is
continuous as an  $L_q$-valued  process for every $q\in[2,p)$
immediately follows from its continuity as an $L_{2}$-valued
process and the finiteness of $E\sup_{t\leq T} |u_{t}| ^{p}$.
The theorem is proved.

\begin{remark}  
From its proof it can be seen that  
Theorem \ref{theorem p} holds also with the estimate obtained by 
replacing the right-hand side of \eqref{4.24.5.24} with 
$$
N|u_0|^p_{L_p}+NE\int_0^T|e^{- \phi_t}f_t|^{p}_{L_p}\,dt
+NE\Big(\int_0^T|e^{- \phi_t}g_t|^{p/(1+\kappa)}_{L_p}\,dt
\Big)^{1+\kappa}
$$
$$
+NE\Big(\int_0^T|e^{- \phi_t}\frf_t|^{2p/(2+\kappa')}_{L_p}\,dt\Big)^{(2+\kappa')/2}
+NE\Big(\int_0^T|e^{- \phi_t}h_t|^{2p/(2+\kappa'')}_{L_p}\,dt\Big)^{(2+\kappa'')/2}
$$
for any $\kappa\in[0,p-1]$ and $\kappa', \kappa''\in[0,p-2]$, 
where $N$ depends only on $d,p,\delta$.

To see this we need only replace the estimates 
\eqref{3.7.6.24}, \eqref{4.7.6.24} and 
\eqref{6.7.6.24} with the following estimates, 
 valid  for any $\kappa,\kappa'\in[0,p-2]$
 and $\kappa''\in[0,p-1]$,
\begin{align}
NEA_1(\tau)\leq &NE\sup_{s\leq \tau}|e^{-\phi_s}u_s|_{L_p}^{\kappa}
\int_0^{\tau}|e^{-\phi_s}u_s|_{L_p}^{p-2-\kappa}|e^{-\phi_s}h_s|^2_{L_p}\,ds \nonumber\\
\leq &(1/8)E\sup_{s\leq \tau}|e^{-\phi_s}u_s|_{L_p}^{p}
+NE\int_0^{\tau}|e^{-\phi_s}u_s|_{L_p}^{p}\,ds                                              \nonumber\\
&+NE\Big(\int_0^{\tau}
|e^{-\phi_s}h_s |^{2p/(\kappa+2)}_{L_p}\,ds\Big)^{(\kappa+2)/2},     \nonumber\\
NEA_2(\tau)\leq& NE\int_0^{\tau}e^{-p\phi_s}
\big|\frf_s |u_s|^{(p/2)-1}\big|_{L_2}|DU_s|_{L_2}\,ds                                      \nonumber\\                                  
\leq&(1/4)E\int_0^{\tau}e^{-p\phi_s}|DU_s|^2_{L_2}\,ds
+NE\int_0^{\tau}e^{-p\phi_s}|u_s|_{L_p}^{p-2}|\frf_s |_{L_p}^2\,ds            \nonumber\\
\leq &(1/4)E\int_0^{\tau}e^{-p\phi_s}|DU_s|^2_{L_2}\,ds
+(1/8)E\sup_{s\leq \tau}|e^{-\phi_s}u_s|_{L_p}^{p}                                          \nonumber\\
&+NE\Big(\int_0^{\tau}
|e^{-\phi_s}\frf_s|^{2p/(\kappa'+2)}_{L_p}
\,ds\Big)^{(\kappa'+2)/2},                    \nonumber
\end{align}
and 
\begin{align}
NA_4(\tau)\leq &E\sup_{s\leq\tau}|e^{-\phi_s}u_s|_{L_p}^{\kappa''}
\int_0^{\tau}|e^{-\phi_s}u_s|^{p-1-\kappa''}_{L_p}
| e^{-\phi_s} g_s|_{L_p}\,ds                      \nonumber\\
\leq&(1/8)E \sup_{s\leq\tau}|e^{-\phi_s}u_s|_{L_p}
+NE\int_0^{\tau}|e^{-\phi_s}u_s|_{L_p}^p\,ds                                                    \nonumber\\
&+NE\Big(\int_0^{\tau}| e^{-\phi_s} g_s|_{L_p}^{p/(1+\kappa'')}
\,ds\Big)^{1+\kappa''}, 
\end{align}
where $N$ denotes constants depending only on $d$, $p$ and $\delta$.

This has some advantages
in comparison with using H\"older's inequality, which yields, for instance,
$$
E\Big(\int_0^T|e^{- 2\phi_t}g_t| _{L_p}\,dt
\Big)^{p}\leq
E\Big(\int_0^T|e^{- \phi_t}g_t|^{p/(1+\kappa)}_{L_p}\,dt
\Big)^{1+\kappa}
$$
$$
\times \Big(\int_0^Te^{-\phi_{t}p/(p-\kappa-1)}
\,dt
\Big)^{p-\kappa-1}\leq N
E\Big(\int_0^T|e^{- \phi_t}g_t|^{p/(1+\kappa)}_{L_p}\,dt
\Big)^{1+\kappa}.
$$

\end{remark}

{\bf Proof of Theorem \ref{theorem Dp}}.
By Theorem \ref{theorem p} there is a unique $L_2$-solution $u$, and 
estimate \eqref{4.24.5.24} holds. 
To prove estimate \eqref{estimate Dp}, as in the proof of Theorem \ref{theorem p}, 
first we make the additional assumptions that 
the coefficients are smooth in $x\in\bR^d$, their derivatives in $x$ 
are bounded functions on $[0,T]\times\Omega\times\bR^d$, 
and the initial value $u_0$ and free terms $f$, $\frf$, $g$ and $h$ 
are also smooth in $x$ and satisfy \eqref{1.9.6.24}, which imply
(see Theorem 1.2 in \cite{KR_77}) that $u$ is a $W^n_p$-valued continuous 
process for every integer $n\geq1$. To get  \eqref{estimate Dp}                                                              
we follow the proof of a similar estimate in \cite{K2022b}, 
with appropriate changes. To this end first we recall some notations and statements 
from \cite{K2022b}. 

Let $\frh(\eta)=(1+|\eta|^{\kappa})^{-1}$ for $\eta\in\bR^d$ and with a fixed 
$\kappa\geq d+p+1$. For functions $u(x,\eta)$ and $v(x,\eta)$ on $\bR^{2d}$ 
we write $u\prec_{\kappa} v$ if 
$$
\int_{\bR^{2d}}\frh(\eta)u(x,\eta)\,dxd\eta
\leq \int_{\bR^{2d}}\frh(\eta)v(x,\eta)\,dxd\eta.  
$$
Similarly, for functions 
$u_t(x,\eta)$ and $v_t(x,\eta)$ on $[0,T]\times\bR^{2d}$ we write 
$du_t\prec_{\kappa} dv_t$ if $u_t-u_s\prec_{\kappa} v_t-v_s$ 
for $0\leq s\leq t\leq T$.
One can show, see Corollary 4.2 in \cite{K2022b}, that 
for any smooth functions $u$, $v$ on $\bR^d$, and for  
$p, q\geq0$ and $\kappa\geq d+1+(p+q)d$, 
$$
|Du|^p\leq N\int_{\bR^d}\frh(\eta)|u_{(\eta)}|^p\,d\eta
$$
\begin{equation}
                                                                                    \label{1.14.6.24}
|Du|^p|D^2v|^q
\leq N\int_{\bR^d}\frh(\eta)|u_{(\eta)}|^p|Dv_{(\eta)}|^q\,d\eta
\end{equation}
holds, where $u_{(\eta)}:=\eta^iD_iu$ for functions $u$ on $\bR^d$. 
As in \cite{K2022b}, 
we introduce 
$$
v_t=v_t(x,\eta)=u_{t(\eta) }(x), 
\quad\theta_t=\sign v_t,
$$
$$
U_t=|u_t|^{p/2},
\quad 
V_t=|v_t|^{p/2}, 
\quad 
W_t=|\eta|^{p/2}|Du_t|^{p/2-1}|D^2u_t|. 
$$
Observe that 
\begin{equation}                                                       
                                                                                   \label{6.27.3.24}
|DV_t|^2\leq NW_t^2
\end{equation}
with $N=N(d,p)$, 
and that Corollary 4.2 in \cite{K2022b} implies 
\begin{equation}                                                 
                                                                                   \label{7.27.3.24}                                
|\eta|^p|Du|^p\prec_{\kappa}N|v_t|^p, 
\quad W_t^2\prec_{\kappa} N|DV_t|^2 
\end{equation}
with $N=N(d,p,\kappa)$. Substituting $-\varphi_{(\eta)}$ in place of 
$\varphi$ in \eqref{solution} and taking into account 
$$
-(D_i(a^{ij}D_ju+\beta^iu,\varphi_{(\eta)})
=(a^{ij}D_ju,\partial_{\eta}D_i\varphi)
$$
$$
=-(a^{ij}_{(\eta)}D_ju+a^{ij}D_ju_{(\eta)},D_i\varphi)
$$
$$
=(D_i(a^{ij}_{(\eta)}D_ju+a^{ij}D_ju_{(\eta)}),\varphi), 
$$
for $v=u_{(\eta)}$ we get 
$$
dv_t=\big(D_i(a_t^{ij}D_jv_t+a^{ij}_{(\eta)}D_ju_{t})
+ (b^i_tD_iu_t+c_tu_t+f_t)_{(\eta)}+g_{t(\eta) }\big)\,dt
$$
$$
+\big(M^k_tv_t+\sigma^{ik}_{t(\eta)}D_iu_t
+\nu^k_{t(\eta)}u_t+h^k_{t(\eta)}\big)\,dw_t^k. 
$$
Note that differently from the corresponding expression in 
\cite{K2022b} we keep the term 
$ (b^i_tD_iu_t+c_tu_t+f_t+g_t)_{(\eta)}$ 
in divergence form, to avoid additional regularity conditions 
on $b$, $c$, $f$ for the estimate \eqref{estimate Dp} to hold.  
By It\^o's 
formula (see Theorem 2.1  in \cite{K_10}), 
$$
(1/p)|v_t|^p_{L_p}
=                                                                                          
-(p-1)\Big(
\int_{\bR^d}|v_t|^{p-2}D_iv_t(a^{ij}_tD_jv_t
+a^{ij}_{t(\eta)}D_ju_{t})\,dx
\Big)\,dt 
$$
\begin{equation*}                    
-(p-1)
\Big(
\int_{\bR^d}(\theta_{t}|v_t|^{p-2} u_{t(\eta)(\eta)} (b^i_tD_iu_t+c_tu_t+f_t+g_t)\,dx
\Big)\,dt
\end{equation*}
\begin{equation}
                                                                             \label{5.25.3.24}
+\Big(\int_{\bR^d}J_t\,dx\Big)\,dt+m_t, 
\end{equation} 
where 
$$
J_t=((p-1)/2)|v_t|^{p-2}\sum_{k}(M_t^kv_t+\sigma^{ik}_{t(\eta)}D_iu_t
+\nu_{t(\eta)}^ku_t+h_{t(\eta)}^k)^2
$$
$$
\leq ((p-1)/2)|v_t|^{p-2}\sum_{k}\big(\sigma_t^{ik}D_iv_t\big)^2+ 
\delta/(2p)|DV_t|^2
$$
\begin{equation}                                                        \label{5.23.3.24} 
+N|\nu _t|^2|v_t|^{p}+N|\eta|^2|v_t|^{p-2}
(|D\sigma_t|^2Du_t|^2+|D\nu _t|^2|u_t|^2+|Dh _t|^2) 
\end{equation}
with a constant $N=N(d,p,\delta)$, and 
$$
m_t=\int_0^t\int_{\bR^{d}}J_s^k\,dx\, dw^k_s
$$
with 
$$
J^k_s=\theta_s|v_s|^{p-1}(M_s^kv_s+\sigma^{ik}_{s(\eta)}D_iu_s
+\nu_{s(\eta)}^k+h_{s(\eta)}^k). 
$$ 
Below we collect the integrands in \eqref{5.25.3.24} 
and \eqref{5.23.3.24},  
arrange them in suitable expressions $I_1$,...,$I_8$ 
to estimate them and their integrals separately. 
Above we have dropped the space argument,
now we also drop for some time the time argument for simplicity.
Collecting the terms containing $a^{ij}$, $\sigma^{ik}$ and $|DV_t|^2$
we have
$$
I_1:=-(p-1)|v|^{p-2}a^{ij}D_ivD_jv+\tfrac{p-1}{2}
|v|^{p-2}\sum_k(\sigma^{ik}D_iv)^2+(\delta/2p)|DV|^2
$$
$$
=-\tfrac{p-1}{2}(2a^{ij}-\sigma^{ik}\sigma^{jk})
|v|^{p-2}D_ivD_jv+\delta/(2p)|DV|^2
$$
$$
=-\tfrac{2(p-1)}{p^2}
(2a^{ij}-\sigma^{ik}\sigma^{jk})D_iVD_jV+(\delta/2p)|DV|^2. 
$$
Thus by Assumption \ref{assumption 1} and by \eqref{7.27.3.24}
$$
I_1\leq -\tfrac{2(p-1)}{p^2}\delta |DV|^2+(\delta/2p)|DV|^2
$$
\begin{equation}  
                                                                         \label{7.23.3.24}
\leq -(\delta/2p)|DV|^2\prec_{\kappa}-\hat\delta(|DV|^2+W^2)
\end{equation}
with a constant $\hat\delta=\hat\delta(d,p,\delta)>0$. 

In what follows, without losing generality, we assume
$$ 
\widehat{Da}+\widehat{D\sigma}+\hat b+\hat{c}+\hat{\nu}
+\widehat{D\nu}\leq1. 
$$
By Young's inequality, by the 
first estimate in \eqref{7.27.3.24} 
and by Lemma \ref{lemma 12.17.1}
we get 
$$
I_2:=-(p-1)|v|^{p-2}a^{ij}_{(\eta)}D_ivD_ju
\leq (p-1)|v|^{p-2}|\eta||D^2u||Da||\eta||Du|
$$
$$
\leq |v|^{p-2}((\hat\delta/4)|\eta|^2|D^2u|^2+N|Da|^2|\eta|^2|Du|^2)
$$
$$
=(\hat\delta/4)W^2+N|Da|^2|v|^{p-2}|\eta|^2|Du|^2
\leq (\hat\delta/4)W^2+N|Da|^2|\eta|^p|Du|^p
$$
$$
\prec_{\kappa} (\hat\delta/4)W^2+N|Da|^2V^2 
$$
\begin{equation*}
                                                                             \label{4.25.3.24}
\prec_{\kappa}(\hat\delta/4)W^2+N_1
\widehat{Da} |DV|^2
+N(1+\overline{Da}^2)V^2
\end{equation*}
with constants $N_1=N_1(d,p,r,\delta)$ and 
$N=N(d,p,r,\delta,\rho_0)$. 
We get in the same way  
$$
I_3:=-(p-1)\theta |v|^{p-2}u_{(\eta)(\eta)}b^iD_iu
$$
\begin{equation*} 
                                                                    \label{3.24.3.24}
\prec_{\kappa} (\hat\delta/8)W^2
+N_2\hat b|DV|^{2}+N(1+{\bar b}^2)V^2
\end{equation*}
with constants $N_{2}
=N_{2}(d,p,r,\delta)$ and 
$N=N(d,p,r,\delta,\rho_0)$. 
Similarly,  
$$
I_4:=-(p-1)\theta|v|^{p-2}u_{(\eta)(\eta)}cu
\leq (p-1)|v|^{p-2}|\eta||D^2u||\eta||cu|
$$
$$
\leq |v|^{p-2}\big((\hat\delta/16) |\eta|^2|D^2u|^2
+Nc^2|\eta u|^2\big)
=(\hat\delta/16) W^2+Nc^2|v|^{p-2}|\eta|^2u^2
$$
$$
\leq(\hat\delta/16) W^2+Nc^2|v|^{p}+Nc^2|\eta|^p|u|^p
\leq(\hat\delta/16) W^2+Nc^2V^{2}+N{c}^2|\eta|^pU^2 
$$
with a constant $N=N(d,p,\delta)$.  
 Hence by Lemma \ref{lemma 12.17.1} and 
by the first estimate in \eqref{7.27.3.24} 
(remember also that $\hat c\leq 1$), we have 
\begin{equation*}
                                                                                \label{4.24.3.24}
I_4\prec_{\kappa} (\hat\delta/16) W^2
+N_3{\hat c}|DV|^{2}+
N{\hat c}|\eta|^p|DU|^2 
+N(1+{\bar c}^2)(V^2+|\eta|^pU^2) 
\end{equation*}
$$
\prec_{\kappa}(\hat\delta/16) W^2
+N_3{\hat c}|DV|^{2}
+N(1+{\bar c}^2)(V^2+|\eta|^pU^2) 
$$
with $N_3=N_3(d,p,\delta,r)$ 
and $N=N(d,p,\delta,r,\rho_0)$.
In the same pattern we  get 
$$
I_5:=N|\eta|^2|v|^{p-2}|D\sigma|^2|Du|^2
$$
$$
\leq N((p-2)/p)|D\sigma|^2V^2+N(2/p)|D\sigma|^2|\eta|^p(|Du|^{p/2})^2
$$
\begin{equation*}
                                                                                             \label{5.24.3.24}
\prec_{\kappa}N_4\widehat{D\sigma}
(W^2+|DV|^2)
+N'(1+\overline{D\sigma}^2)V^2,
\end{equation*}
\begin{equation}
                            \label{1.25.3.24}
I_6:=N|\nu|^2|v|^{p}
\prec_{\kappa} N_5\hat\nu|DV|^2
+N'(1 +\bar\nu^2)V^2 ,
\end{equation}
$$
I_7=N|\eta|^2|v|^{p-2}|D\nu|^2|u|^2
$$
$$
\leq (p-2)/p)N|D\nu|^2V^2+(2/p)N|D\nu|^2|\eta|^pU^2
$$
\begin{equation}
                                      \label{2.25.3.24}
\prec_{\kappa} N_6\widehat{D\nu}|DV|^2
+N'(1+\overline{D\nu}^2)(V^2+|\eta|^pU^2), 
 \end{equation} 
with the constant  $N$ in \eqref{5.23.3.24}, and constants 
$N'=N'(d,p,\delta,r,\rho_0)$ and $N_{i}=N_i(p,d,\delta,r)$ ($i=4,5,6$). 
For the terms containing $f$, $g$ and $Dh$ we have
$$
I_8:=-(p-1)\theta |v|^{p-2}u_{(\eta)(\eta)}f
+\theta |v|^{p-1}g_{(\eta)} 
+N|\eta|^2|v |^{p-2}|Dh  |^2  
$$
$$
\leq(p-1)W  |v|^{p/2-1}|\eta|\,|f|
+|\eta||v|^{p-1}|Dg|+N|\eta|^2|v |^{p-2}
|Dh  |^2  
$$
\begin{equation} 
                                                                            \label{3.25.3.24}
\leq(\hat\delta/16)W^{2}+N'I 
\end{equation} 
with the constant $N$ in \eqref{5.23.3.24}, 
a constant $N'=N'(d,p,\delta)$, and  
$$
I_{t}:=|v_{t}|^{p-2}|\eta|^2|f_{t}|^2+
|\eta||v_{t}|^{p-1}|Dg_{t}|+|v_{t}|^{p-2}
|\eta|^2|Dh_{t}|^2 . 
$$
Thus, subjecting   
$\widehat{Da}$, $\hat b$, $\hat c$, 
$\hat \nu$ and $\widehat{D\nu}$ to 
$$
N_1\widehat{Da} \leq\hat\delta/4, 
\quad
N_2{\hat b} \leq\hat\delta/8, 
\quad
N_3{\hat c} \leq\hat\delta/16, 
\quad
N_4\widehat{D\sigma} \leq\hat\delta/32
$$
\begin{equation}            
                                                                          \label{1.23.9.24}
N_5{\hat \nu} \leq\hat\delta/64, 
\quad                                     
N_6\widehat{D\nu} \leq\hat\delta/64,
\end{equation}
and taking into account \eqref{7.23.3.24} through 
\eqref{3.25.3.24}, 
from equation \eqref{5.25.3.24} we obtain 
$$
d|v_t|^p\prec_{\kappa} p \sum_{i=1}^{8}I_i(t)\,dt
+ p J^k_t\,dw^k_t
$$
$$
\prec_{\kappa}- 
 \hat\delta (|DV_t|^2+W^{2}_t )\,dt
+ N'\gamma'_t |v_t|^p\,dt+ N(d,p,\delta) I_t\,dt
$$
\begin{equation*} 
                                                                         \label{7.25.3.24}
+ N' (1+\overline{D\nu}^2 +\bar c_t^2 )|\eta|^pU_t^2\,dt
+ p J^k_t\,dw^k_t, 
\end{equation*}
 with $\gamma'=
1+\overline{Da}^2+\bar b^2+\bar c^2+
\overline{D\sigma}^2+\bar\nu^2$ 
and constants $N'=N'(d,p,\delta,r,\rho_0)$,
which  for $C\geq N'$  implies 
$$
d(e^{-p\Psi_t}|v_t|^p)\prec_{\kappa}
- \hat\delta e^{-p\Psi_t}(|DV_t|^2+W_t^2)\,dt
- \Lambda_te^{-p\Psi_t}|v_t|^p\,dt
$$
$$
+Ne^{-p\Psi_t}I_t\,dt+N'e^{-p\Psi_t}
(1+\overline{D\nu}^2 +\bar c^2 )|\eta|^pU_t^2\,dt
+ p e^{-p\Psi_t}J^k_t\,dw^k_t
$$
with $N=N(d,p,\delta)$ 
and $N'=N'(p,d,\delta,r,\rho_0)$. Converting this 
into integral form, with integrals with respect to $x$, $\eta$ and $t$ 
and using \eqref{1.14.6.24},  
we get that almost surely 
$$
e^{-p\Psi_t}|Du_t|^p_{L_{p}}
+\int_0^t\int_{\bR^d}e^{-p\Psi_s}(|Du_s|^{p-2}|D^2u_s|^2
+\Lambda_s|Du_s|^p)dx\,ds
$$
$$
\leq N|Du_0|_{L_p}^p
+ N' \int_0^te^{-p\Psi_s}
(1+\overline{D\nu_s}^2 +\bar c_s^2 )|u_s|^p_{L_p})\,ds
$$
 \begin{equation}                                                              
                                                                              \label{1.27.3.24}
+N\int_0^te^{-p\Psi_s}\int_{\bR^{2d}}\frh(\eta)I_s\,dxd\eta\,ds
+N\int_0^te^{-p\Psi_s}\int_{\bR^{2d}} \frh(\eta)J^k_s\,dx d\eta\,dw^k_s 
\end{equation}
for $t\in[0,T]$, with constants $N=N(d,p,\delta)$ 
and $N'=N'(d,p,\delta,r,\rho_0)$. Hence, 
 taking $\lambda\geq C\geq N'$ and 
$
\mu\geq N'\overline{D\nu}^2   
$
in \eqref{1.22.9.24}, by Theorem \ref{theorem p}   
for any stopping time $\tau\leq T$  we have  
$$
E\int_0^{\tau}e^{-p\Psi_t}\int_{\bR^d}
(|Du_t|^{p-2}|D^2u_t|^2+\Lambda_t|Du_t|^p)\,dx\,dt
$$ 
\begin{equation}
                                                                               \label{1.26.3.24}
\leq NE|u_0|^p_{W^1_p}
+NE\int_0^{\tau}\int_{\bR^{2d}}e^{-p\Psi_t}\frh(\eta)I_t\,dxd\eta\,dt
+NE\frK( T ) 
\end{equation}
 with a constant $N=N(d,p,\delta)$, where  
$$
\frK( T ):=\Big(\int_0^{ T }| f_{t}e^{-\psi_{t} }|^{2}_{L_{p}}
+|h _t e^{-\psi_{s}}|^{2}_{L_{p} }\,dt\Big)^{p/2}
+\Big(\int_{0}^{ T }  
|g_{t} e^{- \psi_{t}}| _{L_{p}}
\,dt\Big)^{p}.   
$$ 
In addition,
by taking into account  \eqref{4.24.5.24}, we get
$$
\bG(\tau):=E\sup_{t\leq \tau}|e^{-p\Psi_t}u_t|^p_{W^1_p}
$$
$$
+E\int_0^{\tau}\int_{\bR^d}e^{-p\Psi_s}(|Du_s|^{p-2}|D^2u_s|^2
+\Lambda_s|Du_s|^p)dx\,ds
$$
$$
\leq N E|u_0|_{W^1_p}^p
+NE\frK( T )
$$
 \begin{equation}                                                             
                                                                         \label{1.27.3.24a}
+NE\int_0^{\tau}e^{-p\Psi_s}\int_{\bR^{2d}}\frh(\eta)I_s\,dxd\eta\,ds
+NE\frJ(\tau)
\end{equation}
 with $N=N(d,p,\delta)$,  where
$$
\frJ(\tau):=\sup_{t\leq\tau} \Big|\int_0^{t}e^{-p\Psi_s}\int_{\bR^{2d}}
 \frh(\eta)J^k_s\,dx d\eta\,dw^k_s\Big|. 
$$
By the Davis inequality 
$$
E\frJ(\tau)
\leq 3E\Big(\int_0^{\tau} e^{-2p\Psi_s}\sum_{k}\Big|\int_{\bR^{2d}}
\frh(\eta)J^k_s\,dxd\eta\Big|^2 ds\Big)^{1/2}.
$$
Noting that 
$$
J^k_s=\theta_s|v_s|^{p-1}(M_s^kv_s+\sigma^{ik}_{s(\eta)}D_iu_s
+\nu_{s(\eta)}^k+h_{s(\eta)}^k)
$$
$$
\leq |v_s|^{p/2}\big(\,|v_s|^{p/2-1}
|M_s^kv_s+\sigma^{ik}_{s(\eta)}D_iu_s
+\nu_{s(\eta)}^k+h_{s(\eta)}^k|\big), 
$$
by Cauchy-Schwarz-Bunjakovsky we get 
$$
\sum_{k}\Big|\int_{\bR^{2d}}
 \frh(\eta)J^k_s\,dxd\eta\Big|^2
 \leq N|v_s|_{L_p}^p\int_{\bR^{2d}}\frh(\eta)J_s\,dxd\eta.
$$
Thus, with the constant $N$ from  \eqref{1.27.3.24a}, we have 
\begin{align}
NE\frJ(\tau)\leq & 
(1/4)E\sup_{t\leq\tau}|e^{-p\Psi_s}_tu_{t}|^p_{W^1_p}                     \nonumber\\ 
&+NE\int_0^{\tau}e^{-p\Psi_s}\int_{\bR^{2d}}\frh(\eta)J_s\,dxd\eta\,ds  \label{1.24.9.24}
\end{align}
with a constant 
$N=N(d,p, \delta)$.  
By \eqref{1.25.3.24}, \eqref{2.25.3.24} 
and \eqref{3.25.3.24}, 
taking into account  \eqref{6.27.3.24}-\eqref{7.27.3.24}, 
from \eqref{5.23.3.24} we get 
$$
J_s\prec_{\kappa}|\eta|^p\big(N|Du_s|^{p-2}|D^2u_s|^2
+ N'\gamma''_s |Du_s|^p
+ N'(1+\overline{D\nu_s}^2) |u_s|^p\big)+NI_s,  
$$
 with 
$\gamma''=1+\bar\nu^2+\overline{D\sigma}^2+\overline{D\nu}^2$, constant  
$N'=N'(d,p,\delta,r,\rho_0)$ and constant $N$, 
which due to \eqref{1.23.9.24} depends only on $d,p,\delta$.
Consequently, 
$$
E\int_0^{\tau }e^{-p\Psi_s}\int_{\bR^{2d}}\frh(\eta)J_s\,dxd\eta\,ds
$$
$$
\leq
 E\int_0^{\tau}e^{-p\Psi_s}\int_{\bR^{d}}
( N |Du_s|^{p-2}|D^2u_s|^2
+ N'\gamma''_s |Du_s|^p
+ N'(1+\overline{D\nu_s}^2) |u_s|^p)dx\,ds
$$
$$
+NE\int_0^{\tau}\int_{\bR^{2d}}e^{-p\Psi_t}\frh(\eta)I_t\,dxd\eta\,dt  
$$
 with constants $N=N(d,p,\delta)$ 
and $N'=N'(d,p,\delta,r,\rho_0)$. 
Hence, taking $C\geq N'$ in the definition of $\Lambda$, and  taking 
$\lambda\geq C\geq N'$ and $\mu\geq N'\overline{D\nu}^2$ 
in the definition of $\alpha$ 
in \eqref{1.22.9.24}, by virtue of \eqref{1.26.3.24} 
and Theorem \eqref{theorem p} 
we get  
$$
E\int_0^{\tau }e^{-p\Psi_s}\int_{\bR^{2d}}\frh(\eta)J_s\,dxd\eta\,ds
$$
\begin{equation}      
                                                                      \label{2.24.9.24}
\leq NE|u_0|^p_{W^1_p}
+N
E\int_0^{\tau}\int_{\bR^{2d}}e^{-p\Psi_t}\frh(\eta)I_t\,dxd\eta\,dt                                                  
+NE\frK(T).
\end{equation}
 
Thus from \eqref{1.27.3.24a}, taking into account 
\eqref{1.24.9.24} and \eqref{2.24.9.24} we obtain 
$$
\bG(\tau)\leq NE|Du_0|^p_{L_p}
+(1/4)E\sup_{t\leq\tau }|e^{-\Psi_s}u_t|^p_{L_p}
+NE\frK( T  )
$$
\begin{equation}
                                                                           \label{9.27.3.24}
+NE\int_0^{\tau}\int_{\bR^{2d}}e^{-p\Psi_t}\frh(\eta)I_t\,dxd\eta\,dt                                                  
\end{equation}
with a constant $N=N(d,p, \delta)$. 
Here for the last term we have 
$$
NE\int_0^{\tau}\int_{\bR^{2d}}e^{-p\Psi_t}\frh(\eta)I_t\,dxd\eta\,dt
$$
$$
\leq N'E\int_0^{\tau}
e^{-p\Psi_s}
|v_s|^{p-2}_{L_p}|f_s|^2_{L_p}
+|v_s|_{l_p}^{p-1}|Dg_s|_{L_p}+|v_s|^{p-2}_{L_p}
|Dh_s|^2_{L_p}\,ds 
$$
$$
+(1/4)E\sup_{t\leq\tau}|e^{-\Psi_t} v_s |^{p}_{L_p}
+N''E
\Big(
\int_0^{\tau}(|e^{-\Psi_s}f_s|^2_{L_p}
+|e^{-\Psi_t}Dh_s|_{L_p}^2)\,ds
\Big)^{p/2}
$$
$$
+N''\Big(\int_0^{\tau}|e^{-\Psi_s}Dg_s|_{L_p}\,ds\Big)^p 
$$
 with constants $N'$ and $N''$ depending 
only on $d$, $p$ and $\delta$.  Consequently, from \eqref{9.27.3.24} we obtain 
$$
\bG(\tau)\leq (1/2)E\sup_{t\leq\tau}|e^{-\psi_t}|u_{t}|^p_{W^1_p}
+NE|u_0|^p_{W^1_p}+N\bK( T )
$$
with a constant $N=N(d,p, \delta)$, 
where 
$$
\bK( T )=E\Big(\int_0^{ T }
|e^{-\Psi_s}g_s|_{W^1_p}\,ds
\Big)^{p}
+E\Big(
\int_0^{ T }
(|e^{-\Psi_t}f_s|^2_{L_p}+|e^{-\Psi_s}h_s|_{W^1_p}^2)\,ds
\Big)^{p/2}. 
$$
Taking here 
$$
\tau_n=\inf\{t\in[0,T]:|u_t|_{W^2_p}\geq n\}
$$
in place of $\tau$,  we get estimate \eqref{estimate Dp} with $\tau_n$ 
in place of $T$, which implies \eqref{estimate Dp} 
by Fatou's lemma as $n\to\infty$. 

Now we dispense with the additional assumptions by approximating 
\eqref{eq1}--\eqref{eq2} by \eqref{eqe}, 
defined in the proof of Theorem \ref{theorem p}. Then due to 
what we have proved above, for the solution $u^{\varepsilon}$ 
of we have \eqref{eqe} we have  
$$
E\sup_{t\leq T}|e^{-\Psi_t}u^{\varepsilon}_t|^p_{W^1_p}
+E\int_0^{T}
e^{-p\Psi_t}(\big||Du^{\varepsilon}_t
|^{p/2-1} D^2u^{\varepsilon}_t \big|^2_{L_2}
+\Lambda_t|u^{\varepsilon}_t|^p_{W^1_p})\,dt$$
\begin{equation*}
                                                                   \label{estimate 3}
\leq NE|u_0|^p_{W^1_p}+
N\bK(T)
\end{equation*}
with a constant $N=N(d,p,r,\delta)$.

Naturally, we are going to apply Theorem \ref{theorem 4.22.1}
to prove that, for any sequence $\varepsilon_{n}\downarrow 0$,
\begin{equation}
                                                                            \label{7.9.6.240}
\lim_{n\to\infty}E\sup_{t\in[0,T]}
|u^{\varepsilon_n}_t-u_t|^2_{W^{1}_{2}}
+\lim_{n\to\infty}
E \int_0^T|D^{2}u^{\varepsilon_n}_t-D^{2}u_t|^2_{L_2}\,dt=0.                                                                  
\end{equation}
 To this end we set 
$H=W^1_2$, $V=W^2_2$, and cast \eqref{eq1}--\eqref{eq2} 
into the stochastic evolution equation \eqref{4.28.1} as in the proof 
of Theorem \ref{theorem 2.5.24}, 
see \eqref{6.16.6.24} through \eqref{10.16.6.24}. 
In the same way, we cast \eqref{eqe} 
with $\varepsilon=\varepsilon_n$ for 
a sequence $\varepsilon_n
\to0$ into  the stochastic 
evolution equations  
$$
dv^n_{t}=\big[ A^n_{t}u_{t} +\sca^{n\ast}_{t}v^n_{t}
+\scc^n_{t}v^n_{t}
+f^{*n}_{t} + f^n_{t}+g^n_{t} \big]\,dt
$$
\begin{equation*}
+\big( B^{n k}_{t}v^n_{t}+\scb^{n k}_{t}v^n_{t} 
+h^{n k}_{t}\big) \,dw^{k}_{t},\quad
t\leq T,\quad v^n_{t}\big|_{t=0}=u_{0}^{(\varepsilon_n)}
\end{equation*}
for integers $n\geq1$. Then we check the conditions
of Theorem \ref{theorem 4.22.1}. 
Using well-known properties of mollifications 
and taking into account \eqref{1.1.4.24} we can easily check that 
parts (i), (ii) and (iii) of Assumption \ref{assumption 4.22.1} hold. 
To verify part (iv) of this assumption,  first we deal with
\eqref{5.10.6.24} and
let $v\in C_0^{\infty}$ be vanishing outside of a ball $B_R$ 
of radius $R$. Then for the unit ball $B_V$ in $V=W^2_2$
$$
\sup_{\phi\in B_V}
\big|\big((1-\Delta)\phi,(b^{Mi}_{t}
-b^{M(\varepsilon)i})D_{i}v\big)\big| 
\leq  \sup_{\bR^d}|Dv|
\big|{\bf1}_{B_R}(b^{M }_t-b_t^{M(\varepsilon) })\big|_{L_{2}}, 
$$
$$
\sup_{\phi\in B_V}
\big|\big((1-\Delta)\phi,(c^{M}_{t}
-c^{M(\varepsilon) }_{t}) v\big)\big| 
\leq \sup_{\bR^d}|v|
\big|{\bf1}_{B_R}(c^{M}_{t}
-c^{M(\varepsilon) }_{t})\big|_{L_{2}}, 
$$
$$
\sup_{\phi\in B_V}
\big|(D_i\phi, (a^{ij}_t-a^{(\varepsilon)ij}_t)D_jv )
+(D_kD_i\phi,((D_ka)^{Mij}_t-
(D_ka)^{M(\varepsilon)ij}_t)D_jv)
$$
$$
+(D_kD_i \phi,
(a^{ij}_t-a^{(\varepsilon)ij}_t)D_kD_jv)\big|
$$
$$
\leq
N\sup_{\bR^{d}}(|Dv|+|D^{2}v|)\big(
\big|{\bf1}_{B_R}(a_{t}
-a^{(\varepsilon) }_{t})\big|_{L_{2}}
+\big|{\bf1}_{B_R}((Da)^{M}_{t}
-(Da)^{M(\varepsilon) }_{t})\big|_{L_{2}}\big).
$$
This yields that for any $t$ we have
$| (A^{n}_{t}-A _{t})v 
 | ^{2}_{V^{*}}\to0$ as $n\to\infty$.
 Quite similarly one proves that
 $| (B^{n}_{t}-B _{t})v  | ^{2}_{\ell_{2}(H)}\to0$ as
  $n\to\infty$ and this gives us \eqref{5.10.6.24}.

To check \eqref{4.23.1} observe that
$$
|\sca^{*}_{t}v-\sca^{n*}_{t}v|^{2}_{V^{*}}
\leq N\big(|\,|b_{t}^{B}-b_{t}^{B\varepsilon_{n}}|Dv|^{2}_{L_{2}}+
|\,|c_{t}^{B}-c_{t}^{B\varepsilon_{n}}|v|^{2}_{L_{2}}
$$ 
$$
+|\,|(D a)^{B }_t-(D a)^{B\varepsilon_{n}}_t|Dv|^{2}_{L_{2}}.
$$
This implies the second equality in \eqref{4.23.1}
in the same way as after \eqref{7.10.6.24}. 
Similarly the remaining relations in \eqref{4.23.1}
are checked. This leads to \eqref{7.9.6.240}.

After that it only remains to reproduce
the end of the proof of Theorem \ref{theorem p}
with obvious changes. The theorem is proved.

\begin{remark}
By an appropriate (minor) change of the above proof we can get   
that the above theorem remains valid if in Assumption \ref{assumption Dp8}
we suppose that $c$ and $Dc$ are 1/2-admissible, instead of assuming 
that $c$ is admissible. 
\end{remark}


\begin{thebibliography}{mm}

 
\bibitem{AV_20}E. Al\`os and F. Viens, 
Stochastic heat equation with white-noise drift, 
{\em Ann. Inst. H. Poincaré Probab. 
Statist.\/,} 36(2000), no.2, 181--218. 
 
\bibitem{ABLM_24}
S. Athreya, O. Butkovsky, K. L\^e, L. Mytnik, 
Well-posedness of stochastic heat equation with distributional drift 
and skew stochastic heat equation, 
{\em Comm. Pure Appl. Math.\/,}77 (2024), no.5, 2708--2777.
 
 
\bibitem{AM_03} S. Assing and R. Manthey, 
Invariant measures for stochastic heat equations with unbounded coefficients, 
{\em Stochastic Process. Appl.\/,} 103 (2003), 
no.2, 237--256.

\bibitem{BFGM_19} L. Beck, F. Flandoli, M. Gubinelli, and M. Maurelli,
{\em Stochastic ODEs and stochastic linear PDEs with
critical drift: regularity, duality and uniqueness\/},
Electron. J. Probab., Vol. 24 
(2019), No. 136, 1--72.

\bibitem{B_22} N. Berglund, An Introduction to Singular Stochastic PDEs, 
EMS Series of lectures, EMS Press 2022. 
 
\bibitem{BDG_23}O. Butkovsky, K Dareiotis and M. Gerencs\'er, 
Optimal rate of convergence for approximations of SPDEs with non-regular drift, 
{\em SIAM Journal on Numerical Analysis\/,} 61 (2023), 
no. 2, 1103--1137.
 
\bibitem{DZ_14} G. Da Prato  and J. Zabczyk, 
``Stochastic equations in infinite dimensions'',
Second edition, Encyclopedia of Mathematics 
and its Applications, 152.
 Cambridge University Press, Cambridge, 2014. xviii+493 pp. 
 
 
 \bibitem{DD_09} H. Dong and D. Du, The Navier-Stokes Equations in the Critical 
 Lebesgue Space, {\em Commun. Math. Phys.\/}, 
 292 (2009), 811-827. 
 
 
\bibitem{G_24} L. Galeati, A note on weak existence for singular SDEs, 
 arXiv:2309.06295v2 [math.PR] 15 Jun 2024. 
 
  
 \bibitem{GS_24} M. Gerencs\'er and H. Singh, 
Strong convergence of parabolic rate 1 of discretisations 
of stochastic Allen-Cahn-type equations, 
{\em Trans. Amer. Math. Soc.\/,} 377 (2024), no. 3, 1851--1881.


\bibitem{GK_92}
I. Gy\"ongy and N. V. Krylov, 
Stochastic partial differential equations with unbounded coefficients and applications. III, 
{\em Stochastics Stochastics Rep. \/,} 40 (1992), no. 1-2, 77--115.

 \bibitem{Ki_23} D. Kinzebulatov,
{\em Parabolic equations and SDEs with 
time-inhomogeneous Morrey drift\/}, arXiv:2301.13805v1.

\bibitem{KR_77}N.V. Krylov and B.L. Rozovskii, On the Cauchy problem 
 for linear stochastic partial differential equations, 
 {\em Math. USSR Izvestija} Vol 11 (1977), No. 6, 1267--1284.
 
 \bibitem{KR_79} N.V Krylov and B.L. Rozovskii,  
 {\em  Stochastic evolution
equations},  ``Itogy nauki i
tekhniki'',  Vol. 14, VINITI, Moscow, 1979, 71-146 in Russian,
English translation in J. Soviet Math., Vol. 16 (1981), 
No. 4, 1233--1277 and in Stochastic differential 
equations: theory and applications, 1--69, Interdiscip. 
Math. Sci., 2, World Sci. Publ., Hackensack, NJ, 2007.

\bibitem{Kr_13} N.V. Krylov,
A relatively short proof
of It\^o's formula for  SPDEs and its applications,
{\em Stochastic Partial Differential Equations: Analysis and
  Computations\/}, (2013) 1:152--174.
  
\bibitem{K_10} N.V. Krylov, It\^o's formula for the $L_p$-norm 
of stochastic $W_ p^1$-valued processes, 
{\em Probab.  Theory Relat. Fields} 147 (2010), 583--605. 
  
  \bibitem{K2022b} N.V. Krylov, Estimates in $L_p$ 
for solutions to SPDEs with coefficients 
in Morrey classes, {\it Stoch. PDE: Anal. Comp.}, 2022. 

\bibitem{K2023} N.V. Krylov, On parabolic equations 
in Sobolev spaces with lower order coefficients from Morrey spaces, 
arXiv:2311.03238. 


\bibitem{LSU1967} O.A. Ladyzhenskaya, 
V.A. Solonnikov, N.N. Ural’tseva, Linear and quasi-linear
parabolic equations, Nauka, Moscow, 1967 in Russian. English translation: 
American Math. Soc., Providence, 1968. 

 
\bibitem{L_1967}
O. Ladyzhenskaya, On the uniqueness and smoothness of 
generalized solutions to the Navier-Stokes equations. 
{\em Zap. Nauchn. Sem. Leningrad. Otdel. Mat. Inst. Steklov. (LOMI)\/} 
5 (1967), 169-185; English
transl.: Sem. Math. V. A. Steklov Math. 
Inst. Leningrad 5, 60--66. 
 
\bibitem{LR_15} W. Liu and M. R\"ockner, 
Stochastic Partial Differential Equations: An Introduction,
Springer International Publishing Switzerland 2015.

\bibitem{MS_18} C. Marinelli and L. Scarpa, 
On the well-posedness of SPDEs 
with singular drift in divergence form, 
Springer Proc. Math. Stat., 229, Springer, Cham, 2018, 225--235. 

\bibitem{P_1959} 
G. Prodi, Un teorema di unicità per le equazioni di Navier-Stokes,  
{\em Ann. Mat. Pura Appl.\/} 48 (1959), 173--182. 

\bibitem{RZ_20} M. R\"ockner and Guohuan  Zhao, {\em
SDEs with critical time dependent drifts: weak solutions\/},	
https://doi.org/10.48550/arXiv.2012.04161.

\bibitem{RZ_21} M. R\"ockner and G. Zhao, {\em
SDEs with critical time dependent drifts: strong solutions\/},
arXiv:2103.05803. 

 
\bibitem{S_1962} J. Serrin, On the interior regularity 
of weak solutions of Navier-Stokes equations, 
{\em Arch. Rat. Mech. Anal.\/} 9 (1962), 187--195. 
 

\end{thebibliography}
\end{document}